\newcommand{\R}{\mathbb{R}}
\newcommand{\C}{\mathbb{C}}
\newcommand{\N}{\mathbb{N}}
\newcommand{\Z}{\mathbb{Z}}
\newcommand{\E}{\mathcal{E}}
\renewcommand{\H}{\mathcal{H}}
\newcommand{\eg}{\textit{e.g. }}
\newcommand{\cf}{\textit{cf. }}
\newcommand{\ie}{\textit{i.e. }}
\newcommand{\mrm}[1]{\mathrm{#1}}
\newcommand{\co}{\colon}
\newcommand{\abs}[1]{\lvert#1\rvert}
\newcommand{\aaabs}[1]{\left\lvert #1\right\rvert}
\newcommand{\norm}[1]{\lVert#1\rVert}
\DeclareMathOperator{\ot}{\otimes}
\DeclareMathOperator{\op}{\oplus}
\newcommand{\Dom}{\mathop{\mrm{dom}}}
\newcommand{\Ker}{\mathop{\mrm{ker}}}
\newcommand{\re}{\mathop{\mrm{Re}}}
\newcommand{\img}{\mathop{\mrm{Im}}}
\newcommand{\spn}{\mathop{\mrm{span}}}
\newcommand{\Arg}{\mathop{\mrm{Arg}}}
\newcommand{\Res}{\mathop{\mrm{Res}}}
\newcommand{\resolv}{\mathop{\mrm{res}}}
\newcommand{\sing}{\mathop{\mrm{spec_{s}}}}
\newcommand{\HS}{\H_{\mrm{s}}}
\newcommand{\HP}{\H_{\mrm{p}}}
\newcommand{\HSC}{\H_{\mrm{sc}}}
\newcommand{\HC}{\H_{\mrm{c}}}
\newcommand{\HA}{\H_{\mrm{ac}}}
\theoremstyle{plain}
\newtheorem{thm}{Theorem}[section]
\newtheorem{lem}[thm]{Lemma}
\newtheorem{prop}[thm]{Proposition}
\theoremstyle{definition}
\newtheorem{rem}[thm]{Remark}
\numberwithin{equation}{section}
\begin{document}
\title[Spectrum of a family of spin-orbit coupled Hamiltonians]{Spectrum of a family 
of spin-orbit coupled Hamiltonians with singular perturbation}
\author{R.~Jur\v{s}\.{e}nas}
\address{Vilnius University, Institute of Theoretical Physics and Astronomy,  
A. Go\v{s}tauto 12, Vilnius 01108, Lithuania}
\email{Rytis.Jursenas@tfai.vu.lt}
\subjclass[2010]{47A55, 81Q10, 81Q15}
\date{\today}
\keywords{Rashba--Dresselhaus spin-orbit coupling, three-dimensional ultracold atom,
singular finite rank perturbation, self-adjoint extension, Krein Q-function}
\begin{abstract}
The present study is the first such attempt to examine rigorously and comprehensively the spectral properties 
of a three-dimensional ultracold atom when both the spin-orbit interaction and the Zeeman field are taken into 
account. The model operator is the Rashba spin-orbit coupled operator in dimension three. The self-adjoint extensions
are constructed using the theory of singular perturbations, where regularized rank two perturbations 
describe spin-dependent contact interactions. The spectrum of self-adjoint extensions
is investigated in detail laying emphasis on the effects due to spin-orbit coupling. When the spin-orbit-coupling
strength is small enough, the asymptotics of eigenvalues is obtained. The conditions for the existence of
eigenvalues above the threshold are discussed in particular. 
\end{abstract}
\maketitle
\section{Introduction}
The spin-orbit coupled quantum systems are described by the Hamiltonian
which is realized as the differential operator in the tensor product space
$\R^{3}\ot\C^{2}$:
\begin{equation}
S:=-\Delta \ot I
+i\alpha (\nabla_{1}\ot\sigma_{2}-\nabla_{2}\ot\sigma_{1})
+\beta I\ot \sigma_{3} \quad
(\alpha,\beta\geq0). 
\label{eq:DiffOp}
\end{equation}
In (\ref{eq:DiffOp}), $\alpha$ stands for the spin-orbit-coupling strength,
$\beta$ is the strength of the magnetic Zeeman field \cite{Dalibard11}.
The symbol $\Delta$ denotes the three-dimensional Laplace operator,
$\nabla_{j}$ ($j=1,2$) is the gradient in the $j$th component of a
three-dimensional position-vector. The standard Pauli matrices are 
represented by $(\sigma_{j})_{j=1,2,3}$. The imaginary unit
$i\equiv\sqrt{-1}$. Here and elsewhere we write $I$ for an identity operator. 
The space in which $I$ acts is expected to be understood from the context.
\subsection{Main goal}
The main purpose of the current exposition is to study the spectrum
of self-adjoint extensions of the symmetric operator constructed from
(\ref{eq:DiffOp}). Let $S$ be defined on a set of compactly 
supported smooth functions, with the support outside the origin $0\in\R^{3}$:
\begin{equation}
\Dom S=C_{0}^{\infty}(\R^{3}\backslash\{0\})\ot\C^{2}.
\label{eq:DiffOp1}
\end{equation}
Then $S$ is symmetric with respect to the scalar product in the Hilbert tensor product $L^{2}(\R^{3})\ot\C^{2}$. 
In applications, the self-adjoint extensions of so defined $S$ are referred to as the Rashba spin-orbit coupled 
Hamiltonians considered in the presence of the \textit{out-of-plane} magnetic field, with the impurity scattering 
treated via the spin-dependent contact interaction. A good review on various types of disorder in condensed-matter 
systems is given in \cite{Modugno10}. For the analysis of long-range interactions in electronic systems, as opposed 
to the present discussion where the zero-range interactions are studied, the reader may refer to 
\cite{Chesi11,Ambrosetti09}.
\subsection{Special cases}
The special case $\alpha=0$ was discussed recently in \cite{Cacciapuoti09,Cacciapuoti07}, where,
among other things, the authors examined the effects due to the \textit{in-plane} magnetic field.
The corresponding term in (\ref{eq:DiffOp}) is then replaced by a unitarily equivalent one $\beta I\ot\sigma_{1}$.
In their Theorem~2 in \cite{Cacciapuoti09} the authors showed under what conditions the point spectrum is empty. 
Here we prove that under exactly the same conditions there is a subfamily of self-adjoint extensions whose point
spectrum is $\{\pm\beta\}$.

The case $\alpha=\beta=0$ is a classic one. The book \cite{Albeverio05} is much
used as a reference book for a standard (or von Neumann) operator extension theory; see also an extensive list of
references therein. The extensions constructed using spaces of boundary values are discussed, for example, in
\cite{Albeverio13,Malamud12,Mikhailets99,Derkach91}. A modern and comprehensive review of various operator 
techniques is given in \cite{Bruning08}. 

In one and two spatial dimensions, the spectral properties of self-adjoint extensions of $S$, for $\alpha,\beta\geq0$, 
were studied in \cite{Jursenas13,Carlone11,Cacciapuoti09,Cacciapuoti07,Bruning07,Exner07}. A mean field 
interpretation, which is commonly accepted in physics literature, can be found in \cite{Kim15,Hu13,Liao13,Liu13}.
\subsection{Motivation and main tools}
A recently proposed technique \cite{Anderson13} (see also \cite{Wang12,Cheuk12,Campbell11}) for producing the 
Rashba-type spin-orbit coupling for a three-dimensional ultracold atom serves as our main motive to examine a general 
case $\alpha,\beta \geq0$. Many more motivating aspects concerning the dynamics of quantum particles can be found in 
\cite{Cacciapuoti09,Cacciapuoti07,Cacciapuoti10}, and we do not repeat them here. Contrary to \cite{Cacciapuoti09}, 
where the modified Krein resolvent formula \cite{Albeverio05-1} has been used as a starting point for calculating the 
resolvent of $S$ (with $\alpha=0$), here, we exploit the theory of singular finite rank perturbations 
\cite{Simon05,Dijksma05,Kurasov03,Albeverio02,Albeverio00,Albeverio97}. On the one hand, the two approaches lead 
to an identical description of the boundary conditions that ensure self-adjointness \cite{Albeverio07}. On the other 
hand, the singularities of Green function (see \eg \cite{Albeverio10} and references therein) are successfully eliminated 
using the so-called renormalization procedure \cite{Kurasov03}. Such a procedure is implemented in the theory of 
singular perturbations in its most natural way \cite{Albeverio00}. 

As an illustration, let us consider a formal "operator" $S+C\delta$, where $S$ in (\ref{eq:DiffOp}) is written in matrix 
representation, and where $\delta$ is the Dirac distribution concentrated at the origin; for $\alpha=\beta=0$, this type 
of operator, when restricted to one spatial dimension but extended to the many-body case, is studied in 
\cite{Albeverio01}. The matrix $C$, which is Hermitian and invertible, plays the role of the coupling parameter of 
contact interaction. The Lipmann--Schwinger equation for the perturbed operator shows that the solution of the 
integral equation is inconsistent, in that the Green function for $S$ is singular at $x=0$ (see (\ref{eq:frg}) for the 
details), meaning that the equation does not have a unique solution. The phenomenon is usually called the ultraviolet 
divergence. For, the Fourier integral of the Green function in the momentum representation diverges when $x=0$. To 
avoid divergences, typically one introduces an auxiliary UV momentum cut-off and regularizes the interaction 
parameter. The details can be found in \cite[Sec.~3.7]{Braaten08}, see also 
\cite{Takei12,Vyasan11a,Vyasan11,Ozawa11}.
In the theory of singular perturbations, however, one deals with a regularization and a renormalization 
procedure applied to $\delta$. In this case the regularization is meant in the sense that the Dirac distribution is 
extended to some subspace of a Hilbert space. Roughly speaking the action of $\delta$ on the function which is 
undefined at the origin is replaced by the action of some regularized $\delta$. The present discussion is justified 
rigorously in Sec.~\ref{sec:self}. Next, the regularized and renormalized $\delta$ is determined by the choice of an 
Hermitian matrix $R$, which is usually called \textit{admissible}; the matrix realization of $R$ is due to an extra 
$\C^{2}$ in (\ref{eq:DiffOp}). It turns out that the theory also deals with an additional parameter (\ie the matrix $R$). 
However, the precise operator realization of $S+C\delta$ by extending $\delta$ gives one an advantage over the cut-off 
practice in an obvious way. One shows that the self-adjoint realizations of $S+C\delta$ are parametrized in terms of 
$\Gamma:=-C^{-1}-R$, with the exception of the trivial extension corresponding to $C=0$ and the Friedrichs extension 
corresponding to $C^{-1}=0$. For a general $\alpha,\beta\geq0$, the functional $\delta$ is of the class for which $R$ is 
not unique, but in some cases $R$ can be found exactly; \eg $R=-(4\sqrt{2}\pi)^{-1}\mathbb{I}$ for $\alpha=\beta=0$.  
For some special classes of singular finite rank perturbations, the uniqueness of $R$ is discussed in great
detail in \cite{Hassi09}.

Another reason for choosing the approach of singular perturbations is that the scattering theory for singular finite rank 
perturbations is fully established \cite[Chap.~4]{Albeverio00}: Using our results, the scattering matrix can 
be found directly from \cite[Eq.~(4.34)]{Albeverio00}. For this particular reason we do not 
examine scattering states but rather concentrate on analytic properties of the spectrum.
\subsection{Solvability of the model}
To the best of our knowledge, the model studied in the current paper is investigated by means of the theory of singular 
perturbations for the first time. The lack of rigorous results concerning the spectral analysis of the perturbed $S$ may 
be explained by the complexity of the Green function for the free Hamiltonian (or else the trivial extension of $S$). 
A typical strategy is to write the Green function in the momentum representation. Then the dispersion relation and the 
essential spectrum are easy to deduce. However, the computation of the inverse Fourier transform of Green function is a 
rather troublesome task. Recently \cite{Jursenas14}, a hypergeometric series representation for the Green function in 
the coordinate representation has been derived. In contrast, the analogous result in two spatial dimensions possesses a 
closed form \cite{Bruning07,Carlone11,Exner07}.
\small
\begin{figure}
\centering
\begin{subfigure}[b]{0.35\textwidth}
\includegraphics[width=\textwidth]{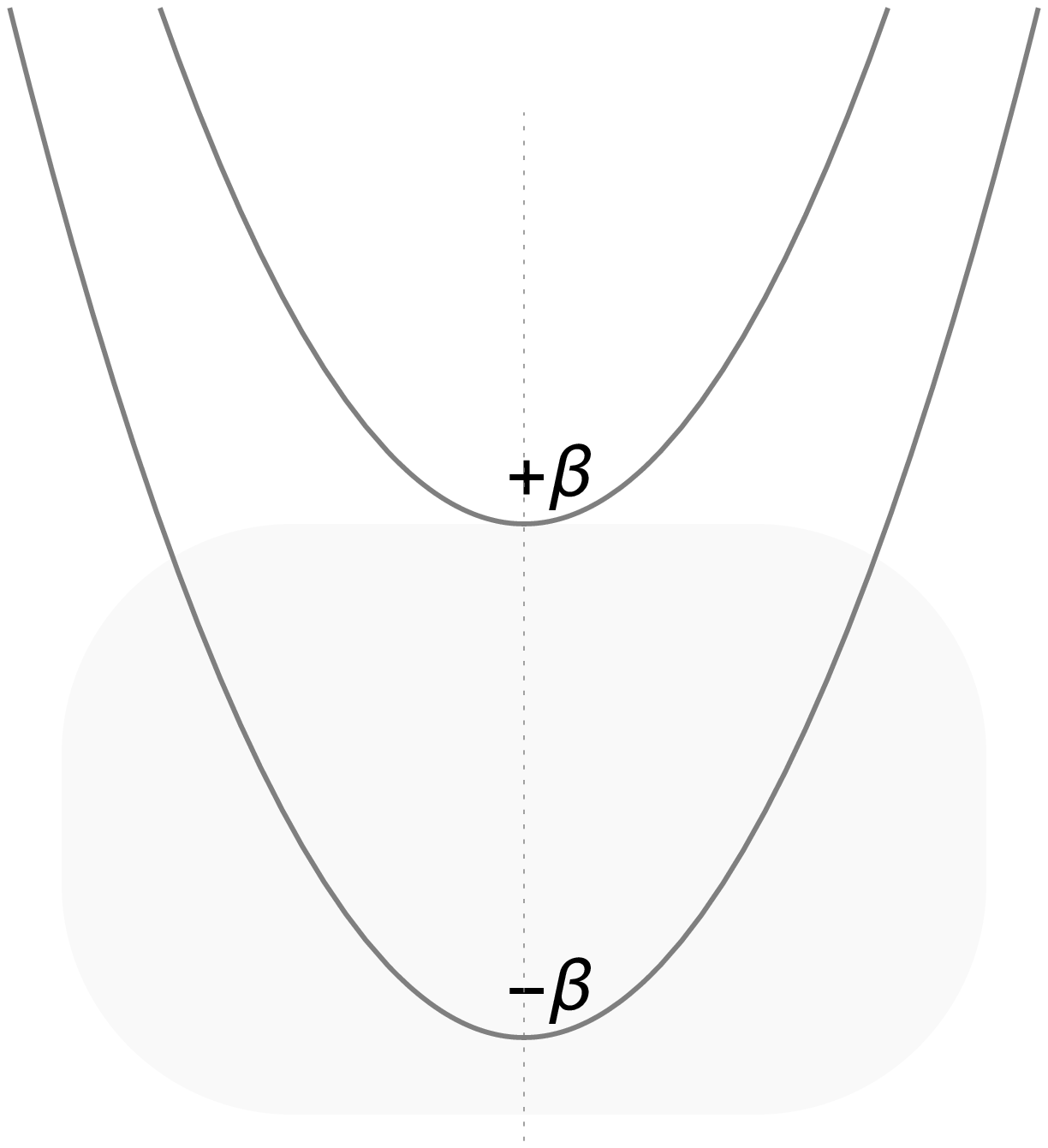}
\caption{$\alpha=0$, $\Sigma=\beta\geq0$}
\label{fig:A}
\end{subfigure}\hfill
\begin{subfigure}[b]{0.35\textwidth}
\includegraphics[width=\textwidth]{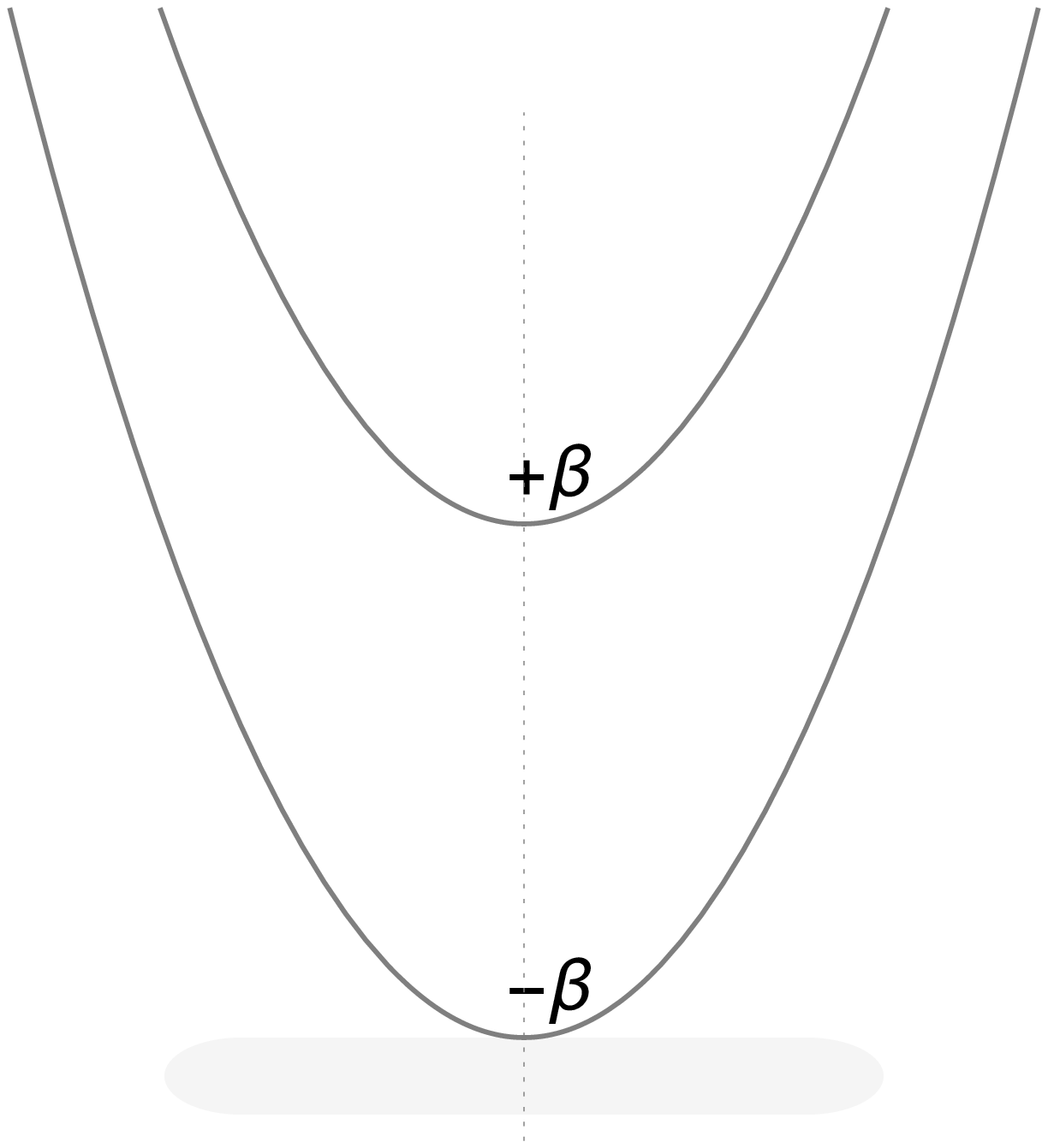}
\caption{$0<\alpha<\sqrt{2\beta}$, $\Sigma=\beta>0$}
\label{fig:B}
\end{subfigure}\hfill
\begin{subfigure}[b]{0.35\textwidth}
\includegraphics[width=\textwidth]{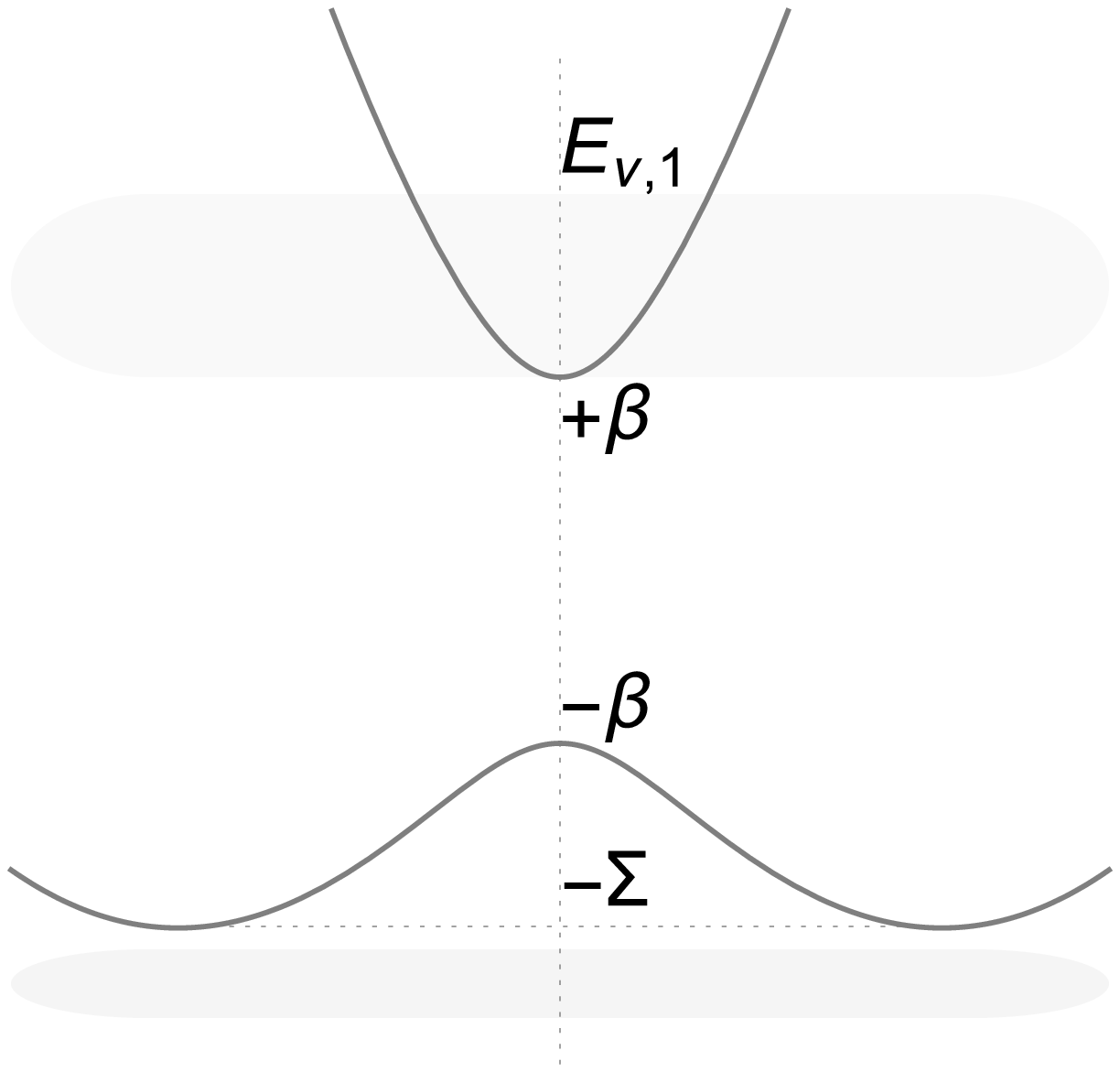}
\caption{$0<\sqrt{2\beta}\leq\alpha$, $\beta\leq\Sigma\leq1$}
\label{fig:C}
\end{subfigure}
\caption{Schematic of projected dispersion relations. Permitted eigenvalues are situated along
the vertical axis within the gray area. For example, (b) shows that the point spectrum is empty
above $-\beta$. The details are listed in Theorems~\ref{thm:without}, \ref{thm:existence}, \ref{thm:existence3}.}
\label{fig:ABC}
\end{figure}
\normalsize

The absolute convergence of hypergeometric series causes the limitation on the parameters $\alpha,\beta\geq0$. 
The result is that the infimum of the continuous spectrum $[-\Sigma,\infty)$ must satisfy the condition 
$-\Sigma\geq-1$. 
Here $\Sigma$ equals $\beta$ if $\beta>\alpha^{2}/2$ and $(\beta/\alpha)^{2}+(\alpha/2)^{2}$ otherwise. 

For $\alpha$ arbitrarily small or zero, the restriction can be removed by analytic continuation.
Using the resolvent formula for a perturbed Hamiltonian (or else nontrivial self-adjoint extension of $S$),
we find that the eigenvalues solve the transcendental equation. We then derive the solutions analytically 
with the accuracy up to $O(\alpha^{4})$. In this particular case we observe a tendency that the spin-orbit
term moves the eigenvalues down from the threshold $-\beta$. In general, the main conclusions following from
our results that concern eigenvalues can be schematically described as shown in Fig.~\ref{fig:ABC}: 
For $\alpha>0$ small, there are no eigenvalues above the threshold $-\beta$ no matter the form of a nonzero coupling 
parameter of contact interaction ($C$). For $\alpha$ large, there exists a subfamily of Hamiltonians having 
eigenvalues in $[\beta,E_{\nu,1}]$, for some lucid $E_{\nu,1}\geq\beta$, but none of Hamiltonians has
eigenvalues in $[-\Sigma,\beta)$.
\section{Self-adjoint extensions}\label{sec:self}
Let $H$ be a lower semi-bounded self-adjoint operator in a separable Hilbert space $\H_{0}$.
Let $(\H_{n})_{n\in\Z}$ be the scale of Hilbert spaces associated to $H$. The scalar product and the induced norm
in $\H_{n}$ are denoted by $\braket{\cdot,\cdot}_{n}$ and $\norm{\cdot}_{n}$, respectively.
Consider the orthonormal system $(\psi_{s})_{s=\pm}\in \H_{-2}\backslash\H_{-1}$ and define
the duality pairing $\braket{\cdot,\cdot}\co \H_{-n}\times\H_{n}\to\C$, for $n\in\N_{0}$, in a usual way 
\cite[Sec.~1.2.2]{Albeverio00}. The orthogonality, which is assumed with respect to the scalar product 
$\braket{\cdot,\cdot}_{-2}^{*}:=\braket{(H-i)^{-1}\cdot,(H-i)^{-1}\cdot}_{0}$ in $\H_{-2}$, implies the one with
respect to $\braket{\cdot,\cdot}_{-2}$. We write $\psi$ for the column matrix $(\psi_{s})$; $\braket{\psi,\cdot}$ is the
column matrix $(\braket{\psi_{s},\cdot})$. 

Let $H^{0}$ be a symmetric and densely defined restriction of $H$ to the domain
\begin{equation}
\Dom H^{0}=\{f\in \H_{2}\co \braket{\psi,f}=0\}.
\label{eq:H0}
\end{equation}
For $z\in\resolv H$ (the resolvent set of $H$), we define the functions
\begin{equation}
\Phi_{s}(z):=(H-z)^{-1}\psi_{s}
\label{eq:DefEl0}
\end{equation}
and the column matrices $\Phi\equiv(\Phi_{s})$. The functions
$(\Phi_{s}(i))_{s=\pm}$ are orthonormal with respect to the scalar product $\braket{\cdot,\cdot}_{0}$ in $\H_{0}$,
for it holds $\braket{\Phi_{s}(i),\Phi_{s^{\prime}}(i)}_{0}=\braket{\psi_{s},\psi_{s^{\prime}}}_{-2}^{*}$ ($s,s^{\prime}
=\pm$). Let $H^{0\,*}$ be the adjoint of $H^{0}$. Then $\Phi_{s}(z)\in\Ker(H^{0\,*}-z)$ and $H^{0\,*}$ 
has deficiency indices (2,2).
Considering $\Dom H^{0\,*}$ as a Banach space $\H_{2}\dotplus\C^{2}$ with the norm
\[\norm{f}_{\Dom H^{0\,*}}^{2}=\norm{\tilde{f}}_{2}^{2}+\sum_{s}\abs{b_{s}(f)}^{2}\]
($f\in \Dom H^{0\,*}$; $\tilde{f}\in \H_{2}$; $b_{s}(f)\in \C$), we write
\begin{equation}
f=\tilde{f}+\tfrac{1}{2}\sum_{s}b_{s}(f)(\Phi_{s}(i)+\Phi_{s}(-i)).
\label{eq:fDomH*}
\end{equation}
The column matrix $b(f)\equiv(b_{s}(f))$, and we regard $b_{s}(\cdot)$ as a functional
$\Dom H^{0\,*}\to\C$.

Let $\psi_{s}^{R}$ be an extension of $\psi_{s}$ to $\Dom H^{0\,*}$ \cite{Hassi09,Albeverio97}:
\begin{equation}
\braket{\psi^{R},f}=\braket{\psi,\tilde{f}}+Rb(f),\quad
\braket{\psi^{R},\tilde{f}}=\braket{\psi,\tilde{f}}
\label{eq:psiext}
\end{equation}
where $f$ and $\tilde{f}$ are as above, and where $\psi^{R}\equiv(\psi_{s}^{R})$. The matrix 
$R\equiv(R_{ss^{\prime}})\in\C^{2\times2}$ is the admissible matrix for the functionals of 
class $\H_{-2}\backslash\H_{-1}$. Then $\psi_{s}^{R}$ is an element of the topological dual
$(\Dom H^{0\,*})^{\prime}=\H_{-2}\dotplus\C^{2}$ equipped with the supremum norm
$\sup\abs{\braket{\psi_{s}^{R},f}}$; the supremum is taken over $s=\pm$ and 
$f\in\Dom H^{0\,*}$ such that $\norm{f}_{\Dom H^{0\,*}}\leq1$.

Using another decomposition $\Dom H^{0\,*}=\Dom H^{0}\dotplus \spn\{\Phi_{s}(\pm i)\}$, we have that,
for $f\in\Dom H^{0\,*}$,
\begin{equation}
f=f^{0}+\sum_{s}(a_{s}^{+}(f)\Phi_{s}(i)+a_{s}^{-}(f)\Phi_{s}(-i))
\label{eq:fDomH*0}
\end{equation}
($f^{0}\in \Dom H^{0}$; $a_{s}^{\pm}(f)\in\C$). But then, using (\ref{eq:fDomH*}) and (\ref{eq:fDomH*0}),
\begin{equation}
\tilde{f}=f^{0}+\tfrac{1}{2}\sum_{s}ib_{s}^{\prime}(f)\left(\Phi_{s}(i)-\Phi_{s}(-i)\right)
\label{eq:tf}
\end{equation}
($b_{s}^{\prime}(\cdot)$ is regarded as a functional $\Dom H^{0\,*}\to\C$;
the column matrix $b^{\prime}\equiv(b_{s}^{\prime})$) and
\begin{equation}
\begin{pmatrix}
b_{s}(f) \\ b_{s}^{\prime}(f)
\end{pmatrix}=
\begin{pmatrix}
1 & 1 \\ -i & i
\end{pmatrix}
\begin{pmatrix}
a_{s}^{+}(f) \\ a_{s}^{-}(f)
\end{pmatrix}.
\label{eq:bbp}
\end{equation}
Using the normalization
of $(\Phi_{s}(i))_{s=\pm}$, equations~(\ref{eq:fDomH*0}) and (\ref{eq:bbp}), and
\begin{equation}
H^{0\,*}f=H\tilde{f}+\tfrac{1}{2}\sum_{s}ib_{s}(f)
\left(\Phi_{s}(i)-\Phi_{s}(-i)\right)
\label{eq:Hadjf}
\end{equation}
one shows that $(\C^{2},b,b^{\prime})$ is a boundary triple for $H^{0\,*}$.

The self-adjoint extensions $H(\Gamma)$ of $H^{0}$ in $\H_{0}$ are in one-to-one correspondence with
an Hermitian matrix $\Gamma\equiv(\Gamma_{ss^{\prime}})\in \C^{2\times 2}$,
and they are defined as restrictions of $H^{0\,*}$ to the domain \cite{Albeverio00}
\begin{equation}
\Dom H(\Gamma)=\{f\in\Dom H^{0\,*}\co \braket{\psi^{R},f}=-C^{-1}b(f)\}
\label{dom:AK}
\end{equation}
($\Gamma:=-C^{-1}-R$; $C\in \C^{2\times2}$; $\det C\neq0$) provided that $C^{-1}\neq0$. 
When the coupling parameter $C\equiv0$, we set $H(\Gamma)\equiv H$. In view of (\ref{eq:fDomH*}), 
(\ref{eq:Hadjf}), (\ref{dom:AK}), $H(\Gamma)$ is regarded as 
\begin{equation}
H(\Gamma)=H+V, \quad V\co f\mapsto \sum_{s}(C\braket{\psi^{R},f})_{s}\psi_{s}
\quad(f\in \Dom H(\Gamma))
\label{eq:HV}
\end{equation}
so that $V$ is a singular rank two perturbation. 
The resolvent operator of $H(\Gamma)$ is given by
\begin{equation}
(H(\Gamma)-z)^{-1}=(H-z)^{-1}+\sum_{s}((\Gamma-Q(z))^{-1}
\braket{\Phi(\bar{z}),\cdot}_{0})_{s}\Phi_{s}(z)
\label{eq:resolvAK}
\end{equation}
($z\in \resolv H(\Gamma)=\resolv H\cap\{z\in\C\co \det(\Gamma-Q(z))\neq0\}$).
Here $Q\equiv(Q_{ss^{\prime}})\in\C^{2\times2}$ is the Krein $Q$-matrix function
\begin{equation}
Q_{ss^{\prime}}(z):=\Braket{\psi_{s},\frac{I+zH}{H-z}\frac{1}{H^{2}+I}\psi_{s^{\prime}}}
\quad(s,s^{\prime}=\pm).
\label{eq:w3}
\end{equation}

Let us show that (\ref{dom:AK}) does not apply to the case $C^{-1}=0$. Let formally $C^{-1}=0$. Then 
$\braket{\psi^{R},f}=0$ for all $f\in\Dom H(\Gamma=-R)$. But then, since there 
always exists $f\in\Dom H^{0\,*}$ such that $\braket{\psi^{R},f}\neq0$, it follows from the Hahn--Banach theorem that 
$\Dom H(-R)$ is not dense in $\Dom H^{0\,*}$, and hence in $\H_{0}$. This contradicts the fact that $H^{0}$
is densely defined and thus its all self-adjoint extensions are operators.

Let $H_{F}:=H(-R)\vert\Ker b$. Using (\ref{eq:H0}), (\ref{eq:tf}), and the orthogonality of $(\Phi_{s}(i))_{s=\pm}$,
$\braket{\psi,\tilde{f}}=-b^{\prime}(f)$. Using (\ref{eq:psiext}), 
$H_{F}=H$, $\Dom H_{F}=\{f\in \Dom H^{0\,*}\co b(f)=b^{\prime}(f)=0\}$.
\begin{prop}\label{prop:Infty2}
$H_{F}$ is the Friedrichs extension of $H^{0}$.
\end{prop}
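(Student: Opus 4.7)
The plan is to identify $H_F$ with the Friedrichs extension of $H^0$ via the standard form-domain characterization: among all self-adjoint extensions of a lower semi-bounded symmetric operator $H^0$, the Friedrichs extension is the unique one whose operator domain is contained in the form closure $Q(H^0)$, and it is explicitly the restriction of $H^{0\,*}$ to $\Dom H^{0\,*}\cap Q(H^0)$. Since the calculation preceding the proposition already identifies the action of $H_F$ with that of $H$ through (\ref{eq:Hadjf}) at $b(f)=0$, my task reduces to showing that this intersection equals $\H_2=\Dom H$.

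First I identify $Q(H^0)$ with $\H_1=Q(H)$. The inclusion $Q(H^0)\subset\H_1$ is immediate from $H^0\subset H$. For the reverse I must check that $\Dom H^0$, which is the common kernel of the two $\H_2$-continuous functionals $\braket{\psi_\pm,\cdot}$, is $\H_1$-dense in $\H_1$. By a Hahn--Banach argument, this density is equivalent to requiring that neither functional extends continuously from $\H_2$ to $\H_1$; under the identification $(\H_1)^{\prime}\cong\H_{-1}$, this is exactly the standing hypothesis $\psi_\pm\in\H_{-2}\backslash\H_{-1}$.

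Second I identify $\Dom H^{0\,*}\cap\H_1$ with $\H_2$. By the unique representation (\ref{eq:fDomH*}), any $f\in\Dom H^{0\,*}$ reads $f=\tilde f+\tfrac12\sum_s b_s(f)(\Phi_s(i)+\Phi_s(-i))$ with $\tilde f\in\H_2\subset\H_1$. The singular correction $\Phi_s(i)+\Phi_s(-i)=2H(H^2+I)^{-1}\psi_s$ lies in $\H_0\backslash\H_1$, because $H(H^2+I)^{-1}$ restricts to a bounded bijection $\H_{-1}\to\H_1$ and $\psi_s\notin\H_{-1}$; the same reasoning applied to nontrivial linear combinations of $\psi_\pm$ modulo $\H_{-1}$ forces $b(f)=0$ whenever $f\in\H_1$, so $f=\tilde f\in\H_2$. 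Combining the two identifications, the Friedrichs extension of $H^0$ equals $H^{0\,*}\vert\H_2=H$, which matches $H_F$.

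The main obstacle is the second identification, which requires the hypothesis $\psi_\pm\in\H_{-2}\backslash\H_{-1}$ in its strong ``span'' form -- no nontrivial combination of $\psi_\pm$ lies in $\H_{-1}$. This strong form, customary in the theory of singular rank-two perturbations, is what excludes every nonzero deficiency direction $\Phi_s(i)+\Phi_s(-i)$ from the form domain $\H_1$, thereby singling out $H$ as the Friedrichs extension among the entire two-parameter family $H(\Gamma)$.
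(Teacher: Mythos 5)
Your proposal is correct, and it follows the same broad strategy as the paper --- identify the Friedrichs extension as the restriction of $H^{0\,*}$ to the domain of the closed form $\bar{\mathfrak{s}}$ of $H^{0}$ --- but the two arguments part ways at the decisive step, and yours lands on the right side. The paper asserts that $\Dom\bar{\mathfrak{s}}=\{f\in\H_{1}\co\braket{\psi,f}=0\}$, retains the constraint $\braket{\psi,\tilde{f}}=0$ alongside $b(f)=0$, and concludes that the Friedrichs extension is $H^{0\,*}\vert\Ker b\cap\Ker b^{\prime}$. But by (\ref{eq:bbp}) and (\ref{eq:fDomH*0}), $\Ker b\cap\Ker b^{\prime}$ forces $a_{s}^{+}=a_{s}^{-}=0$, i.e. it equals $\Dom H^{0}$, on which $H^{0\,*}$ restricts to the non-self-adjoint $H^{0}$; and the constraint $\braket{\psi,\cdot}=0$ cannot survive the $\H_{1}$-closure precisely because $\psi_{s}\notin\H_{-1}$ --- that is exactly what your Hahn--Banach step exploits. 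Your version --- the form domain is all of $\H_{1}$ with no residual constraint, $\Dom H^{0\,*}\cap\H_{1}=\H_{2}=\Ker b$, hence the Friedrichs extension is $H$ itself --- is the standard and correct outcome for perturbations of class $\H_{-2}\backslash\H_{-1}$, and it is the statement consistent with the paper's own identification $H_{F}=H$ and with the later use of $H_{F}$ (its spectrum is that of $H$). So you have not merely reproduced the paper's argument; you have repaired the step where it goes astray.

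Two smaller points. First, as you note, both the $\H_{1}$-density of $\Dom H^{0}$ and the exclusion of $\Phi_{s}(i)+\Phi_{s}(-i)$ from $\H_{1}$ require the ``span'' form of $\psi_{\pm}\in\H_{-2}\backslash\H_{-1}$ (no nontrivial combination lies in $\H_{-1}$); this does not follow formally from orthonormality plus $\psi_{\pm}\notin\H_{-1}$, but it holds in the concrete model because $c_{+}\psi_{+}+c_{-}\psi_{-}$ is $\delta$ tensored with a nonzero spinor and $\delta\notin H^{-1}(\R^{3})$, and the paper's own deduction of $b(f)=0$ needs it equally. Second, $H(H^{2}+I)^{-1}$ need not be a bijection of $\H_{-1}$ onto $\H_{1}$ when $0\in\sigma(H)$; the implication you actually use, namely $H(H^{2}+I)^{-1}\eta\in\H_{1}\Rightarrow\eta\in\H_{-1}$ for $\eta\in\spn\{\psi_{+},\psi_{-}\}$, is better obtained by adding $i(H^{2}+I)^{-1}\eta\in\H_{2}$ to get $(H-i)^{-1}\eta\in\H_{1}$ and then inverting the genuinely bijective $(H-i)^{-1}\co\H_{-1}\to\H_{1}$.
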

\begin{proof}
Let $H_{F}^{\prime}$ be the Friedrichs extension of $H^{0}$. We show that $H_{F}=H_{F}^{\prime}$.

Let $\mathfrak{s}$, with $\Dom \mathfrak{s}=\Dom H^{0}$, be the form associated with $H^{0}$:
\[\mathfrak{s}[f,g]:=\braket{H^{0}f,g}_{0}\quad(f,g\in \Dom H^{0}).\]
Then its closure $\bar{\mathfrak{s}}$ is defined on $\Dom\abs{H^{0}}^{1/2}$, \ie 
$\Dom \bar{\mathfrak{s}}=\{f\in \H_{1}\co \braket{\psi,f}=0\}$,
and, by definition, $H_{F}^{\prime}:=H^{0\,*}\vert \Dom \bar{\mathfrak{s}}$. 
Put (\ref{eq:fDomH*}) in $\braket{\psi,f}=0$, apply $\psi_{s}\notin\H_{-1}$, and deduce that
$b(f)=0$ and $\braket{\psi,\tilde{f}}=0$. But $\braket{\psi,\tilde{f}}=-b^{\prime}(f)$ and hence
$H_{F}^{\prime}=H^{0\,*}\vert\Ker b\cap \Ker b^{\prime}$ as required.
\end{proof}
\begin{rem}
According the theory of boundary triplets (see \eg \cite[Chap.~14]{Schmudgen12}), the self-adjoint
extensions of $H^{0}$ are in one-to-one correspondence with the self-adjoint relations on $\C^{2}$.
One can show that, for $C^{-1}\neq0$ and $C\neq0$, the extensions correspond to the relations whose range is of 
dimension $1$ and $2$. When $C=0$, the range of the corresponding relation on $\C^{2}$ is $\{0\}$, \ie
$H=H^{0\,*}\vert \Ker b$. When $C^{-1}=0$, the relation is $\{(0,0)\}$, \ie $H_{F}=H^{0\,*}\vert
\Ker b\cap \Ker b^{\prime}$.
\end{rem}
\section{Free Hamiltonian}
Throughout, $H^{m}$ ($m=1,2$) is the Sobolev space of 
$L^{2}$-functions whose distributional derivatives of order 
$\leq m$ belong to $L^{2}$. The closure of $C_{0}^{\infty}$ in
$H^{m}$-norm is $H_{0}^{m}$.

The closure of the operator $S$ defined on the domain (\ref{eq:DiffOp1}) is
a densely defined symmetric operator
\begin{equation}
H^{0}:=S\lvert H_{0}^{2}(\R^{3}\backslash\{0\})\ot\C^{2}.
\label{eq:FreeH}
\end{equation}
Equivalently,
\begin{equation}
H^{0}=S,\quad \Dom H^{0}=\{f\in\Dom H\co f(0)=0\} 
\label{eq:FreeH1} 
\end{equation}
where
\begin{equation}
H:=S\lvert H^{2}(\R^{3})\ot\C^{2}.
\label{eq:FreeH2}
\end{equation}
The operator $H$ in (\ref{eq:FreeH2}) is the maximal operator associated with $S$, and it is the closure of the 
operator $S\lvert C_{0}^{\infty}(\R^{3})\ot\C^{2}$.
By Gauss and Green formulas (see \eg \cite[Appendix~D]{Schmudgen12}) one deduces that 
the formal adjoint of $S\lvert C_{0}^{\infty}(\R^{3})\ot\C^{2}$
coincides with itself. Hence the closure of $S\lvert C_{0}^{\infty}(\R^{3})\ot\C^{2}$
is the minimal operator associated with $S$. Now that the adjoint of the minimal (resp. maximal) operator 
coincides with the maximal (resp. minimal) operator, it follows that $H$ is self-adjoint. We call $H$ the \textit{free
Hamiltonian}. By using exactly the same arguments one concludes that
the adjoint operator $H^{0\,*}$ of $H^{0}$, (\ref{eq:FreeH})--(\ref{eq:FreeH1}), is defined as follows:
\begin{equation}
H^{0\,*}=S\lvert H^{2}(\R^{3}\backslash\{0\})\ot\C^{2}.
\label{eq:AdjH}
\end{equation}

The free Hamiltonian $H$ has lower bound $-\Sigma$, where $\Sigma$ equals $\beta$ if 
$\beta>\alpha^{2}/2$ and $(\beta/\alpha)^{2}+(\alpha/2)^{2}$ otherwise. The Green function
for $H$, or the \textit{free Green function}, possesses a representation
\begin{align}
(H-z)^{-1}(x)=&
G_{2}(x;z)\ot I-\beta G_{1}(x;z)\ot\sigma_{3} 
+i\alpha\left(\nabla_{2}G_{1}(x;z)\ot\sigma_{1} \right. \nonumber \\
&-\left.\nabla_{1}G_{1}(x;z)\ot\sigma_{2}\right)\quad
(x\in\R^{3}\backslash\{0\})
\label{eq:frg}
\end{align}
provided that $z\in \resolv H$ meets at least one of the following:
\begin{enumerate}[\upshape (a)]
\item\label{item:a} $2\beta>\alpha^{2}$ and
$\displaystyle\beta\leq\abs{z}<2(\beta/\alpha)^{2}$
\item\label{item:b} $\abs{z}\geq\Sigma$; the equality is available
only if $0\leq2\beta<\alpha^{2}$
\item\label{item:c} $\displaystyle\abs{z}>\max\{\beta/(2\sqrt{R}),\alpha^{2}/(4S)\}$ and 
$\displaystyle R+\left(S-\tfrac{1}{2}\right)^{2}=\tfrac{1}{4}$
($R,S>0$).
\end{enumerate}
We always assume that $z\in \resolv H$ satisfies at least one of (\ref{item:a})--(\ref{item:c}) unless explicitly
stated otherwise. In particular, $(H-i)^{-1}(\cdot)$ possesses a series representation (\ref{eq:frg}) if $\Sigma\leq1$ 
(put $\abs{z}=1$ in (\ref{item:a})--(\ref{item:c})).
The functions $G_{j}(x;z)\equiv G_{j}(x;\alpha,\beta,z)$ ($j=1,2$) obey a hypergeometric series representation,
and they are studied in \cite{Jursenas14}. Let 
\[G_{2}^{\mrm{ren}}(x;z)\equiv G_{2}^{\mrm{ren}}(x;\alpha,\beta,z):=G_{2}(x;z)-
\frac{e^{-\abs{x}\sqrt{-z}}}{4\pi\abs{x}}.\]
When $x=0$, one assumes the limit $\abs{x}\downarrow0$ on the right. We have that 
\begin{subequations}\label{eq:G1X10renX10}
\begin{align}
G_{1}(0;\alpha,\beta, z)=&
\frac{1}{4\pi\alpha}\mrm{artanh}
\left(\frac{\alpha}{\beta}\sqrt{ \frac{- z}{2}
\left(1-\sqrt{ 1-\left(
\frac{\beta}{ z} \right)^{2} } \right) }\right), 
\label{eq:G1X0} \\
G_{2}^{\mrm{ren}}(0;\alpha,\beta, z)=&
\frac{\sqrt{- z}}{4\pi}-\frac{1}{4\pi}
\sqrt{ \frac{- z}{2}
\left(1+\sqrt{ 1-\left(
\frac{\beta}{ z} \right)^{2} }\right)} \nonumber \\
&+\frac{ \alpha }{ 8\pi }
\mrm{artanh}
\left(\frac{\alpha}{\beta}\sqrt{ \frac{- z}{2}
\left(1-\sqrt{ 1-\left(
\frac{\beta}{ z} \right)^{2} } \right) }\right).
\label{eq:G2renX0}
\end{align}
\end{subequations}
When $ z\in\resolv H$, the inverse hyperbolic tangent 
$\mrm{artanh}$ is defined entirely on $\C\backslash\{-1,1\}$.
Let $U_{\epsilon}$ be an $\epsilon$-neighborhood
of the origin $0\in\R^{3}$. For $x\in U_{\epsilon}$ and for $\epsilon>0$ small,
\begin{equation}
\nabla_{j}G_{1}(x;\alpha,\beta, z)=
-\frac{\hat{x}_{j}}{8\pi},\quad
\hat{x}_{j}:=\frac{x_{j}}{\abs{x}}\quad
(j=1,2).
\label{eq:nablajG1X0}
\end{equation}
\section{Orthonormal functionals}
Let $\delta$ be the Dirac distribution concentrated at $0\in\R^{3}$. Define the singular distribution
\[\psi_{s}:=\mathcal{N}_{s}\delta\ot g_{s} \quad (s=\pm),\quad \text{where}\quad 
g_{+}:=\begin{pmatrix}
1 \\ 0
\end{pmatrix},\quad 
g_{-}:=\begin{pmatrix}
0 \\ 1
\end{pmatrix}.\]
The normalization constant $\mathcal{N}_{s}>0$ is defined as
\[\mathcal{N}_{s}:=\left(\img G_{s}^{\mrm{ren}}(0;i)+\frac{1}{4\sqrt{2}\pi}\right)^{-1/2}.\]
Here we write, for simplicity, 
\begin{align*}
G_{s}^{\mrm{ren}}(x;z)\equiv G_{s}^{\mrm{ren}}(x;\alpha,\beta,z):=&
G_{s}(x;z)- \frac{e^{-\abs{x}\sqrt{-z}}}{4\pi\abs{x}} \quad (x\in\R^{3};\,z\in \resolv H), \\
G_{s}(x;z)\equiv G_{s}(x;\alpha,\beta,z):=&
G_{2}(x;z)- s\beta G_{1}(x;z) \quad (x\in\R^{3}\backslash\{0\};\,z\in \resolv H).
\end{align*}
For $\alpha=\beta=0$, $\mathcal{N}_{s}=2\sqrt[4]{2}\sqrt{\pi}$ is as in
\cite[Sec.~2.3]{Albeverio00}, where the authors examine the Laplace operator in 
dimension three (recall that $S=-\Delta\ot I$ for $\alpha=\beta=0$) with the interaction determined using
the model of generalized perturbations.

In general, the relation $\mathcal{N}_{s}>0$ can be shown as follows. Observe that
\begin{subequations}\label{eq:stp5}
\begin{align}
\img G_{1}(0;i)=&
\frac{ 1 }{ 8\pi\alpha }\left(\Arg\left(1+c(1+i)\right)-\Arg\left(1-c(1+i)\right)\right), \\
\img G_{2}^{\mrm{ren}}(0;i)=&
-\frac{1}{4\sqrt{2}\pi}+\frac{1}{8\pi}\sqrt{ 1+\sqrt{ 1+\beta^{2} } } \nonumber \\
&+\frac{\alpha}{16\pi}
\left(\Arg\left(1+c(1+i)\right)-\Arg\left(1-c(1+i)\right)\right).
\end{align}
\end{subequations}
Here we define, for convenience,
\[c:=\frac{\alpha}{2\beta}\sqrt[4]{ 2+\beta^{2}-2\sqrt{ 1+\beta^{2} } } \quad
(c\geq0),\]
and $\Arg$ is the principal value of the argument; the range of $\Arg$ is in
$(-\pi,\pi]$. Then
\begin{equation}
\mathcal{N}_{ s}^{-2}=
\frac{ 1 }{ 8\pi }
\left(\sqrt{1+\sqrt{1+\beta^{2}}}+
\left(\frac{\alpha}{2}-\frac{ s\beta}{\alpha}\right) 
\left(\Arg\left(1+c(1+i)\right)-\Arg\left(1-c(1+i)\right)\right) \right).
\label{eq:phinorm*}
\end{equation}
Seeing that
\begin{align*}
&\Arg\left(1+c(1+i)\right)-\Arg\left(1-c(1+i)\right)
\geq0\,(=0\,\text{iff}\,\alpha=0), \\
&\frac{\alpha}{2}-\frac{ s\beta}{\alpha}\geq0\quad
\text{iff}\quad s=-1\quad\text{or}\quad s=1\quad\text{and}\quad
\beta\leq\frac{\alpha^{2}}{2}, \\
&\frac{\alpha}{2}-\frac{ s\beta}{\alpha}<0\quad
\text{iff}\quad s=1\quad\text{and}\quad
\beta>\frac{\alpha^{2}}{2},
\end{align*}
we conclude from (\ref{eq:phinorm*}) that the condition
$\mathcal{N}_{ s}^{-2}>0$ is equivalent to the condition
\begin{align*}
\sqrt{1+\sqrt{1+\beta^{2}}}>&f(\alpha,\beta)
\quad \text{for}\quad \beta>\frac{\alpha^{2}}{2}, \quad\text{where} \\
f(\alpha,\beta):=&\left(\frac{\beta}{\alpha}-\frac{\alpha}{2}\right)
\left(\Arg\left(1+c(1+i)\right)-\Arg\left(1-c(1+i)\right)\right).
\end{align*}
But, for a fixed $\beta\geq0$,
\[f(\alpha,\beta)\leq f(0,\beta)=
\sqrt[4]{ 2+\beta^{2}-2\sqrt{ 1+\beta^{2} } }
<\sqrt{1+\sqrt{1+\beta^{2}}}.\]
Therefore, $\mathcal{N}_{ s}^{-2}>0$ and this last estimate implies that we can always choose
$\mathcal{N}_{s}>0$.

Next, using (\ref{eq:DefEl0}), 
\begin{equation}
\Phi_{s}(z)\equiv\Phi_{s}(x;z)=\mathcal{N}_{s}\left(G_{s}(x;z)\ot g_{s}+ 
s\alpha D_{s}G_{1}(x;z)\ot g_{-s}\right)\quad(x\in \R^{3}\backslash\{0\})
\label{eq:fsigma}
\end{equation}
with arguments and parameters as in (\ref{eq:DefEl0}). Here $D_{s}:=\nabla_{1}+is\nabla_{2}$.
Below we prove two equivalent relations: 
\begin{itemize}
\item $(\Phi_{s}(i))_{s=\pm}$ is an orthonormal system in $\H_{0}=L^{2}(\R^{3})\ot\C^{2}$
\item $(\psi_{s})_{s=\pm}$ is an orthonormal system in $\H_{-2}\backslash\H_{-1}$
($\H_{-n}=H^{-n}(\R^{3})\ot\C^{2}$; $n=1,2$).
\end{itemize}
The important conclusion is that, if the above relations hold true, then the operator $H^{0}$ in 
(\ref{eq:FreeH})--(\ref{eq:FreeH1}) may be treated similar to the operator $H^{0}$ in (\ref{eq:H0}); 
subsequently, the functions $(\Phi_{s}(\pm i))_{s=\pm}$ in (\ref{eq:fsigma}) form deficiency subspaces 
of the adjoint operator $H^{0\,*}$ defined in (\ref{eq:Hadjf}), (\ref{eq:AdjH}).

Using (\ref{eq:DefEl0}) and 
\[\frac{ H }{ H^{2}+I }=\tfrac{1}{2}\left(\frac{ 1 }{ H-i }+\frac{ 1 }{ H+i }\right),\]
we have that 
\[\Braket{\psi_{s},\frac{ H }{ H^{2}+I}\psi_{s} }
=\tfrac{1}{2}\braket{\psi_{s},\Phi_{s}(i)+\Phi_{s}(-i)}.\]
But
\begin{align*}
\abs{\braket{\psi_{s},\Phi_{s}(i)+\Phi_{s}(-i)}}=&
2\mathcal{N}_{s}^{2}\abs{\re G_{s}(0;i)} \\
=&2\mathcal{N}_{s}^{2}
\aaabs{ \re G_{s}^{\mrm{ren}}(0;i)+\lim_{\abs{x}\downarrow0}\re 
\frac{ e^{ -\abs{x}\sqrt{-i} } }{ 4\pi\abs{x} } }
=\infty.
\end{align*}
Then
\[\aaabs{ \Braket{\psi_{s},\frac{ H }{ H^{2}+
I }\psi_{s} }}\leq2\norm{ \psi_{s} }_{-1}^{2}\]
(\cf \cite[Proof of Theorem~3.1]{Albeverio97-1}) implies that $\psi_{s}\notin\H_{-1}$.
Next, we show that 
\begin{equation}
\braket{\Phi_{s}(z),\Phi_{s^{\prime}}(z)}_{0}
=\frac{\delta_{ss^{\prime}}\mathcal{N}_{s}^{2}}{\img z}
\img\left(G_{s}^{\mrm{ren}}(0;z)-\frac{\sqrt{-z}}{4\pi}\right)
\label{eq:Phizorth}
\end{equation}
($s,s^{\prime}=\pm$; $z\in\resolv H$; $\img z\neq0$; $\delta_{ss^{\prime}}$ is the Kronecker symbol). 
When $\img z=0$, take the limit $\img z\to0$ in (\ref{eq:Phizorth}):
\begin{align}
\braket{\Phi_{s}(z),\Phi_{s^{\prime}}(z)}_{0}=&
\frac{ \delta_{ss^{\prime}}\mathcal{N}_{s}^{2} }{ 
8\sqrt{2}\pi\sqrt{ z^{2}-\beta^{2} } } \nonumber \\
&\times 
\left(\sqrt{ -z+\sqrt{ z^{2}-\beta^{2} } }+
\frac{ \beta\left(\alpha^{2}-2s\beta\right)
\sqrt{ -z-\sqrt{ z^{2}-\beta^{2} } } }{ 
2\beta^{2}+\alpha^{2}\left(z+\sqrt{ z^{2}-\beta^{2} }\right) }\right)
\label{eq:Phizorth2}
\end{align}
($s,s^{\prime}=\pm$; $z<-\Sigma$).

Since
\[\frac{1}{H-\bar{z}}\frac{1}{H-z}=\frac{1}{2i\img z}\left(\frac{1}{H-z}-\frac{1}{H-\bar{z}}\right)\]
it follows from (\ref{eq:DefEl0}) that
\[\braket{\Phi_{s}(z),\Phi_{s^{\prime}}(z)}_{0}=
\frac{1}{2i\img z}\braket{\psi_{s},\Phi_{s^{\prime}}(z)-\Phi_{s^{\prime}}(\bar{z})}.\]
Then, using (\ref{eq:fsigma}),
\begin{equation}
\braket{\Phi_{s}(z),\Phi_{s^{\prime}}(z)}_{0}=
\frac{\mathcal{N}_{s}\mathcal{N}_{s^{\prime}}}{\img z} 
\lim_{\abs{x}\downarrow0}\left(\delta_{ss^{\prime}}
\img G_{s}(x;z)-s\alpha\delta_{s,-s^{\prime}}D_{-s}\img G_{1}(x;z)\right).
\label{eq:stp2}
\end{equation}
But
\begin{align}
\lim_{\abs{x}\downarrow0}\img G_{s}(x;z)=&
\frac{1}{2i}\lim_{\abs{x}\downarrow0}\left(G_{s}(x;z)-G_{s}(x;\bar{z})\right) 
\nonumber \\
=&
\img G_{ s}^{\mrm{ren}}(0; z)
+\frac{1}{2i}\lim_{\abs{x}\downarrow0}\left(
\frac{ e^{ -\abs{x}\sqrt{- z} } }{ 4\pi\abs{x} }
-\frac{ e^{ -\abs{x}\sqrt{-\bar{ z}} } }{ 4\pi\abs{x} }\right) \nonumber \\
=&\img G_{ s}^{\mrm{ren}}(0; z)-\frac{\img\sqrt{- z}}{4\pi}
\label{eq:stp3}
\end{align}
and, by (\ref{eq:nablajG1X0}),
\begin{align}
\lim_{\abs{x}\downarrow0}D_{ s}\img G_{1}(x; z)=&
\frac{1}{2i}\lim_{\abs{x}\downarrow0}D_{ s}\left(G_{1}(x; z)-G_{1}(x;\bar{ z})\right) 
\nonumber \\
=&\frac{1}{2i}\lim_{\abs{x}\downarrow0}
\left(-\frac{ \hat{x}_{1}+i s\hat{x}_{2} }{ 8\pi }
+\frac{ \hat{x}_{1}+i s\hat{x}_{2} }{ 8\pi }\right)
=0.
\label{eq:stp4}
\end{align}
Put (\ref{eq:stp3}) and (\ref{eq:stp4}) in (\ref{eq:stp2}) and deduce (\ref{eq:Phizorth}). Since $\H_{-1}\subset\H_{-2}$
densely, we conclude that $(\psi_{s})_{s=\pm}$ is an orthonormal system in $\H_{-2}\backslash\H_{-1}$.
\section{Spectrum}\label{sec:sp}
Here we apply the resolvent formula (\ref{eq:resolvAK}) to the spectral analysis of the operator $H(\Gamma)$
regarded as a perturbed Hamiltonian $H+V$ and where the perturbation $V$ is referred to as the spin-dependent 
contact interaction (recall (\ref{eq:HV})). We assume that $\Gamma\neq-R$.
When $\Gamma=-R$, the corresponding operator is $H_{F}$ (Proposition~\ref{prop:Infty2}). 
Since the spectrum of $H_{F}$ coincides
with that of $H$, and the spectrum of $H$ is $[-\Sigma,\infty)$ and it is absolutely continuous, we mainly concentrate
on the spectrum of $H(\Gamma)$. We adopt the classification of the spectrum in 
\cite[Chap.~9]{Schmudgen12}, \cite[Sec.~VII.3]{Reed80}.

The \textit{continuous spectrum} of $H(\Gamma)$ is given by $[-\Sigma,\infty)$. This follows from an invariance of the 
continuous component of the spectrum under singular finite rank perturbations \cite[Theorem~4.1.4]{Albeverio00}
and from the resolvent formula.

The real and isolated singularities of the resolvent operator of $H(\Gamma)$
coincide with the points $E\in\R$ that solve $\det(\Gamma-Q(E))=0$.
Therefore, the \textit{singular spectrum} of $H(\Gamma)$ consists of the points $E\in\R$ such that
\begin{equation}
\det(\Gamma-Q(E))=0.
\label{eq:singularsp}
\end{equation}
The Krein $Q$-matrix function, which is defined in (\ref{eq:w3}), is a diagonal matrix 
$Q(z)\equiv(Q_{ss^{\prime}}(z))$ whose entries are given by
\begin{equation}
Q_{ s s^{\prime}}(z)=\delta_{ s s^{\prime}}
\mathcal{N}_{s}^{2}\left(\frac{1}{4\sqrt{2}\pi}-\frac{\sqrt{-z}}{4\pi}
+G_{s}^{\mrm{ren}}(0; z)-\re G_{s}^{\mrm{ren}}(0;i)\right)
\label{eq:KreinQ}
\end{equation}
($s, s^{\prime}=\pm$). To prove (\ref{eq:KreinQ}), notice that
\[\frac{ I+ z H }{ H- z } \frac{1}{ H^{2}+I }=
\frac{1}{H- z}-\frac{1}{2}\frac{1}{H-i}-\frac{1}{2}\frac{1}{H+i}.\]
Then, using (\ref{eq:DefEl0}) and (\ref{eq:w3}), 
\[Q_{ s s^{\prime}}( z)=
\Braket{\psi_{ s},\Phi_{ s^{\prime}}( z)-
\tfrac{1}{2}\Phi_{ s^{\prime}}(i)-\tfrac{1}{2}\Phi_{ s^{\prime}}(-i) }.\]
By (\ref{eq:fsigma}), it follows that
\begin{align*}
Q_{ s s^{\prime}}( z)=&
\lim_{\abs{x}\downarrow0}\left[
\delta_{ s s^{\prime}}
\mathcal{N}_{ s}^{2}
\left(G_{ s}(x; z)-\re G_{ s}(x;i)\right)\right. \\
&\left.- s\alpha\delta_{ s,- s^{\prime}}
\mathcal{N}_{ s}\mathcal{N}_{ s^{\prime}}
D_{- s}\left(G_{1}(x; z)-\re G_{1}(x;i)\right)\right].
\end{align*}
But 
\begin{align*}
\lim_{\abs{x}\downarrow0}\left(G_{ s}(x; z)-\re G_{ s}(x;i)\right)
=&G_{ s}^{\mrm{ren}}(0; z)-\re G_{ s}^{\mrm{ren}}(0;i) \\
&+\lim_{\abs{x}\downarrow0}\left(
\frac{ e^{ -\abs{x}\sqrt{- z} } }{ 4\pi\abs{x} }
-\re \frac{ e^{ -\abs{x}\sqrt{-i} } }{ 4\pi\abs{x} }\right) \\
=&G_{ s}^{\mrm{ren}}(0; z)-\re G_{ s}^{\mrm{ren}}(0;i) 
+\frac{1}{4\sqrt{2}\pi}-\frac{\sqrt{- z}}{4\pi}
\end{align*}
and
\[\lim_{\abs{x}\downarrow0}
D_{- s}\left(G_{1}(x; z)-\re G_{1}(x;i)\right)=0\]
which proves (\ref{eq:KreinQ}). In particular, $\img Q_{ss}(z)=\norm{ \Phi_{s}(z) }_{0}^{2}\img z$
and $Q_{ss}(\pm i)=\pm i$. For an arbitrary $z\in\C$, but for $\alpha=\beta=0$,
$Q_{ss}(z)=1-\sqrt{-2z}$ coincides with the $Q$-function obtained in \cite[Sec.~2.3.1]{Albeverio00}.
\begin{lem}\label{lem:eigenf}
The eigenspace $\Ker(H(\Gamma)-E)$ consists of eigenfunctions 
\begin{equation}
f(E+i\epsilon)=\sum_{s}b_{s}(f)\Phi_{s}(E+i\epsilon)
\label{eq:eigenf}
\end{equation}
where the coefficients $(b_{s}(f))_{s=\pm}$ are found from the
boundary condition defining the domain of $H(\Gamma)$. For $E<-\Sigma$, $\epsilon=0$. 
For $E\geq-\Sigma$, $\epsilon>0$ and
$f(E+i\epsilon)$ is understood in the generalized sense, \ie $f(E+i\epsilon)$
approaches $f(E)$ strongly in $\H_{0}$ as $\epsilon\downarrow0$.
\end{lem}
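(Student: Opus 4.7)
The natural strategy is first to identify $\Ker(H^{0\,*}-E)$ explicitly and then to cut out of it those elements that satisfy the boundary condition (\ref{dom:AK}) defining $\Dom H(\Gamma)$. Since $H(\Gamma)$ is a restriction of $H^{0\,*}$, one automatically has $\Ker(H(\Gamma)-E)\subset\Ker(H^{0\,*}-E)$, so the entire task reduces to a two-dimensional linear-algebra problem on the deficiency subspace.

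For $E<-\Sigma$, so $E\in\resolv H$: the excerpt already records that $\Phi_s(E)\in\Ker(H^{0\,*}-E)$ and that $H^{0\,*}$ has deficiency indices $(2,2)$; linear independence of $\psi_{\pm}$ (established in the previous section) transfers to $\Phi_{\pm}(E)=(H-E)^{-1}\psi_{\pm}\in\H_0$, and these span $\Ker(H^{0\,*}-E)$. Hence any $f\in\Ker(H(\Gamma)-E)$ is of the form $\sum_s b_s\Phi_s(E)$. To cast this into the decomposition (\ref{eq:fDomH*}), a short resolvent-identity computation yields
\begin{equation*}
\Phi_s(E)-\tfrac{1}{2}(\Phi_s(i)+\Phi_s(-i))=(H-E)^{-1}\frac{I+EH}{H^{2}+I}\psi_s\in\H_{2},
\end{equation*}
so the $b_s$ appearing in (\ref{eq:eigenf}) are genuinely the functionals $b_s(f)$ and $\tilde f$ is the corresponding sum. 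Pairing with $\psi_s$ and recognizing the right-hand side as (\ref{eq:w3}) gives $\braket{\psi,\tilde f}=Q(E)b(f)$. Substituting into (\ref{eq:psiext}) and then into (\ref{dom:AK}) collapses the boundary condition to the linear system $(\Gamma-Q(E))b(f)=0$, which has nontrivial solutions precisely when (\ref{eq:singularsp}) holds and which fixes the admissible coefficients.

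For $E\geq-\Sigma$, the function $\Phi_s(E)$ need not lie in $\H_0$, so the above argument must be run at $E+i\epsilon$ with $\epsilon>0$: $f(E+i\epsilon)=\sum_s b_s\Phi_s(E+i\epsilon)$ satisfies the domain condition iff $(\Gamma-Q(E+i\epsilon))b(f)=0$, and the generalized eigenfunction $f(E)$ is the strong $\H_0$-limit $\lim_{\epsilon\downarrow0}f(E+i\epsilon)$.

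The principal obstacle is this last step. Each $\Phi_s(E+i\epsilon)$ individually fails to have an $\H_0$-limit as $\epsilon\downarrow0$ when $E$ lies in the continuous spectrum of $H$, so the existence of a strong limit of $f(E+i\epsilon)$ hinges on a delicate cancellation between the coefficients $b_s$ (determined by the boundary values of $Q$) and the divergent parts of $\Phi_s(E+i\epsilon)$. Making this rigorous requires the boundary behavior of $Q(\cdot)$, and hence of $G_j^{\mrm{ren}}(0;\cdot)$ from (\ref{eq:G1X10renX10}), as the spectral parameter approaches $[-\Sigma,\infty)$ from the upper half-plane, and the eventual existence of the limit is equivalent to $E$ being a genuine embedded eigenvalue of $H(\Gamma)$.
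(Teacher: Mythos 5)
Your argument for $E<-\Sigma$ is sound and matches the intended picture: $\Ker(H^{0\,*}-E)$ is spanned by $\Phi_\pm(E)$, the boundary condition collapses to $(\Gamma-Q(E))b(f)=0$, done. The issue is the embedded case $E\geq-\Sigma$, where you explicitly stop short: you flag the strong convergence $f(E+i\epsilon)\to f(E)$ as the ``principal obstacle'' and say that resolving it would require controlling the boundary behavior of $Q(\cdot)$ and $G_j^{\mrm{ren}}(0;\cdot)$ on $[-\Sigma,\infty)$. That is precisely where the paper does something different and more economical, and it is a genuine gap in your proposal because you have named the problem but not solved it.

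The paper's device is to start from the assumption that $f(E)\in\Ker(H(\Gamma)-E)\subset\H_0$ already exists, and then to realize the right-hand side of (\ref{eq:eigenf}) not as a limit of deficiency elements whose divergences must cancel, but as the image of the genuine eigenfunction under a bounded operator. Concretely, plugging (\ref{eq:fDomH*}) and (\ref{eq:Hadjf}) into $(H^{0\,*}-E)f(E)=0$ yields $(H-E)\tilde f(E)=\tfrac12\sum_s b_s(f)\left((E-i)\Phi_s(i)+(E+i)\Phi_s(-i)\right)$, and applying $(H-E_\epsilon)^{-1}$ (bounded for $\epsilon>0$) to both the decomposition of $f(E)$ and to this equation shows that
\begin{equation*}
f(E_\epsilon):=\sum_s b_s(f)\,\Phi_s(E_\epsilon)=(H-E_\epsilon)^{-1}(H-E)f(E).
\end{equation*}
From here no analysis of $Q$ on the real axis is needed: one computes $\norm{f(E_\epsilon)-f(E)}_0^2=\epsilon\,\img F(E_\epsilon)$ with $F$ the Borel transform of the spectral measure $\mu(\cdot)=\braket{f(E),\E(\cdot)f(E)}_0$ of the \emph{free} operator $H$, then invokes the standard limit $\mu(\{E\})=\lim_{\epsilon\downarrow0}\epsilon\,\img F(E+i\epsilon)$, and finally uses $\E(\{E\})=0$ for $E\geq-\Sigma$ (the spectrum of $H$ there is absolutely continuous) to conclude that the limit is zero. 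The ``delicate cancellation'' you worried about is automatic because the quantity whose limit is taken is already expressed in terms of the free spectral measure, not in terms of the individual $\Phi_s(E_\epsilon)$. Your proposed route via boundary values of $Q(\cdot)$ is not what the paper does and would be substantially harder to carry out; the spectral-measure/Borel-transform argument is the missing idea.
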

\begin{proof}
The proof of the lemma for $E<-\Sigma$ is a natural generalization of results posted
in \cite[Sec.~2.3.1]{Albeverio00}, so we concentrate on the case $E\geq-\Sigma$.

For convenience, define $E_{\epsilon}:=E+i\epsilon$. Let 
$f(E)\in \Ker(H(\Gamma)-E)$. Then $f(E)$ possesses representations (\ref{eq:fDomH*}),
(\ref{eq:fDomH*0}) with an appropriate boundary condition. Plugging (\ref{eq:fDomH*}) and (\ref{eq:Hadjf}) with 
$\tilde{f}\equiv\tilde{f}(E)$ into the eigenvalue equation, we obtain the equation
\begin{equation}
(H-E)\tilde{f}(E)=
\tfrac{1}{2}\sum_{ s}b_{ s}(f)\left(
(E-i)\Phi_{ s}(i)+(E+i)\Phi_{ s}(-i)\right).
\label{eq:eigen1}
\end{equation}
By hypothesis, the operator $(H-E)^{-1}$ is unbounded in $\H_{0}$
and it cannot be applied to (\ref{eq:eigen1}). We apply $(H-E_{\epsilon})^{-1}$ instead.
Define
\begin{subequations}\label{eq:eigen2a}
\begin{equation}
\tilde{f}_{\epsilon}(E):=(H-E_{\epsilon})^{-1}(H-E)\tilde{f}(E).
\label{eq:eigen2a-1} 
\end{equation}
Put (\ref{eq:eigen1}) in (\ref{eq:eigen2a-1}):
\begin{equation}
\tilde{f}_{\epsilon}(E)=\tfrac{1}{2}\sum_{s}b_{s}(f)(H-E_{\epsilon})^{-1}
\left((E-i)\Phi_{s}(i)+(E+i)\Phi_{s}(-i)\right).
\label{eq:eigen2a-2}
\end{equation}
\end{subequations}
Let $f(E_{\epsilon})$ be as in (\ref{eq:eigenf}). Similar to (\ref{eq:eigen2a-1}), define
\begin{equation}
f_{\epsilon}(E):=(H-E_{\epsilon})^{-1}(H-E)f(E).
\label{eq:eigen4a}
\end{equation} 
Use $f(E)$ in (\ref{eq:fDomH*}) and plug the representation into (\ref{eq:eigen4a}).
Then apply (\ref{eq:DefEl0}) and (\ref{eq:eigen2a}):
\begin{align*}
f_{\epsilon}(E)=&
(H-E_{\epsilon})^{-1}(H-E)\left(
\tilde{f}(E)+\tfrac{1}{2}\sum_{s} b_{s}(f)
\left(\Phi_{s}(i)+\Phi_{s}(-i)\right)\right) \\
=&\tilde{f}_{\epsilon}(E)+
\tfrac{1}{2}\sum_{s} b_{s}(f)
(H-E_{\epsilon})^{-1}(H-E)
\left(\Phi_{s}(i)+\Phi_{s}(-i)\right) \\
=&\tfrac{1}{2}\sum_{s}b_{s}(f)
(H-E_{\epsilon})^{-1}
\left((E-i)\Phi_{ s}(i)+(E+i)\Phi_{ s}(-i)\right) \\
&+\tfrac{1}{2}\sum_{ s} b_{ s}(f)
(H-E_{\epsilon})^{-1}(H-E)
\left(\Phi_{ s}(i)+\Phi_{ s}(-i)\right) \\
=&\tfrac{1}{2}\sum_{ s}b_{ s}(f)
(H-E_{\epsilon})^{-1}(\psi_{ s}+\psi_{ s})
=f(E_{\epsilon}).
\end{align*}
Let $\E$ be the resolution of the identity for $H$, and let $\mu(\cdot):=\braket{f(E),\E(\cdot)f(E)}_{0}$
be the spectral measure. Using (\ref{eq:eigen4a}) and $f_{\epsilon}(E)=f(E_{\epsilon})$,
\[\norm{ f(E_{\epsilon})-f(E) }_{0}=
\epsilon\norm{ (H-E_{\epsilon})^{-1}f(E) }_{0}\]
or equivalently,
\begin{equation}
\norm{ f(E_{\epsilon})-f(E) }_{0}^{2}=
\epsilon\img F(E_{\epsilon}),\quad
F(z):=\int_{\R}
\frac{ \mu(d\nu) }{ \nu-z }\quad (z\in \resolv H).
\label{eq:eigen5}
\end{equation}
By \cite[Theorem~11.6-(iii)]{Simon05}, the Borel transform $F$ of $\mu$ fulfills
\begin{equation}
\mu(\{E\})=\lim_{\epsilon\downarrow0}
\epsilon\img F(E+i\epsilon).
\label{eq:eigen5d}
\end{equation}
Since $\E(\{E\})=0$ for all $E\geq-\Sigma$,
we conclude that $\mu(\{E\})=0$. Put this value in (\ref{eq:eigen5d}),
and then deduce the result stated in the lemma from (\ref{eq:eigen5}).
\end{proof}
\begin{thm}\label{thm:sc}
The singular continuous part is absent from the spectrum of $H(\Gamma)$. 
\end{thm}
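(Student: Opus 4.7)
The strategy is to combine the resolvent formula~(\ref{eq:resolvAK}) with the Nevanlinna structure of the $2\times 2$ Krein $Q$-matrix function $Q(z)$ to show that, on the continuous spectrum $(-\Sigma,\infty)$, the resolvent $(H(\Gamma)-z)^{-1}$ admits a bounded limit from the upper half plane for almost every $E$. By the limiting absorption principle this rules out any singular continuous contribution. The residual singular part is then pure point, consisting of isolated eigenvalues for $E<-\Sigma$ and of possibly embedded eigenvalues for $E\geq-\Sigma$, both accounted for by Lemma~\ref{lem:eigenf}.

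\textbf{Key step.} The decisive input is the identity $\img Q_{ss}(z)=\norm{\Phi_{s}(z)}_{0}^{2}\img z$ recorded just after (\ref{eq:KreinQ}), which makes each diagonal entry of $Q$ a scalar Nevanlinna function and $Q$ itself a matrix-valued Nevanlinna function. Since $H$ has purely absolutely continuous spectrum on $[-\Sigma,\infty)$, the boundary value $\img Q_{ss}(E+i0)$ coincides, up to a positive constant, with the Radon--Nikodym density of the spectral measure of $\psi_{s}$ relative to $H$. I would argue that $\img Q_{ss}(E+i0)>0$ for almost every $E\in(-\Sigma,\infty)$, using the explicit form of $G_{s}^{\mrm{ren}}(0;z)$ on the branch cut. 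Since $\Gamma$ is Hermitian and $Q(E+i0)$ is diagonal, this strict positivity makes $\Gamma-Q(E+i0)$ invertible with a locally bounded inverse for a.e.\ such $E$, and substituting into (\ref{eq:resolvAK}) yields the required control on the resolvent boundary values, forcing the continuous part of the spectral measure of $H(\Gamma)$ on $(-\Sigma,\infty)$ to be purely absolutely continuous.

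\textbf{Main obstacle.} The delicate point is making the strict positivity of $\img Q_{ss}(E+i0)$ on $(-\Sigma,\infty)$ rigorous uniformly in $(\alpha,\beta)$. Though conceptually clear from the absolute continuity of $H$ and the fact that $\psi_{s}$ is a nontrivial functional, a clean proof requires tracking the branch structure of the hypergeometric representation of $G_{s}^{\mrm{ren}}(0;z)$ from \cite{Jursenas14} across the dispersion regimes depicted in Fig.~\ref{fig:ABC}. Alternatively, one can bypass this computation by directly invoking a general theorem on singular rank-$n$ perturbations, e.g.\ in \cite[Chap.~4]{Albeverio00}, but its applicability still reduces to the same Nevanlinna/positivity property of $Q$.
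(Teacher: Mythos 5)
Your proposal takes a genuinely different route from the paper. The paper does not work through Nevanlinna boundary values of $Q$ at all; instead it derives, via the resolvent formula~(\ref{eq:resolvAK}), the integral representation of the resolution of the identity, and the residue theorem, the explicit formula~(\ref{eq:E}) for the projection $\E(\Gamma;\{E\})$ onto a singleton of the singular spectrum, showing that its range lies in $\spn\{\Phi_{s}(E_{n}+i0)\}$. Combined with Lemma~\ref{lem:eigenf}, which identifies precisely this span as the eigenspace, and a decomposition $\HS=\HP\op\HSC$, the paper concludes $\HSC=\{0\}$ by a direct orthogonality argument, never invoking the Herglotz property of $Q$ or the positivity of $\img Q_{ss}(E+i0)$. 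Your approach is the one more often seen in the abstract Krein/Weyl-function literature, and if completed it would be somewhat more portable; the paper's is tailored to this model, exploiting the finite rank of the perturbation, the diagonality of $Q$, and the explicit residue calculus.

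That said, as written your argument has a gap beyond the one you flag. You correctly note that the strict positivity $\img Q_{ss}(E+i0)>0$ for a.e.\ $E\in(-\Sigma,\infty)$ is not established; but even granting it, ``a.e.'' is not enough to exclude a singular continuous component. The singular part of the spectral measure of $H(\Gamma)$ restricted to the deficiency directions lives on the Lebesgue-null set where $\Gamma-Q(E+i0)$ fails to be boundedly invertible, and a singular continuous measure is perfectly capable of sitting on such a null set (this is precisely the Aronszajn--Donoghue/del Rio--Simon phenomenon, which occurs even for rank-one perturbations of operators with purely absolutely continuous spectrum). To close the argument you would need to control the \emph{size} of the exceptional set -- for instance by appealing to real-analyticity of $E\mapsto Q(E+i0)$ on $(-\Sigma,\infty)$ away from the branch points $E=0,\pm\beta$, which would force the zero set of $\img Q_{ss}(E+i0)$ to be discrete and hence unable to carry a singular continuous measure. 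Such analyticity is available from~(\ref{eq:G1X10renX10}), and indeed the paper implicitly relies on the same discreteness when it treats the singular spectrum as a finite set $\{E_{1},\dots,E_{N}\}$ of solutions of~(\ref{eq:singularsp}); but your proposal does not state or use it, so the claim that a.e.\ invertibility of $\Gamma-Q(E+i0)$ ``forces'' the continuous part to be purely absolutely continuous does not follow as stated.

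One further small point: the inference from ``$H$ has purely absolutely continuous spectrum'' to ``$\img Q_{ss}(E+i0)>0$ a.e.'' is also not automatic. Absolute continuity of $H$ gives that the spectral measure of $\psi_{s}$ with respect to $H$ is absolutely continuous, hence that $\img Q_{ss}(E+i0)\geq 0$ a.e.\ and exists a.e.; it does not preclude the density from vanishing on a set of positive Lebesgue measure. The strict positivity really does have to come out of the explicit boundary values of $G_{s}^{\mrm{ren}}(0;E+i0)$ in~(\ref{eq:G1X10renX10}), along the lines of the branch-cut computations the paper carries out in the proof of Theorem~\ref{thm:existence3}, and this is not a triviality across the full range of $(\alpha,\beta)$ and $E$.
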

\begin{proof}
Consider the singular spectrum as a closed set of points $E_{n}\in\R$ that solve 
(\ref{eq:singularsp}) for $n=1,2,\ldots,N$, for some $N\in\N$. Let $\HS$, $\HP$, and $\HSC$
be the reducing subspaces of $\H_{0}$ corresponding to the singular, discontinuous, and
singular continuous parts of $H(\Gamma)$. Then $\HS=\HP\op\HSC$. For each function $f\in\HS$ ,
one finds functions $g\in\HP$ and $h\in\HSC$ such that $f=g+h$. Then the proof is
accomplished if one shows that $h=0$ for each such $f$.

Using the resolvent formula of $H(\Gamma)$, the integral representation
of the resolution of the identity $\E(\Gamma;\cdot)$ \cite[Lemma~XVIII.2.31]{Dunford88},
and the residue theorem, we find that
\begin{equation}
\E(\Gamma;\{E\})f=-\sum_{s}\sum_{n}\chi(\Gamma;E)
\Res_{z=E_{n}}((\Gamma-Q(z))^{-1}\braket{\Phi(E_{n}),f}_{0})_{s}\Phi_{s}(E_{n}+i0)
\label{eq:E}
\end{equation}
for arbitrary $f\in\H_{0}$. Here $\chi(\Gamma;\cdot)$ is the characteristic function of the singular spectrum.
When deriving (\ref{eq:E}) we used $\E(\{E\})=0$ for the resolution of the identity $\E$ of $H$.
We write $\Phi_{s}(E_{n}+i0)$ to ensure that
the right hand side of (\ref{eq:E}) exists for $E_{n}\geq-\Sigma$.
By Lemma~\ref{lem:eigenf}, one safely replaces $\Phi_{s}(E_{n}+i0)$
with $\Phi_{s}(E_{n})$ in the spectral measure
$\braket{f,\E(\Gamma;\{E\})f}_{0}$. For $E_{n}<-\Sigma$, 
$\Phi_{s}(E_{n}+i0)=\Phi_{s}(E_{n})$.

The characteristic function in (\ref{eq:E}) indicates for which points
$E\in\R$ the function $\E(\Gamma;\{E\})f$ is nonzero. These are
the points that belong to the singular spectrum. Since out task is to examine
the functions in $\HS$, we identify $E$ with an arbitrary $E_{n}$.
Equation~(\ref{eq:E}) is valid for an arbitrary $f\in\H_{0}$, and now we take $f\in\HS$.
For each $E_{n}$, there exists $f\in\HS$ such that $\E(\Gamma;\{E_{n}\})f=f$.
If otherwise, since $f\in\HS$, there must exist a Lebesgue null set $e\subset\R$
such that $e$ is absent in the singular spectrum and $\E(\Gamma;e)f=f$. But $\chi(\Gamma;E)=0$ 
for all $E\in e$, and hence $\E(\Gamma;e)f=0$ by (\ref{eq:E}); this contradicts the definition of $\HS$.
Next, for each $f\in\HS$, there exist $g\in\HP$ and $h\in\HSC$ such that $f=g+h$. By Lemma~\ref{lem:eigenf}, 
$g\in\spn\{\Phi_{s}(E_{n}+i0)\co n=1,\ldots,N\}$. Therefore,
since $\HSC$ is the orthogonal complement of $\HP\subseteq\HS$, one has that
$\braket{\Phi_{s}(E_{n}),h}_{0}=0$ for all $n=1,\ldots,N$.
But then, using (\ref{eq:E}), $\E(\Gamma;\{E_{n}\})h=0$.
Recalling that $\E(\Gamma;\{E_{n}\})f=f$ and $f=g+h$, it follows from the latter
that $\braket{h,\E(\Gamma;\{E_{n}\})f}_{0}=\norm{h}_{0}^{2}=0$.
Hence $f\in\HS\Rightarrow f=g\in\HP$, and the proof of the theorem is accomplished.
\end{proof}
It follows from Theorem~\ref{thm:sc} that the \textit{point spectrum} of $H(\Gamma)$ coincides with the
singular spectrum. Using the integral representation of the resolution of the identity and the resolvent formula,
one shows that, for $e$ in the set of Borel subsets of $\R$, and for $f\in\H_{0}$, the spectral measures 
of $H(\Gamma)$ and $H$ agree up to sets of Lebesgue measure zero. Since the absolutely 
continuous part of a measure is supported by the set of $E\in\R$ such that 
$\img\braket{f,(H(\Gamma)-E-i0_{+})^{-1}f}_{0}\neq0$ (see \eg \cite{Bruning08}), we conclude that
the \textit{absolutely continuous} parts of spectral measures of $H(\Gamma)$ and $H$ coincide. 
If we let $\HC$ and $\HA$
be the reducing subspaces of $\H_{0}$ corresponding to the continuous and absolutely continuous parts
of $H(\Gamma)$, then we arrive at the same conclusion by noting that $\HS=\HP$ and $\HA=\HS^{\bot}$,
hence $\HC=\HP^{\bot}=\HA$.
\section{Analytic properties of singular spectrum}
According to the results of the previous paragraph, the notions of singular spectrum and point spectrum
(eigenvalues) are used interchangeably. As a rule, we prefer the notion of singular spectrum to that of eigenvalues, 
because the singular spectrum consists of the singular points $E\in \R$ of the resolvent operator of $H(\Gamma)$
and because the singular points are solutions of equation (\ref{eq:singularsp}), which is our main target in the present 
section. 

Define
\begin{align}
\omega_{ s}:=&
4\pi(\tilde{\Gamma}_{ s s}+\Lambda_{ s}), \quad 
\gamma:=(4\pi\abs{ \tilde{\Gamma}_{+-} })^{2}, \quad
\tilde{\Gamma}_{ s s^{\prime}}:=
\frac{\Gamma_{ s s^{\prime}}}{\mathcal{N}_{ s}
\mathcal{N}_{ s^{\prime}}}, \nonumber \\
\Lambda_{ s}:=&
\re G_{ s}^{\mrm{ren}}(0;i)-\frac{1}{4\sqrt{2}\pi}\quad
(\omega_{ s},\Lambda_{ s}\in\R;\,s, s^{\prime}=\pm;\,\gamma\geq0).
\label{eq:gammaomega}
\end{align}
The matrix $\tilde{\Gamma}\equiv(\tilde{\Gamma}_{ s s^{\prime}})\equiv
\Bigl(\begin{smallmatrix}\tilde{\Gamma}_{++} & \tilde{\Gamma}_{+-} \\
\tilde{\Gamma}_{-+} & \tilde{\Gamma}_{--} \end{smallmatrix}\Bigr)$ is Hermitian. Let also
\begin{equation}
\xi(E):=\frac{1}{\beta}
\sqrt{ \frac{-E}{2}\left(1-
\sqrt{ 1-\left(\frac{\beta}{E} \right)^{2} } \right) }\quad
(E\in\R;\,\beta\geq0).
\label{eq:xif}
\end{equation}
When $\beta=0$, one takes the limit
$\beta\downarrow0$ on the right hand side: $\xi(E)=1/(2\sqrt{-E})$.
Using (\ref{eq:gammaomega}) and (\ref{eq:xif}), (\ref{eq:singularsp}) obeys the following form:
\begin{equation}
\gamma=\prod_{s}
\left(\omega_{ s}\pm\frac{1}{2\xi(E)}-
\left(\frac{\alpha}{2}-\frac{ s\beta}{\alpha}\right)
\mrm{artanh}(\alpha\xi(E))\right).
\label{eq:sing}
\end{equation}
The upper (resp. lower) sign is taken if $E<\beta$ (resp. $E\geq\beta$).
Owing to $\mrm{artanh}$, equation~(\ref{eq:sing}) is transcendental and does not possess analytic solutions.
For the reason that we are unable to solve equation (\ref{eq:sing}) 
as it stands, we examine (\ref{eq:sing}) in three specific cases:
\begin{enumerate}[\upshape (A)]
\item\label{item:A}
Case without spin-orbit coupling: $\alpha=0$, $\Sigma=\beta\geq0$.
\item\label{item:B}
Case with small spin-orbit coupling: $0<\alpha<\sqrt{2\beta}$ small, $\Sigma=\beta>0$.
\item\label{item:C}
Case with large spin-orbit coupling: $0<\sqrt{2\beta}\leq\alpha$, $0<\beta\leq\Sigma\leq1$,
$E\geq-\Sigma$.
\end{enumerate}
Although the $Q$-function itself is valid for all $\alpha,\beta\geq0$,
the limitation on the parameters is caused by the series representation for the
free Green function (\ref{eq:frg}). In fact, the validity of (\ref{eq:Phizorth}) is governed by (\ref{item:a})--(\ref{item:c}).
In particular, putting $z=i$ in (\ref{eq:Phizorth}), the condition~$\Sigma\leq1$ follows from
(\ref{item:a})--(\ref{item:c}); this latter condition also ensures the existence of the normalization constant
$\mathcal{N}_{s}>0$.

For $\alpha$ zero or arbitrarily small (Cases (\ref{item:A}) or (\ref{item:B}), respectively), 
conditions~(\ref{item:a})--(\ref{item:c}) can be safely removed without affecting the absolute convergence of the series 
that defines the free Green function. This follows from the fact that for $\alpha>0$ small, the confluent Horn 
function, which defines the functions $G_{j}$ in (\ref{eq:frg}), can be analytically continued to the
region $\beta\geq1$; see \cite{Jursenas14} for more details.
As a result one writes $\Sigma\geq0$ in (\ref{item:A}) and $\Sigma>0$ in (\ref{item:B}).
For $E\geq-\Sigma$ in (\ref{item:C}), the eigenfunctions in Lemma~\ref{lem:eigenf}
are restricted to (\ref{item:a}) or (\ref{item:c}).
\subsection{Case without spin-orbit coupling}
Using the asymptotic formula
\begin{equation}
\left(\frac{\alpha}{2}-\frac{ s\beta}{\alpha}\right)
\mrm{artanh}(\alpha\xi(E))=
- s\beta\xi(E)+
\alpha^{2}\xi(E)\left(\frac{1}{2}-
\frac{ s\beta\xi(E)^{2}}{3}\right)+O(\alpha^{4})
\label{eq:sr}
\end{equation}
as $\alpha\downarrow0$, and picking the first term from the right hand side of (\ref{eq:sr}),
we rewrite (\ref{eq:sing}) as follows:
\begin{equation}
\gamma=\prod_{ s}
\left(\omega_{ s}\pm\frac{1}{2\xi(E)}
+ s\beta\xi(E)\right).
\label{eq:singA}
\end{equation}
It takes little effort to conclude from (\ref{eq:xif}) and (\ref{eq:singA}) the following result.
\begin{thm}\label{thm:without}
Let $\alpha=0$ and $0\leq\beta<\infty$. The part of the singular spectrum of $H(\Gamma)$ below the
threshold $-\beta$ is given by the set
$\{E<-\beta\co \gamma=\prod_{s}(\omega_{s}+\sqrt{s\beta-E})\}$. For $\beta>0$, the singular spectrum
embedded in the continuous spectrum is described by the following singletons: $\{-\beta\}$ iff
$\gamma=(\omega_{+}+\sqrt{2\beta})\omega_{-}$; $\{\beta-\omega_{+}^{2}\}$ iff
$\gamma=0$ and $-\sqrt{2\beta}<\omega_{+}<0$; $\{\beta\}$ iff $\gamma=\omega_{-}=0$.
For $\beta=0$, there are no embedded eigenvalues.
\end{thm}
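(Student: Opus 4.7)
The proof splits into a below-threshold analysis and an embedded-eigenvalue analysis, both starting from equation~(\ref{eq:singA}).

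For $E<-\beta$, $\xi(E)$ in~(\ref{eq:xif}) is real positive. Writing $t:=-E$, I would use $(\beta\xi)^{2}=(t-\sqrt{t^{2}-\beta^{2}})/2$ and $(2\xi)^{-2}=(t+\sqrt{t^{2}-\beta^{2}})/2$ together with the nested-radical identity $\sqrt{t\pm\sqrt{t^{2}-\beta^{2}}}=\sqrt{(t+\beta)/2}\pm\sqrt{(t-\beta)/2}$ to obtain
\[\tfrac{1}{2\xi(E)}+s\beta\xi(E)=\sqrt{s\beta-E},\qquad s=\pm,\]
which converts (\ref{eq:singA}) with the upper sign directly into the advertised transcendental equation.

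For the embedded part with $\beta>0$, Lemma~\ref{lem:eigenf} dictates that I evaluate (\ref{eq:singA}) at the upper-half-plane boundary value $\xi(E+i0^{+})$, continued analytically from $z<-\beta$. The identity just derived continues to the complex setting and reduces (\ref{eq:singA}) to
\[\gamma=(\omega_{+}+\sqrt{\beta-E})(\omega_{-}-i\sqrt{\beta+E})\qquad(-\beta\leq E<\beta,\text{ upper sign})\]
and
\[\gamma=(\omega_{+}+i\sqrt{E+\beta})(\omega_{-}+i\sqrt{E-\beta})\qquad(E\geq\beta,\text{ lower sign}).\]
Separating real and imaginary parts: in the first, the imaginary factor $\sqrt{\beta+E}(\omega_{+}+\sqrt{\beta-E})=0$ forces either $E=-\beta$, with the real part giving $\gamma=\omega_{-}(\omega_{+}+\sqrt{2\beta})$, or $\omega_{+}=-\sqrt{\beta-E}<0$, i.e.\ $E=\beta-\omega_{+}^{2}$ with $\gamma=0$ and $-\sqrt{2\beta}<\omega_{+}<0$ to keep $E\in(-\beta,\beta)$. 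In the second, $\omega_{+}\sqrt{E-\beta}+\omega_{-}\sqrt{E+\beta}=0$ forces $\omega_{+}\omega_{-}<0$ for any $E>\beta$, so the real part $\omega_{+}\omega_{-}-\sqrt{E^{2}-\beta^{2}}=\gamma$ is strictly negative, violating $\gamma\geq0$; only $E=\beta$ survives, producing $\omega_{-}=0$ and $\gamma=0$. This enumerates the three singletons precisely as claimed.

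For $\beta=0$, the boundary-value manipulation on $E>0$ yields $\gamma=(\omega_{+}+i\sqrt{E})(\omega_{-}+i\sqrt{E})$; the imaginary part forces $\omega_{+}+\omega_{-}=0$ and the real part then $-\omega_{+}^{2}-E=\gamma\geq0$, contradicting $E>0$. The threshold $E=0$ is ruled out separately: using (\ref{eq:Phizorth}) with $\alpha=\beta=0$ (where $G_{s}^{\mrm{ren}}(0;z)\equiv0$), I would compute $\norm{\Phi_{s}(i\epsilon)}_{0}^{2}=\mathcal{N}_{s}^{2}/(4\sqrt{2}\pi\sqrt{\epsilon})\to\infty$, so the strong-limit clause of Lemma~\ref{lem:eigenf} fails and $E=0$ is not an eigenvalue. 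The principal obstacle is pinning down the correct branch of $\xi(E+i0^{+})$ in the regions $(-\beta,\beta)$ and $(\beta,\infty)$: continuation through the upper $z$-half-plane must give $\img\xi(E+i0^{+})>0$ throughout $(-\beta,\infty)$, which in particular explains why the sign in (\ref{eq:singA}) switches at $E=\beta$. Once this branch assignment is justified, the remaining steps reduce to the algebraic real/imaginary-part bookkeeping above.
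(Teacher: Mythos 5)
Your proposal is correct and is exactly the computation the paper leaves to the reader (the paper's "proof" of Theorem~\ref{thm:without} is the single sentence that the result follows from (\ref{eq:xif}) and (\ref{eq:singA}) with little effort): the identity $\tfrac{1}{2\xi(E)}+s\beta\xi(E)=\sqrt{s\beta-E}$ for $E<-\beta$ and the real/imaginary-part bookkeeping on $[-\beta,\infty)$ reproduce all four assertions, and your separate treatment of $E=0$ when $\beta=0$ via the divergence of $\norm{\Phi_{s}(i\epsilon)}_{0}$ correctly supplies the one point the equation $\det(\Gamma-Q(E))=0$ alone does not settle. The branch question you flag as the "principal obstacle" is actually immaterial: the paper's principal-branch $\xi$ has $\img\xi<0$ on $(-\beta,0)$ (the conjugate of your choice), but since $\gamma$ and $\omega_{s}$ are real, replacing $\xi$ by $\bar{\xi}$ conjugates every factor in (\ref{eq:singA}) and leaves the real solution set unchanged.
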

It follows that $H(\Gamma)$ may have maximum three eigenvalues in the interval $[-\beta,\beta]$.
With regard to \cite{Cacciapuoti09}, assume that $\omega_{+}=\omega_{-}\equiv\omega$.
If $\gamma=0$, then Theorem~2 in \cite{Cacciapuoti09} says that the point spectrum is empty for 
$\omega\geq0$, while it follows from Theorem~\ref{thm:without} that the point spectrum consists of the
points $\pm\beta$ when $\omega=0$. If $-\sqrt{2\beta}<\omega<0$ and $\gamma>0$, then Theorem~3 in 
\cite{Cacciapuoti09} says that \textit{there exists} $\gamma_{0}>0$ such that, for $0<\gamma<\gamma_{0}$,
$H(\Gamma)$ does not have embedded eigenvalues. By Theorem~\ref{thm:without}, however, 
there are no embedded eigenvalues \textit{for all} $\gamma>0$; recall that $\gamma=\infty$ and 
$\abs{\omega}=\infty$ together suit $H$.
\subsection{Case with small spin-orbit coupling}
Here we study the behavior of eigenvalues of $H(\Gamma)\equiv H(\Gamma,\alpha)$
in the limit $\alpha\downarrow0$. The main conclusion following
from the results posted below is that, for arbitrarily small but nonzero $\alpha$, the singular spectrum is empty above the 
threshold $-\beta$. Our strategy is to expand $\gamma$ and $\omega_{s}$ in (\ref{eq:gammaomega})
as a series with respect to $\alpha\downarrow0$, and then, using the asymptotic formula (\ref{eq:sr}),
to solve (\ref{eq:sing}).

Using (\ref{eq:phinorm*}), we have that 
\begin{equation}
\mathcal{N}_{s}=
\mathcal{N}_{s}^{(0)}-\alpha^{2}
\mathcal{N}_{s}^{(1)}+O(\alpha^{4})
\label{eq:Nseries}
\end{equation}
as $\alpha\downarrow0$, where 
\begin{subequations}\label{eq:Nseries01}
\begin{align}
\mathcal{N}_{s}^{(0)}:=&
2\sqrt[4]{2}\sqrt{\pi}
\sqrt[4]{\sqrt{1+\beta^{2}}+s\beta}, \\
\mathcal{N}_{s}^{(1)}:=&
\frac{\sqrt{\pi}}{6\sqrt[4]{2}\beta^{2}}
\left(3\beta+s\left(1-\sqrt{1+\beta^{2}}\right)\right)
\sqrt[4]{2+\beta^{2}-2\sqrt{1+\beta^{2}}} 
\left(\sqrt{1+\beta^{2}}+s\beta\right)^{\frac{3}{4}}.
\end{align}
\end{subequations}
Using (\ref{eq:G1X10renX10}) and (\ref{eq:gammaomega}),
\begin{equation}
\Lambda_{s}=\Lambda_{s}^{(0)}+
\alpha^{2}\Lambda_{s}^{(1)}+O(\alpha^{4})
\label{eq:LambaSeries}
\end{equation}
as $\alpha\downarrow0$, where 
\begin{subequations}\label{eq:LambdaSeries01}
\begin{align}
\Lambda_{s}^{(0)}:=&
-\frac{1}{8\pi}\left(\sqrt{ \sqrt{1+\beta^{2}}+1 }
+s\sqrt{ \sqrt{1+\beta^{2}}-1 }\right), \\
\Lambda_{s}^{(1)}:=&
\frac{1}{48\pi\beta^{2}}
\left(3\beta-s\left(1-\sqrt{1+\beta^{2}}\right)\right)
\sqrt[4]{2+\beta^{2}-2\sqrt{1+\beta^{2}}}.
\end{align}
\end{subequations}
If we let
\begin{align}
\omega_{s}^{(0)}:=&
4\pi(\tilde{\Gamma}_{ss}^{(0)}+\Lambda_{s}^{(0)}), \quad
\omega_{s}^{(1)}:=4\pi(\eta_{ss}\tilde{\Gamma}_{ss}^{(0)}
+\Lambda_{s}^{(1)}), \nonumber \\
\gamma^{(0)}:=&(4\pi\abs{ \tilde{\Gamma}_{+-}^{(0)} })^{2}, \quad
\eta_{ss^{\prime}}:=
\frac{ \mathcal{N}_{s}^{(1)} }{ \mathcal{N}_{s}^{(0)} }+
\frac{ \mathcal{N}_{s^{\prime}}^{(1)} }{ 
\mathcal{N}_{s^{\prime}}^{(0)} }, \quad 
\tilde{\Gamma}_{ss^{\prime}}^{(0)}:=
\frac{ \Gamma_{ss^{\prime}} }{ 
\mathcal{N}_{s}^{(0)}\mathcal{N}_{s^{\prime}}^{(0)} }
\label{eq:gammaomega0}
\end{align}
($\omega_{s}^{(0)},\omega_{s}^{(1)},\Lambda_{s}^{(0)},\Lambda_{s}^{(1)}\in\R$; 
$\eta_{ss^{\prime}}>0$; $s,s^{\prime}=\pm$; $\gamma^{(0)}\geq0$), then 
\begin{align}
\omega_{s}=&\omega_{s}^{(0)}+\alpha^{2}\omega_{s}^{(1)}+O(\alpha^{4}), 
\label{eq:omegaSeries} \\
\gamma=&\gamma^{(0)}(1+2\alpha^{2}\eta_{+-})+O(\alpha^{4}),
\label{eq:omegaSeriesB} \\
\tilde{\Gamma}_{ss^{\prime}}=&
\tilde{\Gamma}_{ss^{\prime}}^{(0)}
(1+\alpha^{2}\eta_{ss^{\prime}})+O(\alpha^{4}) 
\label{eq:omegaSeriesA}
\end{align}
as $\alpha\downarrow0$ ($s,s^{\prime}=\pm$). It is clear that $\gamma^{(0)}$,
$\omega_{s}^{(0)}$ coincide with $\gamma$, $\omega_{s}$ in
Theorem~\ref{thm:without}. Put (\ref{eq:omegaSeries}), (\ref{eq:omegaSeriesB}), and (\ref{eq:sr}) in 
equation~(\ref{eq:sing}) and get that 
\begin{align}
\gamma^{(0)}=&
\prod_{s}
\left(\omega_{s}^{(0)}\pm\frac{1}{2\xi(E)}
+s\beta\xi(E)\right) \nonumber \\
&+\alpha^{2}\left(\sum_{s}
q_{-s}(E)
\left(\omega_{s}^{(0)}\pm\frac{1}{2\xi(E)}
+s\beta\xi(E)\right)-2\eta_{+-}\gamma^{(0)}\right) 
+O(\alpha^{4})
\label{eq:singB}
\end{align}
as $\alpha\downarrow0$. Here
\begin{equation}
q_{s}(E):=\omega_{s}^{(1)}-\xi(E)(\tfrac{1}{2}-\tfrac{1}{3}s\beta\xi(E)^{2}).
\label{eq:qsigma}
\end{equation}
When $\alpha=0$, (\ref{eq:singB}) reduces to (\ref{eq:singA}). We look for the solutions of
the form $E=E^{(0)}+\epsilon$ for some small $\epsilon=\epsilon(\alpha)$;
$E^{(0)}\in \sing H(\Gamma,0)$ is in the singular spectrum of
$H(\Gamma,0)$, which is described in Theorem~\ref{thm:without}. It follows that $\epsilon(0)=0$.

Let us pick $\Gamma\equiv\Gamma^{\circ}$
such that $-\beta\in\sing H(\Gamma,0)$ and 
$A\Gamma_{++}+B\Gamma_{--}+C=0$, where 
\begin{align*}
A:=&\mathcal{N}_{-}^{(0)\,2}
(\Lambda_{-}^{(1)}-\eta_{--}\Lambda_{-}^{(0)}), \\
B:=&\mathcal{N}_{+}^{(0)\,2}
\left(\Lambda_{+}^{(1)}-\eta_{++}\left(\Lambda_{+}^{(0)}+\frac{\sqrt{2\beta}}{4\pi}\right)\right), \\
C:=&(\mathcal{N}_{+}^{(0)}\mathcal{N}_{-}^{(0)})^{2}
\left(\Lambda_{-}^{(0)}\Lambda_{+}^{(1)}+\left(\Lambda_{+}^{(0)}+\frac{\sqrt{2\beta}}{4\pi}\right)
\left(\Lambda_{-}^{(1)}-(\eta_{++}+\eta_{--})\Lambda_{-}^{(0)}\right)\right).
\end{align*}
\begin{lem}\label{lem:existence}
Let $0<\alpha<\sqrt{2\beta}$ be arbitrarily small and $\beta<\infty$. 
The following holds:
$-\beta\in\sing H(\Gamma)$ iff $\Gamma=\Gamma^{\circ}$;
$-\beta+\epsilon\notin\sing H(\Gamma)$ $(\epsilon\in\R\backslash\{0\}\;\text{small})$;
$\beta+\epsilon\notin\sing H(\Gamma)$ $(\epsilon\in\R\;\text{small})$.
\end{lem}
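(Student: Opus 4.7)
I plan to substitute $E=-\beta+\epsilon$ and $E=\beta+\epsilon$ into expansion (\ref{eq:singB}) and match coefficients in powers of $\alpha^2$ and a suitable fractional power of $\epsilon$. Since $\xi(-\beta)=1/\sqrt{2\beta}$, the two factors in the product on the right-hand side of (\ref{eq:singB}) (upper sign) admit the expansions
\begin{align*}
\omega_+^{(0)}+\tfrac{1}{2\xi(E)}+\beta\xi(E)&=(\omega_+^{(0)}+\sqrt{2\beta})+O(\epsilon),\\
\omega_-^{(0)}+\tfrac{1}{2\xi(E)}-\beta\xi(E)&=\omega_-^{(0)}+\sqrt{-\epsilon}+O(\epsilon),
\end{align*}
where $\sqrt{-\epsilon}$ is real for $\epsilon<0$ and purely imaginary for $\epsilon>0$ (reflecting the square-root branch of $\xi$ at $-\beta$); the analogous expansion at $E=\beta$ (lower sign) uses $\xi(\beta)=i/\sqrt{2\beta}$.

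For the first assertion I set $\epsilon=0$ and match (\ref{eq:singB}) order by order in $\alpha^2$. The $O(\alpha^0)$ equation is $\gamma^{(0)}=(\omega_+^{(0)}+\sqrt{2\beta})\omega_-^{(0)}$, i.e.\ the condition $-\beta\in\sing H(\Gamma,0)$ from Theorem~\ref{thm:without}. Substituting it into the $O(\alpha^2)$ equation yields
\[(\omega_+^{(0)}+\sqrt{2\beta})[q_-(-\beta)-2\eta_{+-}\omega_-^{(0)}]+q_+(-\beta)\omega_-^{(0)}=0.\]
The crucial algebraic observation is that the coefficient of the bilinear term $\Gamma_{++}\Gamma_{--}$ is proportional to $\eta_{++}+\eta_{--}-2\eta_{+-}$, which vanishes identically by the definition (\ref{eq:gammaomega0}). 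Consequently the equation is linear in the diagonal entries of $\Gamma$; substituting (\ref{eq:Nseries01}), (\ref{eq:LambdaSeries01}) together with $q_\pm(-\beta)$ from (\ref{eq:qsigma}) evaluated at $\xi=1/\sqrt{2\beta}$, and collecting terms, reproduces the prescribed relation $A\Gamma_{++}+B\Gamma_{--}+C=0$. Combined with the $O(\alpha^0)$ condition, this is exactly $\Gamma=\Gamma^\circ$.

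For the second assertion take $\epsilon\neq 0$ small. If $\epsilon>0$ the factor $\omega_-^{(0)}+i\sqrt{\epsilon}$ is genuinely complex, so splitting (\ref{eq:singB}) (whose left-hand side is real) into real and imaginary parts gives the imaginary-part equation $(\omega_+^{(0)}+\sqrt{2\beta})\sqrt{\epsilon}=O(\alpha^2)$, forcing $\epsilon=O(\alpha^4)$, which lies below the resolution of the expansion. If $\epsilon<0$, the real equation, after subtracting the $\epsilon=0$ contribution, reads $(\omega_+^{(0)}+\sqrt{2\beta})\sqrt{-\epsilon}=c(\Gamma)+O(\alpha^2)$ with $c(\Gamma):=\gamma^{(0)}-(\omega_+^{(0)}+\sqrt{2\beta})\omega_-^{(0)}$: for generic $\Gamma$ one has $c(\Gamma)=O(1)$ and $\epsilon$ is not small, while $c(\Gamma)=0$ reduces to the first assertion and forces $\epsilon=0$. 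The third assertion follows analogously at $E=\beta+\epsilon$: the two factors are $(\omega_+^{(0)}+i\sqrt{2\beta})$ and $\omega_-^{(0)}+O(\sqrt{|\epsilon|})$, and reality of $\gamma^{(0)}$ at $\epsilon=0$ already requires $\omega_-^{(0)}=\gamma^{(0)}=0$; the $O(\alpha^2)$ correction (complex because $\xi(\beta)\in i\R$) adds an independent imaginary constraint incompatible with this in the regime $0<\alpha<\sqrt{2\beta}$, and the parallel analysis for $\epsilon\neq 0$ produces no real solution either.

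The main obstacle is the bookkeeping in paragraph~2 to identify the $O(\alpha^2)$ linear relation with the prescribed $A\Gamma_{++}+B\Gamma_{--}+C=0$; this reduction is routine but tedious and hinges on the identity $\eta_{++}+\eta_{--}=2\eta_{+-}$. A subtler issue, handled in paragraph~3 near $E=\beta$, is the real/imaginary splitting in the presence of the inherently complex $\xi(\beta)=i/\sqrt{2\beta}$, where one must carefully track how the branch of $\sqrt{-\epsilon}$ interacts with the imaginary contributions of $\xi$ to preclude any small real $\epsilon$.
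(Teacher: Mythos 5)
Your overall strategy --- substituting $E=\mp\beta+\epsilon$ into (\ref{eq:singB}), expanding in $\sqrt{\abs{\epsilon}}$ and $\alpha^{2}$, and separating real and imaginary parts --- is the paper's, and your first assertion is handled exactly as in the paper, including the cancellation of the bilinear term via $\eta_{++}+\eta_{--}=2\eta_{+-}$. The gap is in the second assertion: your dichotomy (the imaginary part forces $(\omega_{+}^{(0)}+\sqrt{2\beta})\sqrt{\epsilon}=O(\alpha^{2})$ for $\epsilon>0$, the real part forces $(\omega_{+}^{(0)}+\sqrt{2\beta})\sqrt{-\epsilon}=c(\Gamma)+O(\alpha^{2})$ for $\epsilon<0$) tacitly assumes $\omega_{+}^{(0)}+\sqrt{2\beta}\neq 0$. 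For the admissible matrices $\Gamma$ with $\omega_{+}^{(0)}=-\sqrt{2\beta}$ and $\gamma^{(0)}=0$, the coefficient of $\sqrt{\abs{\epsilon}}$ vanishes identically at leading order, the constraint on $\epsilon$ disappears, and your argument no longer excludes eigenvalues near $-\beta$. The paper closes precisely this case: pushing to the next order gives $q_{+}(-\beta)=0$, hence $\omega_{+}^{(1)}=(3\sqrt{2\beta})^{-1}$, which combined with $\omega_{+}^{(0)}=-\sqrt{2\beta}$ eliminates $\Gamma$ altogether and produces the $\beta$-only condition (\ref{eq:cnd0}); that condition fails for every $\beta$ (its left-hand side has maximum $\approx-0.14874$). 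Without this step the second assertion is unproved. (Note also that ``$\epsilon=O(\alpha^{4})$, below the resolution of the expansion'' only says your expansion cannot detect such $\epsilon$; the paper instead derives an outright contradiction.)

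For the third assertion your sketch is directionally right but the ``independent imaginary constraint'' is never written down. The paper makes it explicit: with $\gamma^{(0)}=\omega_{-}^{(0)}=0$, the $O(\alpha^{2})$ term at $\epsilon=0$ yields the system $3\omega_{-}^{(1)}\omega_{+}^{(0)}=-1$ and $6\beta\omega_{-}^{(1)}=\omega_{+}^{(0)}$, whence $18\beta\,\omega_{-}^{(1)\,2}=-1$, which has no real solution. You need to carry the computation to this point (and to the analogous conclusion for $\epsilon\neq0$) rather than asserting incompatibility.
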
 
\begin{proof}
By (\ref{eq:xif}), $\xi(-\beta)=1/\sqrt{2\beta}$. Put $E=-\beta$ in (\ref{eq:singB}) 
and get a zero-valued quadratic polynomial in $\alpha>0$. The monomial of
degree $0$ is precisely the condition in Theorem~\ref{thm:without} ensuring 
that $-\beta\in\sing H(\Gamma,0)$. The monomial of degree $1$ is absent, and the monomial of degree $2$ reads
\begin{equation}
2\eta_{+-}\gamma^{(0)}=
q_{-}(-\beta)(\omega_{+}^{(0)}+\sqrt{2\beta})+q_{+}(-\beta)\omega_{-}^{(0)},\quad
q_{s}(-\beta)=\omega_{s}^{(1)}-\frac{1-s/3}{2\sqrt{2\beta}}
\label{eq:deg1}
\end{equation}
($s=\pm$). Using (\ref{eq:gammaomega0}),
\begin{equation}
\omega_{s}^{(1)}=\eta_{ss}\omega_{s}^{(0)}+4\pi(\Lambda_{s}^{(1)}-\eta_{ss}\Lambda_{s}^{(0)}).
\label{eq:sssss2}
\end{equation}
Let $x:=\omega_{+}^{(0)}+\sqrt{2\beta}$, $y:=\omega_{-}^{(0)}$. It follows from 
(\ref{eq:deg1}) and (\ref{eq:sssss2}) that
\begin{align}
2\eta_{+-}\gamma^{(0)}=&(\eta_{++}+\eta_{--})xy-4\pi(\eta_{--}\Lambda_{-}^{(0)}-\Lambda_{-}^{(1)})x
\nonumber \\
&-4\pi\left(\eta_{++}\left(\Lambda_{+}^{(0)}+
\frac{\sqrt{2\beta}}{4\pi}\right)-\Lambda_{+}^{(1)}\right)y.
\label{eq:sssss4}
\end{align}
Since $\eta_{++}+\eta_{--}=2\eta_{+-}$ and $-\beta\in\sing H(\Gamma,0)\Rightarrow\gamma^{(0)}=xy$, 
it follows from (\ref{eq:sssss4}) that $\mathcal{N}_{+}^{(0)\,2}Ax+\mathcal{N}_{-}^{(0)\,2}By=0$, which is
equivalent to $A\Gamma_{++}+B\Gamma_{--}+C=0$. This proves the first statement of the lemma.

Let $E=-\beta+\epsilon$, $\epsilon>0$ small. By (\ref{eq:xif}) and (\ref{eq:qsigma}),
\[\xi(E)=\frac{1}{\sqrt{2\beta}}\left(1-i\sqrt{\frac{\epsilon}{2\beta}}\right)+O(\epsilon),\quad
q_{s}(E)=q_{s}(-\beta)+i\sqrt{\epsilon}\frac{1- s}{4\beta}+O(\epsilon)\]
($s=\pm$). Put these expressions in (\ref{eq:singB}) and get that 
\begin{equation}
\gamma^{(0)}=0, \quad \omega_{+}^{(0)}=-\sqrt{2\beta}, \quad q_{+}(-\beta)=0. 
\label{eq:cnd}
\end{equation}
Then, by the second equality in
(\ref{eq:deg1}), $\omega_{+}^{(1)}=(3\sqrt{2\beta})^{-1}$. Using the latter and (\ref{eq:sssss2}),
\begin{equation}
4\pi(\Lambda_{+}^{(1)}-\eta_{++}\Lambda_{+}^{(0)})-(3\sqrt{2\beta})^{-1}-\eta_{++}\sqrt{2\beta}=0,
\label{eq:cnd0}
\end{equation}
which is false, since by (\ref{eq:Nseries01}), (\ref{eq:LambdaSeries01}), (\ref{eq:gammaomega0}),
the function on the left has a single maximum $\approx-0.14874$ at $\beta\approx1.00553$.
When $\epsilon<0$, (\ref{eq:singB}) also leads to (\ref{eq:cnd}), hence $-\beta+\epsilon\notin\sing H(\Gamma)$
for small nonzero $\epsilon$.

Next $E=\beta+\epsilon$ implies that $\gamma^{(0)}=\omega_{-}^{(0)}=0$ (Theorem~\ref{thm:without}).
When $\epsilon\geq0$, we have that 
\[\xi(E)=\frac{i}{\sqrt{2\beta}}\left(1-\sqrt{\frac{\epsilon}{2\beta}}\right)+O(\epsilon)\]
and
\[q_{s}(E)=q_{s}(\beta)+i\sqrt{\epsilon}\frac{1+s}{4\beta}+O(\epsilon),\quad
q_{s}(\beta)=\omega_{s}^{(1)}-i\frac{1+s/3}{2\sqrt{2\beta}} \]
($s=\pm$). Put these expressions in (\ref{eq:singB}) and deduce that (\ref{eq:singB}) fails for
$\epsilon>0$. When $\epsilon=0$, we obtain the system
$3\omega_{-}^{(1)}\omega_{+}^{(0)}=-1$ and $6\beta\omega_{-}^{(1)}=\omega_{+}^{(0)}$.
The system does not have solutions $\omega_{+}^{(0)}$, $\omega_{-}^{(1)}$ in $\R$, hence 
$\beta+\epsilon\notin\sing H(\Gamma)$ for $\epsilon\geq0$ small. Applying the above procedure
for $\epsilon<0$ we arrive at the same conclusion.
\end{proof}
As previously, let $E=E^{(0)}+\epsilon(\alpha)$. By Lemma~\ref{lem:existence},
$E^{(0)}\neq\pm\beta$ unless $\alpha=0$. We also conclude from Theorem~\ref{thm:without}
and Lemma~\ref{lem:existence} that $E<\beta$. Next we look for $\epsilon(\alpha)$ of the form
$\alpha E^{(1)}+\alpha^{2}E^{(2)}+\alpha^{3}E^{(3)}$ for some
$E^{(j)}\in\R$ ($j=1,2,3$). Put this $E$ in (\ref{eq:xif}):
\begin{equation}
\xi(E)=\sum_{j=0}^{3}\alpha^{j}\xi_{j}+O(\alpha^{4})
\label{eq:xiSeries}
\end{equation}
as $\alpha\downarrow0$, where $\xi_{0}:=\xi(E^{(0)})$ and 
\begin{subequations}\label{eq:xiSeries0123}
\begin{align}
\xi_{1}:=&
-\frac{ \xi_{0}E^{(1)} }{ 2(E^{(0)}+2\beta^{2}\xi_{0}^{2}) }, 
\label{eq:xiSeries0123-1} \\
\xi_{2}:=&
\frac{\xi_{0}}{8}\frac{ E^{(1)\,2}(2\xi_{0}^{2}E^{(0)}-1)
+8\xi_{0}^{2}E^{(2)}
(\beta^{2}-E^{(0)\,2}) }{ (2\xi_{0}^{2}E^{(0)}+1)
(\beta^{2}-E^{(0)\,2}) }, 
\label{eq:xiSeries0123-2} \\
\xi_{3}:=&
-\frac{1}{ 32\xi_{0}(\beta^{2}-E^{(0)\,2})^{3} }\bigl\{
8\beta^{4}[E^{(3)}+\xi_{0}^{2}
(E^{(1)}E^{(2)}+2E^{(0)}E^{(3)})] \nonumber \\
&-E^{(0)\,2}
[8E^{(0)}E^{(1)}E^{(2)}
(1+\xi_{0}^{2}E^{(0)})-
8E^{(0)\,2}E^{(3)}
(1+2\xi_{0}^{2}E^{(0)}) \nonumber \\
&-E^{(1)\,3}(3+2\xi_{0}^{2}E^{(0)})]
+\beta^{2}[8E^{(0)}E^{(1)}E^{(2)} 
-16E^{(0)\,2}E^{(3)}(1+2\xi_{0}^{2}E^{(0)}) \nonumber \\
&+E^{(1)\,3}(1+6\xi_{0}^{2}E^{(0)})]\bigr\}.
\label{eq:xiSeries0123-3}
\end{align}
\end{subequations}
Notice that each denominator in (\ref{eq:xiSeries0123}) is nonzero since $E^{(0)}\neq\pm\beta$. 
Put (\ref{eq:xiSeries}) in (\ref{eq:qsigma}) and get that
\begin{equation}
q_{s}(E)=\sum_{j=0}^{3}\alpha^{j}q_{s}^{(j)}+O(\alpha^{4})\quad(s=\pm)
\label{eq:qSeries}
\end{equation}
as $\alpha\downarrow0$, where $q_{s}^{(0)}:=q_{s}(E^{(0)})$ and
\begin{subequations}\label{eq:qSeries0123}
\begin{align}
q_{s}^{(1)}:=&
-\xi_{1}(\tfrac{1}{2}-s\beta\xi_{0}^{2}), 
\label{eq:qSeries0123-1} \\
q_{s}^{(2)}:=&
-\xi_{2}(\tfrac{1}{2}-s\beta\xi_{0}^{2})+s\beta\xi_{0}\xi_{1}^{2}, 
\label{eq:qSeries0123-2} \\
q_{s}^{(3)}:=&
-\xi_{3}(\tfrac{1}{2}-s\beta\xi_{0}^{2})+\tfrac{1}{3}s\beta\xi_{1}(\xi_{1}^{2}+6\xi_{0}\xi_{2}).
\label{eq:qSeries0123-3}
\end{align}
\end{subequations}
Put (\ref{eq:xiSeries}) and (\ref{eq:qSeries}) in (\ref{eq:singB}), and then collect
the terms with the same powers $<4$ of $\alpha$. Since a cubic polynomial in $\alpha>0$
is zero, set each monomial to zero: A zero-valued monomial of degree $0$ coincides with (\ref{eq:singA}),
and the rest zero-valued monomials of degree $1,2,3$ are of the form
\begin{subequations}\label{eq:systemB}
\begin{align}
0=&
\xi_{1}\sum_{s}\left(\frac{1}{2\xi_{0}^{2}}+ s\beta\right)
\left(\omega_{ s}^{(0)}+\frac{1}{2\xi_{0}}+ s\beta\xi_{0}\right), 
\label{eq:systemB-1} \\
0=&
-2\eta_{+-}\gamma^{(0)}+
\xi_{1}^{2}\prod_{s}\left(\frac{1}{2\xi_{0}^{2}}+ s\beta\right) \nonumber \\
&+\sum_{s}\left(\omega_{ s}^{(0)}+\frac{1}{2\xi_{0}}+ s\beta\xi_{0}\right)
\left(q_{- s}^{(0)}+\frac{ \xi_{1}^{2}-\xi_{0}\xi_{2} }{ 2\xi_{0}^{3} }- s\beta\xi_{2}\right), 
\label{eq:systemB-2} \\
0=&
\sum_{s}\left(\xi_{1}\left(-\frac{1}{2\xi_{0}^{2}}+ s\beta\right)
\left(q_{- s}^{(0)}+\frac{ \xi_{1}^{2}-\xi_{0}\xi_{2} }{ 2\xi_{0}^{3} }
- s\beta\xi_{2}\right)\right. \nonumber \\
&\left.+\left(\omega_{ s}^{(0)}+\frac{1}{2\xi_{0}}+ s\beta\xi_{0}\right)
\left(q_{- s}^{(1)}-\frac{ \xi_{1}^{3}-2\xi_{0}\xi_{1}\xi_{2}+\xi_{0}^{2}\xi_{3} }{ 2\xi_{0}^{4} }
- s\beta\xi_{3}\right)\right). 
\label{eq:systemB-3}
\end{align}
\end{subequations}
Equations~(\ref{eq:systemB-1}) and (\ref{eq:systemB-2}) imply that
\begin{prop}\label{prop:lambda1}
$E^{(1)}=0$.
\end{prop}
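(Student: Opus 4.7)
The plan is to prove $E^{(1)}=0$ by showing $\xi_{1}=0$; by (\ref{eq:xiSeries0123-1}) these are equivalent, because Lemma~\ref{lem:existence} gives $E^{(0)}\ne\pm\beta$ and a short calculation from (\ref{eq:xif}) yields $E^{(0)}+2\beta^{2}\xi_{0}^{2}=E^{(0)}\sqrt{1-\beta^{2}/E^{(0)\,2}}\ne0$, so the denominator in (\ref{eq:xiSeries0123-1}) is nonzero.

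The main step is to read (\ref{eq:systemB-1}) as the factorization $\xi_{1}\cdot K=0$, where
\[
K:=\sum_{s}\left(\tfrac{1}{2\xi_{0}^{2}}+s\beta\right)\Omega_{s},\qquad
\Omega_{s}:=\omega_{s}^{(0)}+\tfrac{1}{2\xi_{0}}+s\beta\xi_{0}.
\]
The $\alpha=0$ equation (\ref{eq:singA}) supplies the constraint $\Omega_{+}\Omega_{-}=\gamma^{(0)}\ge0$. Using (\ref{eq:xif}), a short computation for $E^{(0)}<-\beta$ shows
\[
\tfrac{1}{2\xi_{0}^{2}}+s\beta=\sqrt{s\beta-E^{(0)}}\,\bigl(\sqrt{\beta-E^{(0)}}+\sqrt{-\beta-E^{(0)}}\bigr)>0,
\]
so that $K$ is a strictly positive linear combination of $\Omega_{+},\Omega_{-}$. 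Since $\Omega_{+}\Omega_{-}\ge0$ forces these to share a sign (or one of them to vanish), the only way $K$ can vanish is if $\Omega_{+}=\Omega_{-}=0$. In every other configuration, (\ref{eq:systemB-1}) immediately forces $\xi_{1}=0$.

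The main obstacle is therefore the degenerate case $\Omega_{+}=\Omega_{-}=0$ (which automatically entails $\gamma^{(0)}=0$ and $K=0$, making (\ref{eq:systemB-1}) void). I would close it with (\ref{eq:systemB-2}): every term of the form $\Omega_{s}\cdot(\cdots)$ and the term $-2\eta_{+-}\gamma^{(0)}$ drop out term-by-term, so (\ref{eq:systemB-2}) collapses to
\[
0=\xi_{1}^{2}\prod_{s}\left(\tfrac{1}{2\xi_{0}^{2}}+s\beta\right)=\xi_{1}^{2}\bigl((2\xi_{0}^{2})^{-2}-\beta^{2}\bigr).
\]
Substituting $\tfrac{1}{2\xi_{0}^{2}}=-E^{(0)}+\sqrt{E^{(0)\,2}-\beta^{2}}$ (valid for $E^{(0)}<-\beta$) gives $(2\xi_{0}^{2})^{-2}-\beta^{2}>E^{(0)\,2}-\beta^{2}>0$, so $\xi_{1}=0$ here as well. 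Thus (\ref{eq:systemB-1}) and (\ref{eq:systemB-2}) together yield the proposition in the bound-state case; the embedded case $E^{(0)}=\beta-\omega_{+}^{(0)\,2}\in(-\beta,\beta)$ from Theorem~\ref{thm:without} proceeds analogously with the square roots interpreted via analytic continuation, the identity $\Omega_{+}\Omega_{-}=\gamma^{(0)}=0$ together with the same two-case dichotomy closing it.

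A slicker alternative, worth recording as a remark, is to exploit the unitary intertwiner $I\ot\sigma_{3}$, which conjugates $S(\alpha,\beta)$ into $S(-\alpha,\beta)$. Since this unitary acts on $g_{\pm}$ as multiplication by $\pm1$, the normalization $\mathcal{N}_{s}$ and the diagonal entries $Q_{ss}(z)$ of the Krein $Q$-function are even functions of $\alpha$; hence (\ref{eq:singularsp}) is $\alpha$-even, and the branch $E(\alpha)$ through any simple zero $E^{(0)}$ satisfies $E(\alpha)=E(-\alpha)$, giving $E^{(1)}=E^{(3)}=0$ with no computation inside (\ref{eq:systemB}) at all.
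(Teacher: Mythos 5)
Your main argument is correct and is essentially the paper's own proof: factor (\ref{eq:systemB-1}) as $\xi_{1}K=0$, use the positivity of the coefficients $\tfrac{1}{2\xi_{0}^{2}}+s\beta$ for $E^{(0)}<-\beta$ together with $\Omega_{+}\Omega_{-}=\gamma^{(0)}\geq0$ to reduce to the degenerate case $\Omega_{+}=\Omega_{-}=0$ (which is exactly condition (\ref{eq:cnd2})), and dispose of that case via the collapse of (\ref{eq:systemB-2}) to $\xi_{1}^{2}\bigl((2\xi_{0}^{2})^{-2}-\beta^{2}\bigr)=0$; the paper handles the embedded range $E^{(0)}\in(-\beta,\beta)$ by the same dichotomy, just with a short explicit complex-coefficient computation where you write ``analogously.'' Only your closing parity remark deserves a caveat: precisely in the degenerate case the relevant zero at $\alpha=0$ is double (Proposition~\ref{prop:lambda2} finds a twofold $E^{(2)}$ there), so the branches are a priori Puiseux series in $\sqrt{\alpha^{2}}=\alpha$ and evenness of equation (\ref{eq:singularsp}) in $\alpha$ does not by itself exclude $E^{(1)}\neq0$ — the computation with (\ref{eq:systemB-2}) is genuinely needed in that case, as your correctly inserted qualifier ``simple zero'' already concedes.
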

\begin{proof}
Let $E^{(0)}<-\beta$. Then (\ref{eq:systemB-1}) simplifies thus:
\[0=\xi_{1}\sum_{ s}\sqrt{  s\beta-E^{(0)} }
\left(\sqrt{  s\beta-E^{(0)} }+\sqrt{ - s\beta-E^{(0)} }\right)
\left(\omega_{ s}^{(0)}+\sqrt{  s\beta-E^{(0)} }\right).\]
Since $s\beta-E^{(0)}>0$ ($s=\pm$), it follows that
\[\sqrt{  s\beta-E^{(0)} }
\left(\sqrt{  s\beta-E^{(0)} }+\sqrt{ - s\beta-E^{(0)} }\right)>0\]
for both $s=\pm$. But then, either $\xi_{1}=0$ or $\omega_{ s}^{(0)}+\sqrt{  s\beta-E^{(0)} }=0$.
If the former, then, by (\ref{eq:xiSeries0123-1}), $E^{(1)}=0$. If the latter, then 
\begin{equation}
E^{(0)}=\beta-\omega_{+}^{(0)\,2}, \quad \gamma^{(0)}=0, \quad 
\omega_{+}^{(0)}<-\sqrt{2\beta}, \quad \omega_{-}^{(0)}=-\sqrt{\omega_{+}^{(0)\,2}-2\beta}. 
\label{eq:cnd2}
\end{equation}
Under these conditions, (\ref{eq:systemB-2}) fails
unless $\xi_{1}=0$. Therefore, $E^{(1)}=0$ in either case.

If $-\beta<E^{(0)}\leq0$, then $-\sqrt{2\beta}<\omega_{+}^{(0)}\leq-\sqrt{\beta}$. But then
(\ref{eq:systemB-1}) implies either $\xi_{1}=0$ or $\omega_{+}^{(0)}=\omega_{-}^{(0)}$ and $\beta=0$.
Since $\beta>0$, $E^{(1)}=0$ necessarily. Finally, when $0<E^{(0)}<\beta$, (\ref{eq:systemB-1})
also implies that $\xi_{1}=0$ necessarily.
\end{proof}
By (\ref{eq:qSeries0123-1}) and Proposition~\ref{prop:lambda1}, $q_{s}^{(1)}=0$, and the system (\ref{eq:systemB})
reduces to (\ref{eq:systemB-2})--(\ref{eq:systemB-3}). It follows that $\xi_{2}$ and $\xi_{3}$ are uniquely defined
by the latter system unless (\ref{eq:cnd2}) holds. Otherwise we need more terms of expansion. Thus,  
$O(\alpha^{4})$ in (\ref{eq:sr}) further reads
\begin{equation}
O(\alpha^{4})=\alpha^{4}\xi(E)^{3}(\tfrac{1}{6}
-\tfrac{1}{5}s\beta\xi(E)^{2})
+\alpha^{6}\xi(E)^{5}(\tfrac{1}{10}
-\tfrac{1}{7}s\beta\xi(E)^{2})+O(\alpha^{8}).
\label{eq:sr2}
\end{equation}
By definition, the normalization constant $\mathcal{N}_{s}$ depends on even powers of $\alpha$. Then
$O(\alpha^{4})$ in (\ref{eq:omegaSeries}) is given by
\begin{equation}
O(\alpha^{4})=\alpha^{4}\omega_{ s}^{(2)}+\alpha^{6}\omega_{ s}^{(3)}+
O(\alpha^{8}).
\label{eq:omegaSeries2}
\end{equation}
Although $\omega_{ s}^{(2)}$ and $\omega_{ s}^{(3)}$
can be found similar to $\omega_{s}^{(1)}$ in (\ref{eq:gammaomega0}),
their explicit representation is inessential since our final goal is to express $E$
with the accuracy up to $O(\alpha^{4})$.
Next, if we supplement $\epsilon(\alpha)$ with terms
indexed by $j=4,5,6$, then the values of functions $\xi$ and $q_{s}$ 
in (\ref{eq:xiSeries}) and (\ref{eq:qSeries}), respectively, are also supplemented
with terms indexed by $j=4,5,6$. Put these values along with (\ref{eq:sr2}) and
(\ref{eq:omegaSeries2}) in (\ref{eq:sr}), (\ref{eq:omegaSeries}), and then in
(\ref{eq:sing}). Then collect the terms with the same powers $<7$ of $\alpha$,
and deduce a zero-valued polynomial in $\alpha>0$ of degree $6$.
Set each monomial to zero, apply Proposition~\ref{prop:lambda1}, the relation $q_{s}^{(1)}=0$, 
and condition (\ref{eq:cnd2}), and get that the zero-valued monomials of degree 
$4,5,6$ are of the form: 
\begin{subequations}\label{eq:systemB2}
\begin{align}
0=&\prod_{s}\left(q_{-s}^{(0)}-\xi_{2}\left(
\frac{1}{2\xi_{0}^{2}}+ s\beta\right)\right), \label{eq:systemB2-1} \\
0=&\xi_{3}\left(2\xi_{2}\prod_{s}\left(
\frac{1}{2\xi_{0}^{2}}+ s\beta\right)-\sum_{s}q_{ s}^{(0)}
\left(\frac{1}{2\xi_{0}^{2}}+ s\beta\right)\right), \label{eq:systemB2-2} \\
0=&30\xi_{3}^{2}\prod_{s}\left(\frac{1}{2\xi_{0}^{2}}+ s\beta\right)
-\sum_{s}\left(q_{-s}^{(0)}-\xi_{2}\left(
\frac{1}{2\xi_{0}^{2}}+ s\beta\right)\right) \nonumber \\
&\times\left(\xi_{0}^{5}\left(\frac{5}{\xi_{0}^{2}}-6 s\beta\right)+
30\left(\xi_{0}^{2}\xi_{2}+\xi_{4}\right)\left(\frac{1}{2\xi_{0}^{2}}- s\beta\right)
-\frac{15\xi_{2}^{2}}{\xi_{0}^{3}}-30\omega_{ s}^{(2)}\right).
\label{eq:systemB2-3}
\end{align}
\end{subequations}
\begin{prop}\label{prop:lambda3}
$E^{(3)}=0$.
\end{prop}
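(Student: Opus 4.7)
The strategy is to reduce the statement to $\xi_3 = 0$, from which $E^{(3)} = 0$ will follow directly. Indeed, substituting $E^{(1)} = 0$ (Proposition~\ref{prop:lambda1}) into (\ref{eq:xiSeries0123-3}), all $E^{(1)}E^{(2)}$ and $E^{(1)\,3}$ monomials drop and the remaining $E^{(3)}$-terms collect into a common factor $(\beta^{2}-E^{(0)\,2})^{2}(1+2\xi_{0}^{2}E^{(0)})$, giving $\xi_{3} = -E^{(3)}(1+2\xi_{0}^{2}E^{(0)})/(4\xi_{0}(\beta^{2}-E^{(0)\,2}))$. The prefactor is nonzero because $E^{(0)}\neq\pm\beta$ by Lemma~\ref{lem:existence} and because $1+2\xi_{0}^{2}E^{(0)}$ reduces to $w/(1+w)$ with $w := \sqrt{1-(\beta/E^{(0)})^{2}}\neq 0$. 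Hence $\xi_{3}=0$ will immediately yield $E^{(3)}=0$.

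Outside the exceptional configuration (\ref{eq:cnd2}), equation (\ref{eq:systemB-3}) already does the job. Inserting $\xi_{1}=0$ (Proposition~\ref{prop:lambda1}) and $q_{s}^{(1)}=0$ (by (\ref{eq:qSeries0123-1})), the whole first sum in (\ref{eq:systemB-3}) vanishes and what remains collapses to $-\xi_{3}K = 0$, where $K := A_{+}B_{-}+A_{-}B_{+}$ with $A_{s} := \omega_{s}^{(0)}+\tfrac{1}{2\xi_{0}}+s\beta\xi_{0}$ and $B_{s} := \tfrac{1}{2\xi_{0}^{2}}+s\beta$. To conclude $\xi_{3}=0$ it remains to verify $K\neq0$ case by case along the partition of Theorem~\ref{thm:without}. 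For $E^{(0)}<-\beta$ both $B_{\pm}$ are positive while Theorem~\ref{thm:without} gives $A_{+}A_{-}=\gamma^{(0)}\geq0$, so $A_{+}$ and $A_{-}$ share a sign (or vanish), and $K=0$ forces $A_{+}=A_{-}=0$, which is precisely (\ref{eq:cnd2}). In the embedded subcase $E^{(0)}\in(-\beta,\beta)$ one has $\gamma^{(0)}=0$ with exactly one of $A_{\pm}$ vanishing (as in the proof of Proposition~\ref{prop:lambda1}), and the nonvanishing product $A_{\mp}B_{\pm}$ yields $K\neq0$.

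Under (\ref{eq:cnd2}) the coefficient $K$ vanishes and $\xi_{3}$ is no longer determined by (\ref{eq:systemB-3}); I would then pass to the extended system (\ref{eq:systemB2}). The factorisation $(q_{-}^{(0)}-\xi_{2}B_{+})(q_{+}^{(0)}-\xi_{2}B_{-})=0$ in (\ref{eq:systemB2-1}) fixes $\xi_{2}$ along one of two branches. Substituting either branch into the parenthesis multiplying $\xi_{3}$ in (\ref{eq:systemB2-2}), i.e. $2\xi_{2}B_{+}B_{-}-\sum_{s}q_{s}^{(0)}B_{s}$, collapses it to $B_{s}(\xi_{2}B_{-s}-q_{-s}^{(0)})$, with $s$ the branch-complementary index; this is nonzero unless both branches of (\ref{eq:systemB2-1}) are simultaneously active. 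In that residual doubly degenerate corner I would fall back on (\ref{eq:systemB2-3}): its $\xi_{3}^{2}$-coefficient is $30B_{+}B_{-}\neq0$, whereas the $\xi_{3}^{0}$-part vanishes since the brackets $q_{-s}^{(0)}-\xi_{2}B_{s}$ are both zero by hypothesis, forcing $\xi_{3}=0$ one last time.

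I expect the main technical obstacle to be the embedded subcase $E^{(0)}\in(-\beta,\beta)$, where $\xi_{0}$ is genuinely complex and the real-sign arguments above must be replaced by statements about the appropriate branches of the square roots defining $\xi_{0}$ and $A_{\pm}$. A secondary nuisance is the doubly degenerate corner inside (\ref{eq:cnd2}): one has to check that the contributions of the auxiliary unknowns $\xi_{4}$ and $\omega_{s}^{(2)}$ to (\ref{eq:systemB2-3}) do not conspire to cancel the $\xi_{3}^{2}$-coefficient, so that the reduction to a genuine quadratic in $\xi_{3}$ with nonzero leading term is legitimate.
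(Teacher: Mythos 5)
Your proposal is correct and follows the same skeleton as the paper's proof: reduce $E^{(3)}=0$ to $\xi_{3}=0$ via (\ref{eq:xiSeries0123-3}) with $E^{(1)}=0$, observe that outside (\ref{eq:cnd2}) equation (\ref{eq:systemB-3}) degenerates to (\ref{eq:systemB-1}) with $\xi_{1}$ replaced by $\xi_{3}$ and rerun the argument of Proposition~\ref{prop:lambda1}, and handle the exceptional configuration (\ref{eq:cnd2}) with the extended system (\ref{eq:systemB2}). One small slip: the coefficient you call $K$ should be $\sum_{s}A_{s}B_{s}=A_{+}B_{+}+A_{-}B_{-}$ (compare (\ref{eq:systemB-1})), not $A_{+}B_{-}+A_{-}B_{+}$; since $B_{\pm}>0$ for $E^{(0)}<-\beta$ and $A_{+}A_{-}=\gamma^{(0)}\geq0$, the sign argument and the conclusion $K=0\Leftrightarrow A_{+}=A_{-}=0\Leftrightarrow(\ref{eq:cnd2})$ survive either way.

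Where you genuinely depart from the paper is the degenerate case. The paper extracts from (\ref{eq:systemB2-2}) the alternative ``$\xi_{3}=0$ or (\ref{eq:cnd3})'', then feeds (\ref{eq:cnd3}) into (\ref{eq:systemB2-1}) to solve for $\xi_{0}$ explicitly (formula (\ref{eq:ppppp}), with its positivity proviso), and finally excludes $q_{+}^{(0)}q_{-}^{(0)}=0$ by a sign argument on $\omega_{\pm}^{(0)}$. You instead read (\ref{eq:systemB2-1}) as a two-branch condition on $\xi_{2}$, note that on a single branch the $\xi_{3}$-coefficient in (\ref{eq:systemB2-2}) reduces to $B_{\pm}$ times the other (nonvanishing) factor, and reserve (\ref{eq:systemB2-3}) for the doubly degenerate corner, where both brackets $q_{-s}^{(0)}-\xi_{2}B_{s}$ vanish and the equation collapses to $30\xi_{3}^{2}B_{+}B_{-}=0$. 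This is cleaner: it bypasses the explicit $\xi_{0}$ computation and the case $q_{+}^{(0)}q_{-}^{(0)}=0$ entirely, and your ``secondary nuisance'' about $\xi_{4}$ and $\omega_{s}^{(2)}$ is in fact moot, because those unknowns enter (\ref{eq:systemB2-3}) only through the factors multiplied by the vanishing brackets. Your doubly degenerate corner is exactly the paper's case ``(\ref{eq:cnd3}) together with (\ref{eq:systemB2-1})'', so the two treatments cover the same set. The one place where your write-up is no more rigorous than the paper's is the embedded subcase $E^{(0)}\in(-\beta,\beta)$, where $\xi_{0}$ is complex and the branch bookkeeping for $A_{\pm}$ and $B_{\pm}$ must be done by hand; you correctly flag this, and the paper's own proof (``redo all the steps'') is equally terse there.
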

\begin{proof}
By Proposition~\ref{prop:lambda1} and (\ref{eq:xiSeries0123-3}),
\[\xi_{3}=\frac{E^{(3)}\left(2\xi_{0}^{2}E^{(0)}+1\right)}{4\xi_{0}
\left(E^{(0)\,2}-\beta^{2}\right)}.\]
Therefore, $\xi_{3}=0\Leftrightarrow E^{(3)}=0$. Since $q_{s}^{(1)}=0$ and $\xi_{1}=0$,
we see that (\ref{eq:systemB-3}) coincides with (\ref{eq:systemB-1}) but with $\xi_{1}$ replaced by $\xi_{3}$.
If (\ref{eq:cnd2}) does not hold, then we can redo all the steps made up
during the proof of Proposition~\ref{prop:lambda1} to conclude that $\xi_{3}=0$.
Hence $E^{(3)}=0$, as claimed. If (\ref{eq:cnd2}) holds, then by (\ref{eq:systemB2-2}), either
$\xi_{3}=0$ or 
\begin{equation}
\xi_{2}=\tfrac{1}{2}\sum_{ s}q_{ s}^{(0)}
\left(\frac{1}{2\xi_{0}^{2}}+ s\beta\right)/\prod_{ s}\left(
\frac{1}{2\xi_{0}^{2}}+ s\beta\right).
\label{eq:cnd3}
\end{equation}
Substitute (\ref{eq:cnd3}) in (\ref{eq:systemB2-1}) and get that $\xi_{0}>0$
takes the value
\begin{equation}
\xi_{0}=\sqrt{\frac{1}{2\beta}\frac{ q_{-}^{(0)}-q_{+}^{(0)} }{ q_{-}^{(0)}+q_{+}^{(0)} }}\quad
\text{for}\quad \frac{ q_{-}^{(0)}-q_{+}^{(0)} }{ q_{-}^{(0)}+q_{+}^{(0)} }>0.
\label{eq:ppppp}
\end{equation}
Put this value in (\ref{eq:cnd3}) and get that $\xi_{2}=(q_{-}^{(0)}-q_{+}^{(0)})/(2\beta)$. Now put this $\xi_{2}$ and 
(\ref{eq:ppppp}) in (\ref{eq:systemB2-3}) and deduce that either $\xi_{3}=0$ or $q_{+}^{(0)}q_{-}^{(0)}=0$. If
the latter, then $q_{+}^{(0)}=0$, since the inequality in (\ref{eq:ppppp}) fails for $q_{-}^{(0)}=0$. If
$q_{+}^{(0)}=0$, then $\omega_{-}^{(0)}(\omega_{+}^{(0)}-\omega_{-}^{(0)})=0$. But $\omega_{-}^{(0)}<0$
and therefore $\omega_{+}^{(0)}=\omega_{-}^{(0)}$, which is true iff $\beta=0$, hence false.
\end{proof}
\begin{prop}\label{prop:lambda2}
For $E^{(0)}<-\beta$,
\begin{align*}
E^{(2)}=&
\frac{2\sqrt{2}}{\beta}(\sqrt{ E^{(0)\,2}-\beta^{2} }-E^{(0)})
\sqrt{ E^{(0)\,2}-\beta^{2} }\sqrt{-E^{(0)}-\sqrt{ E^{(0)\,2}-\beta^{2} }}
\nonumber \\
&\times
\frac{ \sum_{ s}q_{- s}^{(0)}(\omega_{ s}^{(0)}+
\sqrt{ s\beta-E^{(0)}})-2\eta_{+-}\gamma^{(0)} }{ 
\sum_{ s}\sqrt{ s\beta-E^{(0)}}(\sqrt{ s\beta-E^{(0)}}+
\sqrt{- s\beta-E^{(0)}})(\omega_{ s}^{(0)}+
\sqrt{ s\beta-E^{(0)}}) } \\
&\qquad\qquad\qquad(\gamma^{(0)}\neq0); \\
=&-2\omega_{+}^{(0)}q_{+}^{(0)}\qquad(\gamma^{(0)}=0,\,\omega_{+}^{(0)}<-\sqrt{2\beta},
\,\omega_{-}^{(0)}\neq-\sqrt{\omega_{+}^{(0)\,2}-2\beta}); \\
=&-2\omega_{-}^{(0)}q_{-}^{(0)}\qquad(\gamma^{(0)}=0,\,\omega_{-}^{(0)}<0,\,
\omega_{+}^{(0)}\neq-\sqrt{\omega_{-}^{(0)\,2}+2\beta}).
\end{align*}
Further, if $\gamma^{(0)}=0$, $\omega_{+}^{(0)}<-\sqrt{2\beta}$, and
$\omega_{-}^{(0)}=-\sqrt{\omega_{+}^{(0)\,2}-2\beta}$, then $E^{(2)}$
is twofold, and it takes the values $-2\omega_{ s}^{(0)}q_{ s}^{(0)}$ $( s=\pm)$.
\end{prop}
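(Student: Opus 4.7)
The plan is to exploit Proposition~\ref{prop:lambda1}: inserting $E^{(1)}=0$, equivalently $\xi_{1}=0$ and $q_{s}^{(1)}=0$, into (\ref{eq:systemB-2}) reduces that identity to an affine equation in $\xi_{2}$ alone. Once $\xi_{2}$ is pinned down, the relation
\[E^{(2)}=\frac{2\sqrt{E^{(0)\,2}-\beta^{2}}}{\xi_{0}}\,\xi_{2},\]
which is what (\ref{eq:xiSeries0123-2}) collapses to after $E^{(1)}=0$, converts the result into $E^{(2)}$.

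I would first record two elementary identities that hold for $E^{(0)}<-\beta$, namely $\frac{1}{2\xi_{0}}+s\beta\xi_{0}=\sqrt{s\beta-E^{(0)}}$ and $\frac{1}{2\xi_{0}^{2}}+s\beta=\sqrt{s\beta-E^{(0)}}(\sqrt{s\beta-E^{(0)}}+\sqrt{-s\beta-E^{(0)}})$; both follow by squaring and applying (\ref{eq:xif}), together with $\sqrt{E^{(0)\,2}-\beta^{2}}=\sqrt{s\beta-E^{(0)}}\sqrt{-s\beta-E^{(0)}}$. With these substitutions, (\ref{eq:systemB-2}) rewrites as $2\eta_{+-}\gamma^{(0)}=\sum_{s}q_{-s}^{(0)}(\omega_{s}^{(0)}+\sqrt{s\beta-E^{(0)}})-\xi_{2}\mathcal{D}$, where $\mathcal{D}$ is precisely the denominator displayed in the generic-case formula of the proposition.

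For the generic case $\gamma^{(0)}\neq0$, Theorem~\ref{thm:without} ensures that no factor $\omega_{s}^{(0)}+\sqrt{s\beta-E^{(0)}}$ vanishes and so $\mathcal{D}\neq0$; solving for $\xi_{2}$, substituting the closed form $\xi_{0}^{2}=(-E^{(0)}-\sqrt{E^{(0)\,2}-\beta^{2}})/(2\beta^{2})$, and simplifying the square roots yields the first displayed expression for $E^{(2)}$. In the subcase $\gamma^{(0)}=0$ with exactly the $s=+$ factor vanishing (so $E^{(0)}=\beta-\omega_{+}^{(0)\,2}$, $\omega_{+}^{(0)}<-\sqrt{2\beta}$, and $\omega_{-}^{(0)}\neq-\sqrt{\omega_{+}^{(0)\,2}-2\beta}$), only the $s=-$ summand of the reduced equation survives and gives $\xi_{2}=q_{+}^{(0)}/(\frac{1}{2\xi_{0}^{2}}-\beta)$; evaluating the denominator at this $E^{(0)}$ as $\sqrt{\omega_{+}^{(0)\,2}-2\beta}(\sqrt{\omega_{+}^{(0)\,2}-2\beta}-\omega_{+}^{(0)})$ and performing the parallel simplification of the prefactor collapses everything to $E^{(2)}=-2\omega_{+}^{(0)}q_{+}^{(0)}$; the $s=-$ subcase is symmetric.

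For the twofold situation of (\ref{eq:cnd2}), both factors $\omega_{s}^{(0)}+\sqrt{s\beta-E^{(0)}}$ vanish, so (\ref{eq:systemB-2}) is satisfied automatically and does not fix $\xi_{2}$. One then turns to (\ref{eq:systemB2-1}), which reads $0=\prod_{s}(q_{-s}^{(0)}-\xi_{2}(\frac{1}{2\xi_{0}^{2}}+s\beta))$, offering two candidate values of $\xi_{2}$, one for each vanishing factor; repeating the conversion of the previous step produces the twofold assignment $E^{(2)}=-2\omega_{s}^{(0)}q_{s}^{(0)}$ ($s=\pm$). The main obstacle is purely computational bookkeeping: the prefactor $2\sqrt{E^{(0)\,2}-\beta^{2}}/\xi_{0}$ must combine exactly with the reciprocal of $\frac{1}{2\xi_{0}^{2}}+s\beta$ to cancel down to $-2\omega_{s}^{(0)}$ in each degenerate branch, and verifying these cancellations cleanly — without losing the sign information carried by $\omega_{s}^{(0)}<0$ — is the only delicate part of the argument.
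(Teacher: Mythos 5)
Your proposal is correct and takes essentially the same route as the paper: both use Proposition~\ref{prop:lambda1} to reduce (\ref{eq:xiSeries0123-2}) to a linear relation between $E^{(2)}$ and $\xi_{2}$, then extract $\xi_{2}$ from (\ref{eq:systemB-2}) in the generic and single-degenerate cases and from (\ref{eq:systemB2-1}) in the twofold case (\ref{eq:cnd2}). The paper's own proof is a two-line pointer to those same equations; you simply supply the algebraic bookkeeping — the identities $\tfrac{1}{2\xi_{0}}+s\beta\xi_{0}=\sqrt{s\beta-E^{(0)}}$, $\tfrac{1}{2\xi_{0}^{2}}+s\beta=\sqrt{s\beta-E^{(0)}}(\sqrt{s\beta-E^{(0)}}+\sqrt{-s\beta-E^{(0)}})$, and the cancellation $\xi_{0}(\sqrt{\beta-E^{(0)}}+\sqrt{-\beta-E^{(0)}})=1$ — that the paper leaves implicit, and all of these check out.
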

\begin{rem}
It can be shown that, for $\Gamma$ diagonal and $E^{(0)}<-\beta$, the following holds:
$q_{-}^{(0)}<0$ for $E^{(0)}=-\beta-\omega_{-}^{(0)\,2}$, $\omega_{-}^{(0)}<0$; 
$q_{+}^{(0)}<0$ for $E^{(0)}=\beta-\omega_{+}^{(0)\,2}$, $\omega_{+}^{(0)}<-\sqrt{2\beta}$. 
By Proposition~\ref{prop:lambda2}, this means the spin-orbit term moves the eigenvalues down from the threshold 
$-\beta$.
\end{rem}
\begin{proof}[Proof of Proposition~\ref{prop:lambda2}]
Using Proposition~\ref{prop:lambda1}, one finds from (\ref{eq:xiSeries0123-2}) that
\begin{equation}
\xi_{2}=\frac{E^{(2)}\xi_{0}^{3}}{2\xi_{0}^{2}E^{(0)}+1}.
\label{eq:aaaaaa-1}
\end{equation}
When (\ref{eq:cnd2}) does not hold, $E^{(2)}$ is found from (\ref{eq:systemB-2}) and (\ref{eq:aaaaaa-1}). 
If (\ref{eq:cnd2}) holds, then $E^{(2)}$ follows from (\ref{eq:systemB2-1}) and (\ref{eq:aaaaaa-1}). 
\end{proof}
\begin{thm}\label{thm:existence}
Let $0<\alpha<\sqrt{2\beta}$ be arbitrarily small and $\beta<\infty$.
Then the part of the singular spectrum of $H(\Gamma,\alpha)$ that is below the threshold $-\beta$ consists of the points
$E^{(0)}+\alpha^{2}E^{(2)}+O(\alpha^{4})$ where $E^{(0)}<-\beta$ belongs to the singular 
spectrum of $H(\Gamma,0)$. Let $E\geq-\beta$ be in the singular spectrum of $H(\Gamma,\alpha)$. Then
$E=-\beta$ and $\Gamma=\Gamma^{\circ}$ necessarily; otherwise $E$ does not belong to the
singular spectrum.
\end{thm}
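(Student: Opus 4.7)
The plan is to split into the regions $E<-\beta$ and $E\geq-\beta$, each handled by assembling a different subset of the preparatory results.

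For $E<-\beta$ the argument is essentially bookkeeping of what is already in Propositions~\ref{prop:lambda1}--\ref{prop:lambda3}. Any such $E$ in $\sing H(\Gamma,\alpha)$ is written as $E=E^{(0)}+\epsilon(\alpha)$ with $E^{(0)}<-\beta$ in $\sing H(\Gamma,0)$ (Theorem~\ref{thm:without}) and $\epsilon(0)=0$, and the ansatz $\epsilon(\alpha)=\alpha E^{(1)}+\alpha^{2}E^{(2)}+\alpha^{3}E^{(3)}+O(\alpha^{4})$ is substituted into (\ref{eq:singB}). Propositions~\ref{prop:lambda1} and~\ref{prop:lambda3} force $E^{(1)}=E^{(3)}=0$, while Proposition~\ref{prop:lambda2} determines $E^{(2)}$ explicitly in terms of $E^{(0)}$, $\omega_{\pm}^{(0)}$, $\gamma^{(0)}$, and $\eta_{+-}$. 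Conversely, the $E$ so constructed annihilates the monomials of (\ref{eq:singB}) up to $O(\alpha^{4})$, so it belongs to $\sing H(\Gamma,\alpha)$ to the stated accuracy, establishing the first half of the theorem.

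For $E\geq-\beta$, by continuity of $\det(\Gamma-Q(E))$ jointly in $(\alpha,E)$, any $E$ in $\sing H(\Gamma,\alpha)$ for sufficiently small $\alpha>0$ must lie close to some $E^{(0)}\geq-\beta$ in $\sing H(\Gamma,0)$. Theorem~\ref{thm:without} enumerates three possibilities for such an $E^{(0)}$: the point $-\beta$, the interior point $\beta-\omega_{+}^{(0)\,2}$ with $\gamma^{(0)}=0$ and $-\sqrt{2\beta}<\omega_{+}^{(0)}<0$, and the point $\beta$. Lemma~\ref{lem:existence} directly handles the neighborhoods of $-\beta$ and of $\beta$, leaving only the conclusion $E=-\beta$ and $\Gamma=\Gamma^{\circ}$ from those two candidates.

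The remaining task, and the main obstacle, is to exclude perturbations of the interior candidate, that is, $E=\beta-\omega_{+}^{(0)\,2}+\epsilon(\alpha)$ with $\omega_{+}^{(0)}\in(-\sqrt{2\beta},0)$. I would run the same ansatz-and-expansion scheme as in Propositions~\ref{prop:lambda1}--\ref{prop:lambda3}: substitute $E=E^{(0)}+\alpha E^{(1)}+\alpha^{2}E^{(2)}+\cdots$ into (\ref{eq:singB}) and force each monomial in $\alpha$ to vanish. The crucial distinction from the below-threshold analysis is that $\abs{E^{(0)}}<\beta$ makes $\xi(E^{(0)})$ from (\ref{eq:xif}) genuinely complex, so (\ref{eq:sing}) evaluated at real $E$ near $E^{(0)}$ splits into two real equations. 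The leading pair already pins down $\gamma^{(0)}=0$ and $\omega_{+}^{(0)}$, so the higher-order monomials become overdetermined in the single real unknown $E^{(2)}$, producing a contradiction analogous to (\ref{eq:cnd0}). The hard part is verifying the required inequality uniformly for $\omega_{+}^{(0)}$ ranging over the open interval $(-\sqrt{2\beta},0)$ rather than at a single value, by showing that a specific explicit function of $\beta$ and $\omega_{+}^{(0)}$ built from (\ref{eq:Nseries01}) and (\ref{eq:LambdaSeries01}) has definite sign on its admissible range; once that is done, combining with Lemma~\ref{lem:existence} closes the case $E\geq-\beta$ and completes the proof.
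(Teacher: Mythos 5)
Your overall strategy coincides with the paper's: Propositions~\ref{prop:lambda1}--\ref{prop:lambda2} for the region below the threshold, Lemma~\ref{lem:existence} for the neighborhoods of $\pm\beta$, and an expansion of (\ref{eq:singB}) with complex $\xi_{0}$ for the interior candidate $E^{(0)}=\beta-\omega_{+}^{(0)\,2}$. Two remarks are in order.

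First, there is a genuine gap in your treatment of the first half. The theorem asserts that the part of $\sing H(\Gamma,\alpha)$ \emph{below} $-\beta$ consists of the points $E^{(0)}+\alpha^{2}E^{(2)}+O(\alpha^{4})$ with $E^{(0)}<-\beta$; showing that such a point lies in the singular spectrum is not enough --- you must also show it stays below $-\beta$. This is not automatic, because $E^{(2)}$ blows up relative to the distance $\delta:=-\beta-E^{(0)}$ as $E^{(0)}\uparrow-\beta$: the paper notes that $E^{(2)}=c\sqrt{\delta}+O(\delta)$ (visible from the factor $\sqrt{-E^{(0)}-\sqrt{E^{(0)\,2}-\beta^{2}}}$ in Proposition~\ref{prop:lambda2}), so that $E<-\beta$ holds precisely when $\delta>O(\alpha^{4})$, which is exactly the stated accuracy. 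Your proposal omits this step entirely, and Lemma~\ref{lem:existence} alone does not close it, since its expansion treats $\epsilon$ as independent of $\alpha$ and cannot resolve the borderline $\epsilon=O(\alpha^{4})$.

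Second, for the interior candidate your anticipated ``hard part'' --- a sign estimate uniform over $\omega_{+}^{(0)}\in(-\sqrt{2\beta},0)$ --- does not arise. Taking real and imaginary parts of (\ref{eq:empty}) yields two closed-form expressions (\ref{eq:l2}) for the single real unknown $E^{(2)}$, and these agree if and only if $\omega_{+}^{(0)\,2}=2\beta$, which is excluded by hypothesis; the contradiction is purely algebraic on the whole open interval. Only the endpoint $E^{(0)}=0$ requires an inequality of the type (\ref{eq:cnd0}), and there it is a function of $\beta$ alone. So the second half of your plan is sound and, once executed, simpler than you expect.
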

\begin{proof}
By Propositions~\ref{prop:lambda1}, \ref{prop:lambda3}, \ref{prop:lambda2}, the singular spectrum of 
$H(\Gamma,\alpha)$ consists of the points $E=E^{(0)}+\alpha^{2}E^{(2)}$ up to $O(\alpha^{4})$. 
To prove the first part of the theorem, we need only show $E^{(0)}<-\beta\Rightarrow E<-\beta$. 
Let $E^{(0)}=-\beta-\delta$,
$\delta>0$ small. Then $E=-\beta+\epsilon$ where $\epsilon:=\alpha^{2}E^{(2)}-\delta$.
By Proposition~\ref{prop:lambda2}, $E^{(2)}=c\sqrt{\delta}+O(\delta)$ for some real $c$ whose explicit 
representation is inessential here. But then $\epsilon<0\Leftrightarrow\delta>O(\alpha^{4})$, as claimed.
To prove the second part, we need to show $E^{(0)}\in(-\beta,\beta)\Rightarrow E\notin \sing 
H(\Gamma,\alpha)$, as it follows from Lemma~\ref{lem:existence}. By Theorem~\ref{thm:without} and 
(\ref{eq:systemB-2}),
\begin{equation}
q_{+}^{(0)}=\xi_{2}\left(\frac{1}{2\xi_{0}^{2}}-\beta\right)
\label{eq:empty}
\end{equation}
and we show that equation~(\ref{eq:empty}) does not have solutions with respect to $\xi_{2}\in \R$.
Let $-\beta<E^{(0)}<0$. The real and imaginary parts of (\ref{eq:empty}) imply that
\begin{subequations}\label{eq:l2}
\begin{align}
E^{(2)}=&
-\frac{\omega_{+}^{(0)}}{3\beta^{2}}
(6\beta^{2}\omega_{+}^{(1)}+\omega_{+}^{(0)}
(5\beta-2\omega_{+}^{(0)\,2})) \label{eq:l2-a} \\
=&\frac{1}{3\beta^{2}}
(2\beta^{2}(2-3\omega_{+}^{(0)}\omega_{+}^{(1)})
-\omega_{+}^{(0)}(7\beta-2\omega_{+}^{(0)\,2})).
\label{eq:l2-b}
\end{align}
\end{subequations}
It follows that (\ref{eq:l2-a}) $=$ (\ref{eq:l2-b}) iff $2\beta=\omega_{+}^{(0)\,2}$,
which contradicts the initial hypothesis $-\sqrt{2\beta}<\omega_{+}^{(0)}<0$ (Theorem~\ref{thm:without}).
If $E^{(0)}=0$, then $E^{(2)}=-\frac{1}{3}$ and $\omega_{+}^{(1)}=\frac{1}{3}\beta^{-1/2}$.
Put this $\omega_{+}^{(1)}$ along with $\omega_{+}^{(0)}=-\sqrt{\beta}$ in (\ref{eq:sssss2}) and get 
(\ref{eq:cnd0}) with $\sqrt{2\beta}$ replaced by $\sqrt{\beta}$. In this case the left-hand side approaches 
$-\frac{5}{24}\beta^{-3/2}$ as $\beta\uparrow\infty$, hence false. When $0<E^{(0)}<\beta$,
$E^{(2)}$ satisfies (\ref{eq:l2}), and we conclude that $E^{(0)}+\alpha^{2}E^{(2)}\notin 
\sing H(\Gamma,\alpha)$ for all $E^{(0)}\in(-\beta,\beta)$.
\end{proof}
\subsection{Case with large spin-orbit coupling}
Here we wish to post some most important properties of solutions $E\geq-\Sigma$ of (\ref{eq:sing}) 
under hypothesis in (\ref{item:C}). For this we find it convenient to define the functions $(0,\nu]\to\R$ as
\begin{align*}
U_{\nu}(x):=&\frac{1}{x}\left(\frac{\nu^{2}+1}{\nu^{2}}\arctan(x)-\frac{1}{x}\right), \\
V_{\nu}(x):=&\frac{\nu^{2}}{\nu^{2}+1}U_{\nu}(x)\left(2-\left(\nu^{2}-1\right)x^{2}U_{\nu}(x)\right).
\end{align*}
The parameter $\nu$ is defined as $\nu:=\alpha/\sqrt{2\beta}$, and it takes the values
\[1\leq\nu\leq \sqrt{\frac{1+\sqrt{1-\beta^{2}}}{\beta}} \quad 
(0<\beta\leq1).\]
\small
\begin{figure}
\centering
\includegraphics[width=1.00\textwidth]{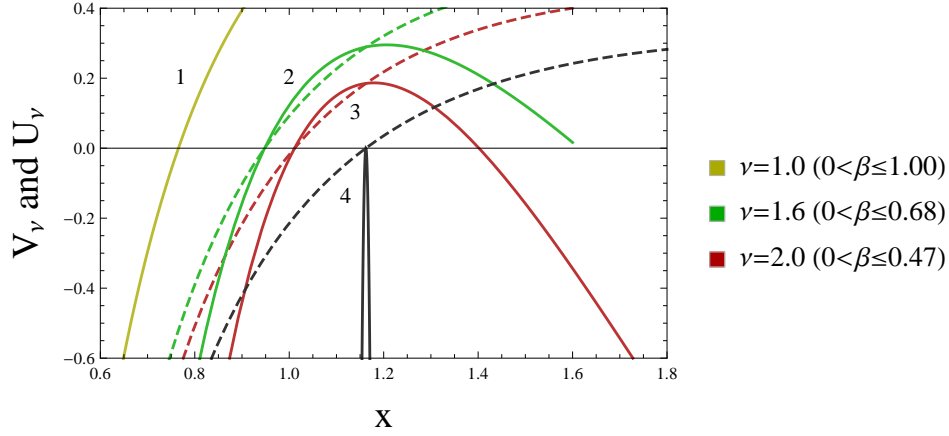}
\caption{The plots of functions $V_{\nu}$ (solid curves) and $U_{\nu}$ (dashed curves)
at $0.6\leq x\leq1.8$. The curve labeled by 1 shows the values of $V_{1.0}=U_{1.0}$;
the (solid and dashed) curves labeled by 2 show the values of $V_{1.6}$ and $U_{1.6}$;
the (solid and dashed) curves labeled by 3 show the values of $V_{2.0}$ and $U_{2.0}$;
the (solid and dashed) curves labeled by 4 show the values of $V_{\infty}$ and $U_{\infty}$,
\ie $V_{\nu}$ and $U_{\nu}$ as $\nu\uparrow\infty$. The function $V_{\infty}$ takes the value $-\infty$
everywhere except at $x_{\infty,1}\approx1.16234$, which is also a zero of $U_{\infty}$.
For $\nu<\infty$, a zero of $U_{\nu}$ coincides with the first zero, $x_{\nu,1}$, of $V_{\nu}$, and
$V_{\nu}$ also has the second zero, $x_{\nu,2}$, if $x_{\nu,2}\leq\nu$. In figure,
$x_{1.6,2}\approx1.61471>1.6$ and $x_{2.0,2}\approx1.4018<2.0$.}
\label{fig:fig}
\end{figure}
\normalsize
The parameter $\nu$ is predetermined by the relations $\alpha\geq\sqrt{2\beta}$ and $\Sigma\leq1$, and by the 
definition of $\Sigma$. Some typical plots of the functions $V_{\nu}$ and $U_{\nu}$ are illustrated in Fig.~\ref{fig:fig}. 
In particular, if $\beta>0$ is arbitrarily small, then $\nu$ varies from $1$ to $\sqrt{2/\beta}$
depending on $\sqrt{2\beta}\leq\alpha\leq2$. Therefore, the curves in Fig.~\ref{fig:fig} labeled by 4 
represent the case when the magnetic field $\beta$ approaches zero from above and the
spin-orbit-coupling strength $\alpha$ is large enough.

The function $V_{\nu}$ has at most two zeros, which we denote by
$x_{\nu,1}$ and $x_{\nu,2}$. We assume that $x_{\nu,1}>0$ is the zero of
$U_{\nu}$. Then the second zero $x_{\nu,2}>0$ solves the equation
$U_{\nu}(x)=2/((\nu^{2}-1)x^{2})$ ($\nu>1$)
provided that $x_{\nu,2}\leq\nu$. The relation $x_{\nu,2}>\nu$ is meaningless
because the domain of $V_{\nu}$ is $(0,\nu]$, by definition. If $\nu=1$, then 
the function $V_{1}=U_{1}$ has only one zero $x_{1,1}\approx0.76538$.
If $\nu\uparrow\infty$, we write $V_{\infty}:=\lim V_{\nu}$, and $-V_{\infty}$
formally behaves like an ill-defined Dirac delta function concentrated at
a zero $x_{\infty,1}\approx1.16234$ of $U_{\infty}$. In this latter case,
the two zeros $x_{\infty,1}$ and $x_{\infty,2}$ coincide. In all other cases,
\ie for $1<\nu<\infty$, the function $V_{\nu}$ has either two zeros,
if $x_{\nu,2}\leq\nu$, or a single zero, if otherwise. In general,
$x_{\nu,1}\leq x_{\nu,2}$, and the equality holds only if $\nu=\infty$.

Yet another quantity which we use to describe our main results is a function
$E_{\nu}\co (0,\nu]\to [\beta,\infty)$ defined as
\[E_{\nu}(x):=\frac{\nu^{4}+x^{4}}{2\left(\nu x\right)^{2}}\beta\quad\text{and}\quad
E_{\nu,n}:=E_{\nu}(x_{\nu,n})\quad(n=1,2).\]
The relations $x_{\nu,1}\leq x_{\nu,2}\leq\nu$ imply that $\beta\leq E_{\nu,2}\leq E_{\nu,1}$.
In particular, we have that $E_{1,1}\approx1.14643\beta$. 
In the limit $\beta\downarrow0$, the function $E_{\nu}(x)$
ranges from $0$ to $x^{-2}$ when $\nu$ ranges from
$1$ to $\sqrt{2/\beta}$. Choosing $\nu=\sqrt{2/\beta}\uparrow\infty$
(\ie $\alpha=2$, $\beta\downarrow0$) one deduces $E_{\infty,1}=E_{\infty,2}\approx0.74018$.
\begin{thm}\label{thm:existence3}
Let $\sqrt{2\beta}\leq\alpha\leq\sqrt{2(1+\sqrt{1-\beta^{2}})}$, $0<\beta\leq1$.
The singular spectrum of $H(\Gamma)$, which is embedded in the continuous spectrum $[-\Sigma,\infty)$, is given by
the set
\begin{align*}
\bigl\{E_{\nu}(x)\in[\beta,E_{\nu,1}]\co & x\in[x_{\nu,1},\nu],\,
\gamma=\omega_{+}\omega_{-}+(\beta/2)V_{\nu}(x), \\
&2\omega_{-}=x^{2}U_{\nu}(x)\sum_{s}\omega_{ s}
\left(\nu^{2}+ s\right)\bigr\}.
\end{align*}
\end{thm}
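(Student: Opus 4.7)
The plan is to reduce equation \eqref{eq:sing} to a pair of real equations by parametrising $E\geq\beta$ through a real auxiliary variable $x$. From \eqref{eq:xif} one checks that for $E\geq\beta$ the function $\xi(E)$ is purely imaginary, so I write $\xi(E)=ix/\alpha$ with $x>0$. Squaring and solving for $E$ gives
\[
E=\frac{\beta(\nu^{4}+x^{4})}{2\nu^{2}x^{2}}=E_{\nu}(x),
\]
and a short calculation shows that $x\mapsto E_{\nu}(x)$ is a strictly decreasing bijection of $(0,\nu]$ onto $[\beta,\infty)$, with $E_{\nu}(\nu)=\beta$. Embedded eigenvalues in $[\beta,\infty)$ are therefore in one-to-one correspondence with roots $x\in(0,\nu]$ of the transformed equation.

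Substituting $\alpha\xi(E)=ix$, using $\mrm{artanh}(ix)=i\arctan(x)$, and choosing the lower sign in \eqref{eq:sing} (valid for $E\geq\beta$), each factor becomes $\omega_{s}+iA_{s}(x)$ with the real quantity
\[
A_{s}(x):=\frac{\alpha}{2\nu^{2}}\left(\frac{\nu^{2}}{x}-(\nu^{2}-s)\arctan x\right),
\]
after invoking $\alpha^{2}=2\beta\nu^{2}$. Since $\gamma\in\R$, equating real and imaginary parts of $\prod_{s}(\omega_{s}+iA_{s})$ to $\gamma$ and to $0$ yields the pair
\[
\gamma=\omega_{+}\omega_{-}-A_{+}A_{-},\qquad \omega_{+}A_{-}+\omega_{-}A_{+}=0.
\]
A direct expansion of $A_{+}A_{-}$ matches $-(\beta/2)V_{\nu}(x)$ with $V_{\nu}$ as defined, while the identity $\nu^{2}x^{2}U_{\nu}(x)=(\nu^{2}+1)x\arctan(x)-\nu^{2}$ rewrites the imaginary condition as $2\omega_{-}=x^{2}U_{\nu}(x)\sum_{s}\omega_{s}(\nu^{2}+s)$. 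These are the two equations of the theorem.

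The restriction $x\in[x_{\nu,1},\nu]$ is enforced by $\gamma\geq 0$. Rewriting the imaginary condition as
\[
(\nu^{2}+1)\,x^{2}U_{\nu}(x)\,\omega_{+}=\bigl[2-(\nu^{2}-1)x^{2}U_{\nu}(x)\bigr]\omega_{-},
\]
and noting that on $(0,x_{\nu,1})$ the function $U_{\nu}$ is strictly negative (so the coefficient of $\omega_{+}$ is negative while the coefficient of $\omega_{-}$ is at least $2$), one concludes that $\omega_{+}$ and $\omega_{-}$ have opposite signs, or both vanish; in either case $\omega_{+}\omega_{-}\leq 0$. Since $V_{\nu}(x)<0$ on the same interval, the real equation then yields $\gamma<0$, contradicting $\gamma\geq 0$. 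Hence $x\geq x_{\nu,1}$, i.e.\ $E_{\nu}(x)\leq E_{\nu,1}$.

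The main obstacle is the exclusion of embedded eigenvalues in $[-\Sigma,\beta)$. One distinguishes $E\in[-\Sigma,-\beta]$, where $\xi(E)$ is real and $\alpha\xi(E)\in[1,\nu]$ so that $\mrm{artanh}(\alpha\xi(E))$ picks up a constant non-zero imaginary part from the branch cut, from $E\in(-\beta,\beta)$, where $\xi(E)$ itself is complex. In each sub-range, equating the real and imaginary parts of \eqref{eq:sing} produces constraints on $\omega_{\pm}$ that are incompatible with $\gamma\in\R_{\geq0}$; the argument in the first sub-range mimics the sign analysis above, while the second requires handling the complex phase of $\xi$ explicitly.
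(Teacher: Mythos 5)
Your treatment of $E\geq\beta$ is correct and follows the same route as the paper: parametrize via $\xi(E)=ix/\alpha$, reduce the product in (\ref{eq:sing}) to $\prod_s(\omega_s+iA_s(x))$, equate real and imaginary parts, and verify by direct computation that $A_+A_-=-(\beta/2)V_\nu(x)$ and that $\omega_+A_-+\omega_-A_+=0$ is equivalent to $2\omega_-=x^2U_\nu(x)\sum_s\omega_s(\nu^2+s)$. Your sign argument for the bound $x\geq x_{\nu,1}$ is also correct and, if anything, slightly more uniform than the paper's, which treats $\nu=1$ and $\nu>1$ separately; your observation that $U_\nu(x)<0$ forces $\omega_+\omega_-\leq0$ while $V_\nu(x)<0$ makes $\gamma<0$ handles both at once.

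However, the exclusion of embedded eigenvalues on $[-\Sigma,\beta)$ is only outlined, and that is roughly half the theorem. You correctly identify the two sub-ranges and the mechanism (branch cut of $\mrm{artanh}$ for $-\Sigma<E\leq-\beta$, genuinely complex $\xi(E)$ for $-\beta<E<\beta$), but you do not verify the sign facts on which the contradiction rests. Concretely: (i) on $(-\Sigma,-\beta]$ one has $\mrm{artanh}(\alpha\xi(E))=r-i\pi/2$ with $r>0$, so the factor in (\ref{eq:sing}) is $a_s+ic_s$ with $c_s=\tfrac{\pi}{2}(\tfrac{\alpha}{2}-\tfrac{s\beta}{\alpha})$; one must use $\alpha^2>2\beta$ to conclude $c_s>0$ for both $s$, and only then does $\sum_s a_sc_{-s}=0$ force $a_+a_-\leq0$ and hence $\gamma=a_+a_--c_+c_-<0$. (ii) On $(-\beta,\beta)$ one has $\xi(E)=R+iT$ with $R>0$ and $T$ changing sign at $E=0$; one must compute $\re\mrm{artanh}(\alpha\xi)$ and $\img\mrm{artanh}(\alpha\xi)$ and track the resulting sign of the imaginary part of each factor (negative for $-\beta<E<0$, positive for $0\leq E<\beta$) to again derive $\gamma<0$. (iii) $E=-\Sigma$ needs a separate remark, since there $\alpha\xi(-\Sigma)=1$ and $\mrm{artanh}(\alpha\xi(-\Sigma))$ diverges. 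None of these verifications is in your proposal, and they are not truly ``the same sign analysis'' as the $E\geq\beta$ case because the positivity of the imaginary parts rests on different inequalities in each sub-range. Without them the claimed incompatibility with $\gamma\geq0$ is an assertion, not a proof.
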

For example, if $\beta$ is arbitrarily small and $\alpha$ 
large, say $\alpha=2$, then $\nu$ approaches $\infty$ from below, and Theorem~\ref{thm:existence3} tells us that 
there is an eigenvalue, $E_{\infty,1}$($\approx0.74018$), above the threshold $-1$ iff 
$\gamma=\omega_{+}\omega_{-}$. This particular eigenvalue fails to satisfy
(\ref{item:b}) and (\ref{item:c}), so the corresponding eigenfunction in Lemma~\ref{lem:eigenf}
has to be analytically continued applying the methods similar to those used in \cite{Jursenas14}.
\begin{proof}[Proof of Theorem~\ref{thm:existence3}]
Since $\mrm{artanh}(\alpha\xi(-\Sigma))=\infty$, $E=-\Sigma$ does not solve (\ref{eq:sing}).
If $-\Sigma<E\leq-\beta$, then $\alpha\xi(E)>1$, $\alpha>\sqrt{2\beta}$, and 
$\mrm{artanh}(\alpha\xi(E))$ is of the form $r-i\pi/2$, where $r>0$ is the real part.
Put this form in (\ref{eq:sing}) and deduce the system of equations
\begin{equation}
\gamma=\prod_{s}a_{s}-\prod_{s}b_{s}, \quad 0=\sum_{s}a_{s}b_{s}
\label{eq:subsystem}
\end{equation}
where 
\[a_{s}:=\omega_{s}+\frac{1}{2\xi(E)}-
r\left(\frac{\alpha}{2}-\frac{s\beta}{\alpha}\right),\quad
b_{s}:=\frac{\pi}{2}
\left(\frac{\alpha}{2}+\frac{s\beta}{\alpha}\right) \quad (s=\pm).\]
Now that $a_{s}\in\R$ and $b_{s}>0$, it follows that 
$\gamma=-(b_{+}/b_{-})(a_{+}^{2}+b_{+}^{2})<0$, in contradiction to $\gamma\geq0$.
Next, suppose that $-\beta<E<\beta$. In this case $\xi(E)=R+iT$ where the real part $R>0$ while the 
imaginary part $T<0$ for $-\beta<E<0$, and $T>0$ for $0\leq E<\beta$. If we let
\begin{align*}
r:=&\re\mrm{artanh}(\alpha\xi(E))=\tfrac{1}{4}\ln\left(
\frac{ (1+\alpha R)^{2}+(\alpha T)^{2} }{ 
(1-\alpha R)^{2}+(\alpha T)^{2} }\right) \quad (r>0), \\
t:=&\img\mrm{artanh}(\alpha\xi(E))=\tfrac{1}{2}
(\Arg(1+\alpha R+i\alpha T)-\Arg(1+\alpha R-i\alpha T))
\end{align*}
then $t<0$ for $-\beta<E<0$, and $t>0$ for $0\leq E<\beta$. Put $\xi(E)=R+iT$ and 
$\mrm{artanh}(\alpha\xi(E))=r+it$ in (\ref{eq:sing}) and deduce the system (\ref{eq:subsystem}) where now
\[a_{s}:=\omega_{s}+\frac{R}{2\abs{\xi(E)}^{2}}-
r\left(\frac{\alpha}{2}-\frac{s\beta}{\alpha}\right),\quad
b_{s}:=\frac{T}{2\abs{\xi(E)}^{2}}+
t\left(\frac{\alpha}{2}+\frac{s\beta}{\alpha}\right).\]
We see that $a_{s}\in\R$ and $b_{s}<0$ for $-\beta<E<0$, and $b_{s}>0$ for $0\leq E<\beta$.
But then (\ref{eq:subsystem}) leads to the previous $\gamma=-(b_{+}/b_{-})(a_{+}^{2}+b_{+}^{2})<0$.
Finally, if $E\geq\beta$, then $\xi=iT$, where $T(E)>0$ reads
\[T(E):=\frac{1}{\sqrt{2}\beta}
\sqrt{ E-\sqrt{ E^{2}-\beta^{2} } },\quad
0<T(E)\leq T(\beta)=1/\sqrt{2\beta}.\]
Then $\mrm{artanh}(\alpha\xi)=i\arctan(\alpha T)$,
and (\ref{eq:sing}) reduces to (\ref{eq:subsystem}) where
\[a_{s}:=\omega_{s},\quad
b_{s}:=\frac{1}{2T(E)}-\left(\frac{\alpha}{2}+
\frac{s\beta}{\alpha}\right)
\arctan(\alpha T(E))\]
and $a_{s},b_{s}\in\R$. Using the definition of $T$, we write $E=E_{\nu}(x)$ where
$x:=\nu T(E)\sqrt{2\beta}$ ($0<x\leq\nu$). When written in terms of $\nu$, $x$, $U_{\nu}$, $V_{\nu}$,
the system (\ref{eq:subsystem}) coincides with that defining the singular spectrum in the theorem. It remains to
show that $x\geq x_{\nu,1}$, \ie $E_{\nu}(x)\leq E_{\nu,1}$. Let $\nu=1$. Since $V_{1}=U_{1}$, it
follows from (\ref{eq:subsystem}) that $\gamma=U_{1}(x)((\omega_{+}x)^{2}+(\beta/2))\geq0$, hence $U_{1}(x)\geq0$. 
But then $x\in[x_{1,1},1]$ and $E_{1}(x)\in [\beta,E_{1,1}]$. Let $\nu>1$ and $x\in (0,x_{\nu,1})$.
In this case $U_{\nu}(x),V_{\nu}(x)<0$ and $(\nu^{2}-1)x^{2}U_{\nu}(x)<2$. On the other hand, the system 
(\ref{eq:subsystem}) implies that $\omega_{-}=-c\omega_{+}$, $c>0$, and thus
$\gamma=-(\omega_{+}^{2}c+(\beta/2)\abs{V_{\nu}(x)})<0$. We conclude that $x\in [x_{\nu,1},\nu]$.
\end{proof}
\section{Discussion}
In the current paper, the main results concerning the eigenvalues (or else the singular points) are summarized in 
Theorems~\ref{thm:without}, \ref{thm:existence}, \ref{thm:existence3}. Other parts of the spectrum, including
the eigenfunctions, are discussed in Sec.~\ref{sec:sp} and particularly in Lemma~\ref{lem:eigenf} and
Theorem~\ref{thm:sc}. Although we said nothing about the eigenvalues below the threshold $-\Sigma$ when 
$\alpha\geq\sqrt{2\beta}$, their existence can be seen from a general equation~(\ref{eq:sing}). For example, taking 
$\Gamma:=-C^{-1}-R$ diagonal with the entries such that $\omega_{+}=\omega_{-}\equiv\omega$, one finds that
\begin{equation}
\omega+\sqrt{-E}=\frac{\alpha}{2}\mrm{artanh}\left(\frac{\alpha}{2\sqrt{-E}}\right)
\label{eq:last}
\end{equation}
\small
\begin{figure}
\centering
\begin{subfigure}[b]{0.48\textwidth}
\includegraphics[width=\textwidth]{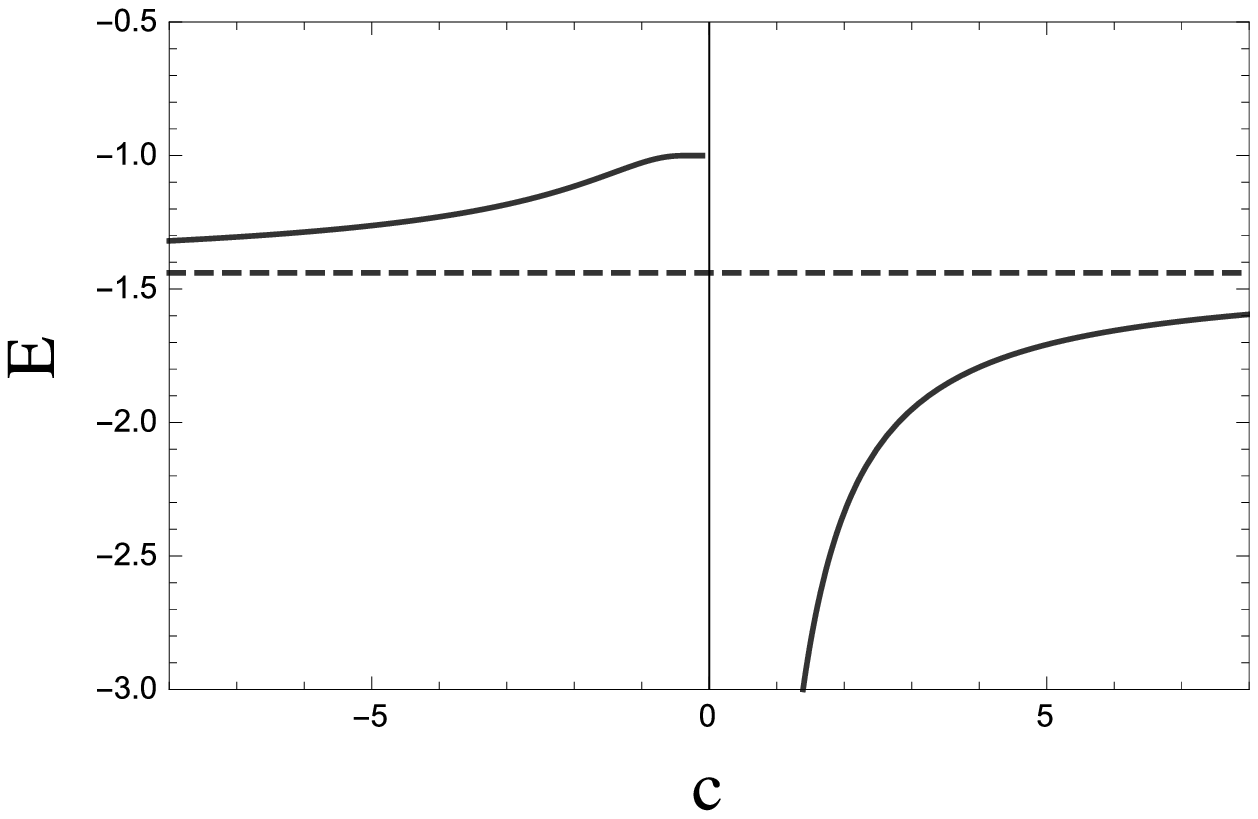}
\caption{$\beta\sim0$}
\label{fig:D}
\end{subfigure}\hfill 
\begin{subfigure}[b]{0.48\textwidth}
\includegraphics[width=\textwidth]{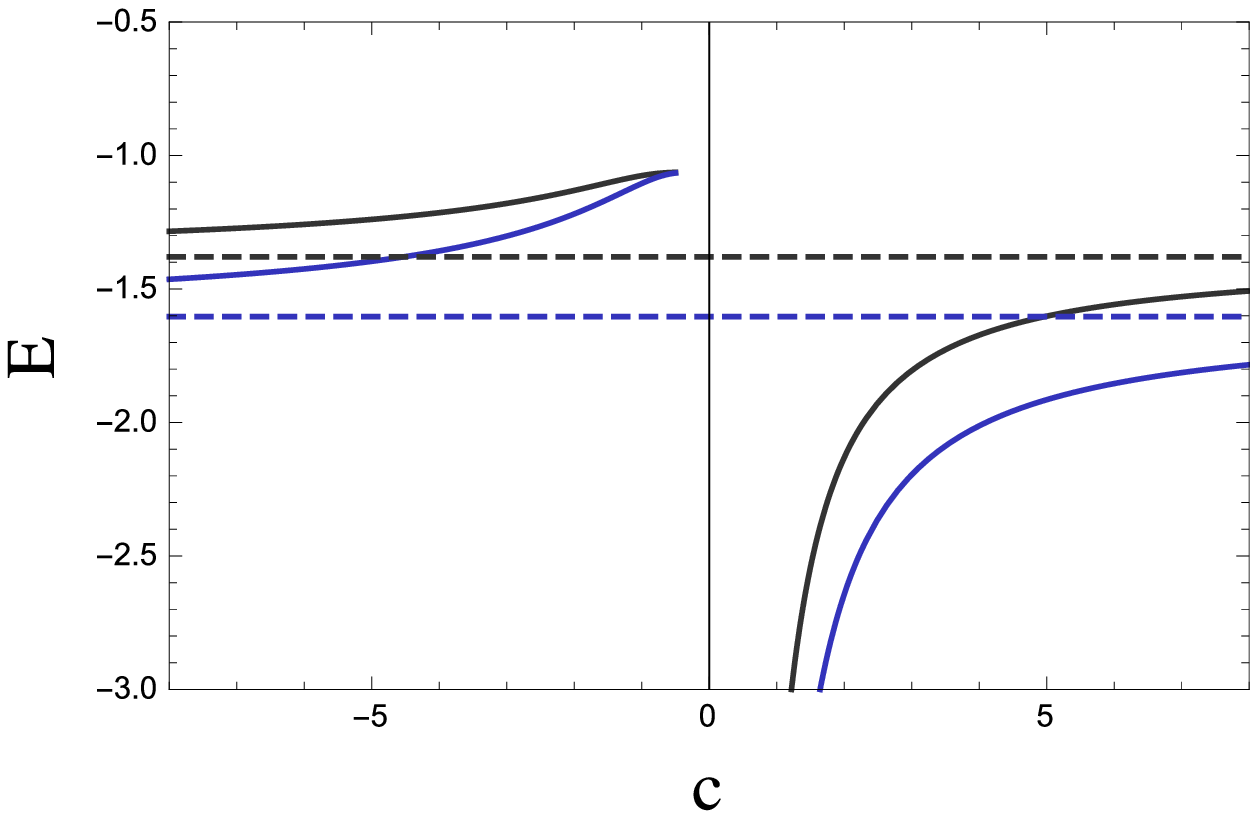}
\caption{$\beta=1/2$}
\label{fig:E}
\end{subfigure}
\caption{The eigenvalue $E$ of the perturbed Hamiltonian (\ref{eq:HV}) as a function of impurity scattering strength 
$C=c\mathbb{I}$ when the admissible matrix $R=r\mathbb{I}$ with $r\approx-0.17850$. 
The spin-orbit-coupling strength $\alpha=2$. When $c=0$, the perturbed Hamiltonian coincides with the free 
Hamiltonian whose spectrum $[-1-\beta^{2}/4,\infty)$ is absolutely continuous. When $c^{-1}=0$, the perturbed 
Hamiltonian is described by the Friedrichs extension whose spectrum is that of the free Hamiltonian.
In (a) the threshold approaches $-1$ and the dashed line shows the eigenvalue $E\approx-1.43923$ corresponding 
to the coupling parameter $c$ whose absolute value is arbitrarily large but finite. In (b) the threshold is 
$-17/16=-1.06250$, the black (upper) solid curves represent the eigenvalue for $s=+$ in (\ref{eq:sing}), the blue 
(lower) solid curves represent the eigenvalue for $s=-$, and the black dashed (upper) and blue dashed (lower) lines show
the eigenvalues $E\approx-1.37956$ (for $s=+$) and $E\approx-1.60313$ (for $s=-$) corresponding to $c$ whose 
absolute value is arbitrarily large but finite.}
\label{fig:fig2}
\end{figure}\normalsize
provided that the magnetic field $\beta$ is arbitrarily close to $0$. In this regime, the eigenvalue $E<-\Sigma$ 
exists if $\Gamma$ is of the form $v\mathbb{I}$ for some real $v$, where $\mathbb{I}$ is the identity matrix; 
this follows from the conditions $\omega_{+}=\omega_{-}$ and $\beta\sim0$ and the definition in 
(\ref{eq:gammaomega}), which in turn gives a one-to-one correspondence between $\omega$ and $v$. Note that, 
when $\alpha=0$, (\ref{eq:last}) gives the solution $E=-\omega^{2}$, $\omega<0$, which is (by no accident) in exact 
agreement with that described in Theorem~\ref{thm:without} for $\beta=0$. Also, $v\mathbb{I}\neq-R$, because 
$v\mathbb{I}=-R$ may correspond only to the Friedrichs extension (Proposition~\ref{prop:Infty2}) whose spectrum is 
only absolutely continuous. However, if one takes the coupling parameter of contact interaction of the form
$C=c\mathbb{I}$ with $c$ nonzero and real, then necessarily $R=r\mathbb{I}$ with $r$ real. It then follows that 
$v\sim -r$ for $\abs{c}$ arbitrarily large but finite. For a given $r$, there exists $\omega$ corresponding to $v\sim -r$,
so (\ref{eq:last}) shows that the perturbed Hamiltonian with arbitrarily large but finite impurity scattering 
strength $\abs{c}$ has an eigenvalue below the threshold. For example, when $\alpha=2$ and $\omega=0$, 
the parameter $-v=(\mrm{arcosh}(3)-2\sqrt{2})/(2\sqrt{2}+\pi)$ approximates $r$, and the eigenvalue 
$E\approx-1.43923$ (Fig.~\ref{fig:fig2}(a)). When $\omega=0$ and $\beta\sim0$, there also exists an eigenvalue 
$E_{\infty,1}\approx0.74018$ above the threshold (Theorem~\ref{thm:existence3}), but the same does not apply to
the case when $\beta=0$. This explains why we chose $\beta$ arbitrarily small but nonzero. However,
the above considerations remain valid for $\beta=0$ as well, with the exception that now the hypothesis of 
Theorem~\ref{thm:existence3} fails. For comparison reasons, in Fig.~\ref{fig:fig2}(b) we plot the eigenvalues 
corresponding to $s=\pm$ in (\ref{eq:sing}) as functions of $c$ when the magnetic field is nonzero. The present 
discussion is easy to extend to a general case when $\omega_{+}\neq \omega_{-}$ and $\gamma\neq0$. Moreover,
the considerations apply to the case when $\alpha<\sqrt{2\beta}$. One can show similar plots either solving
equation~(\ref{eq:sing}) numerically or applying analytic expressions in Theorem~\ref{thm:existence} for 
$\alpha$ sufficiently small. For the mean field analysis of eigenvalues at large $\abs{c}$, but in dimensions one 
and two, the reader may also refer to \cite{Kim15,Hu13,Liu13}, where the case when $\alpha<\sqrt{2\beta}$ is studied. 

We recall that in our model, when $\alpha\geq \sqrt{2\beta}$, $2$ is the maximum possible value of $\alpha$, as well as 
$1$ is the maximum possible value of $\beta$. This is due to the restriction $\Sigma\leq 1$ caused by the absolute 
convergence of the hypergeometric series which defines the Green function for the free Hamiltonian. However, an 
additional procedure of analytic continuation, similar to that when $\alpha<\sqrt{2\beta}$ is small, may probably lead 
to larger $\alpha$ as well. When applied to concrete schemes that propose the creation of spin-orbit coupling in cold 
atomic gases, the parameters $\alpha$ and $\beta$ are expressed in terms of the quantities which are measurable 
experimentally either directly or indirectly. For example, using $\hbar=1$ and $m=1/2$ ($m$ is the mass of the particle), 
the spin-orbit coupling for the Rashba Hamiltonian constructed in \cite{Anderson13} is characterized by 
$\alpha=k_{\mrm{eff}}/2$, where $k_{\mrm{eff}}$ ($\approx 1\mu$m$^{-1}$ in SI units) points to the strength of the 
magnetic field gradient. Without the additional procedure of continuation our model can therefore be applied 
straightforwardly up to $k_{\mrm{eff}}=4$. Likewise, in the experimental setup proposed in \cite{Cheuk12}, the 
spin-orbit coupling is given by $\alpha=\sqrt{E_{R}}/2$, where $E_{R}$ is the two-photon recoil energy due to 
Raman process. Hence, in the regime $\alpha\geq \sqrt{2\beta}$, one would expect $E_{R}\leq 16$.
In \cite{Liu13} one can find some realistic parameters suggested for observing bound states when 
using the scheme in \cite{Wang12} with $\alpha\approx2\sqrt{E_{R}}$ and $\beta \gtrsim 4.2E_{R}$.

While the point spectrum of the Rashba Hamiltonian with spin-dependent delta-like impurity scattering depends on
the admissible matrix $R$, the main conclusion, which is independent of $R$, is that: 
1) The Hamiltonian does not have eigenvalues above the threshold provided that the spin-orbit-coupling 
strength is small enough ($\alpha<\sqrt{2\beta}$) but nonzero.
2) The Hamiltonian does not have eigenvalues located in-between the threshold and the 
minimum of the upper branch of dispersion provided that the spin-orbit-coupling strength is large enough
($\alpha\geq \sqrt{2\beta}$).
3) The maximum possible eigenvalue that the Hamiltonian can reach is $E_{\nu,1}$, and $E_{\nu,1}\geq \beta$
depends only on $\alpha\geq \sqrt{2\beta}$ and $\beta>0$. 
\section*{Acknowledgments}
The author acknowledges the anonymous referees whose remarks have helped in improving the
presentation. The author also thanks the referee of the previous versions of the paper.
\providecommand{\bysame}{\leavevmode\hbox to3em{\hrulefill}\thinspace}
\providecommand{\MR}{\relax\ifhmode\unskip\space\fi MR }
\providecommand{\MRhref}[2]{%
  \href{http://www.ams.org/mathscinet-getitem?mr=#1}{#2}
}
\providecommand{\href}[2]{#2}


\begin{thebibliography}{10}

\bibitem{Albeverio10}
S.~Albeverio, G.~Cognola, M.~Spreafico, and S.~Zerbini, \emph{Singular
  perturbations with boundary conditions and the {C}asimir effect in the half
  space}, J. Math. Phys. \textbf{51} (2010), 063502.

\bibitem{Albeverio01}
S.~Albeverio, S.-M. Fei, and P.~Kurasov, \emph{Many {B}ody {P}roblems with
  "{S}pin-related" {C}ontact {I}nteractions}, Rep. Math. Phys. \textbf{47}
  (2001), no.~2, 157--166.

\bibitem{Albeverio05}
S.~Albeverio, F.~Gesztesy, R.~Hoegh-Krohn, and H.~Holden, \emph{Solvable
  {M}odels in {Q}uantum {M}echanics}, 2 ed., AMS Chelsea Publishing,
  Providence, Rhode Island, with an Appendix by P. Exner, 2005.

\bibitem{Albeverio02}
S.~Albeverio, V.~Koshmanenko, P.~Kurasov, and L.~Nizhnik, \emph{On
  approximations of rank one {$\mathcal{H}_{-2}$}-perturbations}, Proc. Amer.
  Math. Soc. \textbf{131} (2002), no.~5, 1443--1452.

\bibitem{Albeverio97}
S.~Albeverio and P.~Kurasov, \emph{Rank {O}ne {P}erturbations,
  {A}pproximations, and {S}elfadjoint {E}xtensions}, J. Func. Anal.
  \textbf{148} (1997), no.~FU963050, 152--169.

\bibitem{Albeverio97-1}
\bysame, \emph{Rank {O}ne {P}erturbations of {N}ot {S}emibounded {O}perators},
  Integr. Equ. Oper. Theory \textbf{27} (1997), 379--400.

\bibitem{Albeverio00}
\bysame, \emph{Singular {P}erturbations of {D}ifferential {O}perators}, London
  Mathematical Society Lecture Note Series 271, Cambridge University Press, UK,
  2000.

\bibitem{Albeverio07}
S.~Albeverio, S.~Kuzhel, and L.~Nizhnik, \emph{Singularly perturbed
  self-adjoint operators in scales of {H}ilbert spaces}, Ukrainian Math. J.
  \textbf{59} (2007), no.~6, 787--810.

\bibitem{Albeverio13}
Sergio Albeverio, Aleksey Kostenko, Mark Malamud, and Hagen Neidhardt,
  \emph{Spherical {S}chr\"{o}dinger operators with $\delta$-type interactions},
  J. Math. Phys. \textbf{54} (2013), 052103.

\bibitem{Albeverio05-1}
Sergio Albeverio and Konstantin Pankrashkin, \emph{A remark on {K}rein's
  resolvent formula and boundary conditions}, J. Phys. A: Math. Theor.
  \textbf{38} (2005), 4859--4864.

\bibitem{Ambrosetti09}
A.~Ambrosetti, F.~Pederiva, E.~Lipparini, and S.~Gandolfi, \emph{{Q}uantum
  {M}onte {C}arlo study of the two-dimensional electron gas in presence of
  {R}ashba interaction}, Phys. Rev. B \textbf{80} (2009), 125306.

\bibitem{Anderson13}
Brandon~M. Anderson, I.~B. Spielman, and Gediminas Juzeli\={u}nas,
  \emph{Magnetically {G}enerated {S}pin-{O}rbit {C}oupling for {U}ltracold
  {A}toms}, Phys. Rev. Lett. \textbf{111} (2013), 125301.

\bibitem{Braaten08}
Eric Braaten, Masaoki Kusunoki, and Dongqing Zhang, \emph{Scattering models for
  ultracold atoms}, Ann. Phys. \textbf{323} (2008), 1770--1815.

\bibitem{Bruning07}
Jochen Br\"{u}ning, Vladimir Geyler, and Konstantin Pankrashkin, \emph{Explicit
  {G}reen functions for spin-orbit {H}amiltonians}, J. Phys. A: Math. Theor.
  \textbf{40} (2007), F697--F704.

\bibitem{Bruning08}
\bysame, \emph{Spectra of self-adjoint extensions and applications to solvable
  {S}chr\"{o}dinger operators}, Rev. Math. Phys. \textbf{20} (2008), no.~01,
  1--70.

\bibitem{Cacciapuoti07}
C.~Cacciapuoti, R.~Carlone, and R.~Figari, \emph{Spin-dependent point
  potentials in one and three dimensions}, J. Phys. A: Math. Theor. \textbf{40}
  (2007), no.~2, 249--261.

\bibitem{Cacciapuoti09}
\bysame, \emph{Resonances in models of spin-dependent point interactions}, J.
  Phys. A: Math. Theor. \textbf{42} (2009), no.~3, 035202.

\bibitem{Cacciapuoti10}
Claudio Cacciapuoti, Raffaelle Carlone, and Rodolfo Figari, \emph{Perturbations
  of eigenvalues embedded at threshold: {I}. {O}ne- and three-dimensional
  solvable models}, J. Phys. A: Math. Theor. \textbf{43} (2010), 474009.

\bibitem{Campbell11}
D.~L. Campbell, G.~Juzeli\={u}nas, and I.~B. Spielman, \emph{Realistic {R}ashba
  and {D}resselhaus spin-orbit coupling for neutral atoms}, Phys. Rev. A
  \textbf{84} (2011), 025602.

\bibitem{Carlone11}
Raffaelle Carlone and Pavel Exner, \emph{Dynamics of an electron confined to a
  "hybrid plane" and interacting with a magnetic field}, Rep. Math. Phys.
  \textbf{67} (2011), no.~2, 211--227.

\bibitem{Chesi11}
Stefano Chesi and Gabriele~F. Giuliani, \emph{High-density limit of the
  two-dimensional electron liquid with {R}ashba spin-orbit coupling}, Phys.
  Rev. B \textbf{83} (2011), 235309.

\bibitem{Cheuk12}
Lawrence~W. Cheuk, Ariel~T. Sommer, Zoran Hadzibabic, Tarik Yefsah, Waseem~S.
  Bakr, and Martin~W. Zwierlein, \emph{Spin-{I}njection {S}pectroscopy of a
  {S}pin-{O}rbit {C}oupled {F}ermi {G}as}, Phys. Rev. Lett. \textbf{109}
  (2012), 095302.

\bibitem{Dalibard11}
Jean Dalibard, Fabrice Gerbier, Gediminas Juzeli\={u}nas, and Patrik
  \"{O}hberg, \emph{\textit{Colloquium}: {A}rtificial gauge potentials for
  neutral atoms}, Rev. Mod. Phys. \textbf{83} (2011), no.~4, 1523--1543.

\bibitem{Derkach91}
V.~A. Derkach and M.~M. Malamud, \emph{Generalized {R}esolvents and the
  {B}oundary {V}alue {P}roblems for {H}ermitian {O}perators with {G}aps}, J.
  Func. Anal. \textbf{95} (1991), 1--95.

\bibitem{Dijksma05}
A.~Dijksma, P.~Kurasov, and Yu. Shondin, \emph{High {O}rder {S}ingular {R}ank
  {O}ne {P}erturbations of a {P}ositive {O}perator}, Integr. Equ. Oper. Theory
  \textbf{53} (2005), 209--245.

\bibitem{Dunford88}
Nelson Dunford and Jacob~T. Schwartz, \emph{{L}inear {O}perators {III}:
  {S}pectral {O}perators}, John Wiley and Sons, Inc., New York, 1988.

\bibitem{Exner07}
P.~Exner and P.~\v{S}eba, \emph{A "{H}ybrid {P}lane" with {S}pin-{O}rbit
  {I}nteraction}, Russian J. Math. Phys. \textbf{14} (2007), no.~4, 430--434.

\bibitem{Hassi09}
Seppo Hassi and Sergii Kuzhel, \emph{On symmetries in the theory of finite rank
  singular perturbations}, J. Func. Anal. \textbf{256} (2009), 777--809.

\bibitem{Hu13}
Hui Hu, Lei Jiang, Han Pu, Yan Chen, and Xia-Ji Liu, \emph{Universal
  {I}mpurity-{I}nduced {B}ound {S}tate in {T}opological {S}uperfluids}, Phys.
  Rev. Lett. \textbf{110} (2013), 020401.

\bibitem{Jursenas14}
Rytis Jur\v{s}\.{e}nas, \emph{Series expansion for the {F}ourier transform of a
  rational function in three dimensions}, Rep. Math. Phys. \textbf{75} (2015),
  no.~1, 1--24.

\bibitem{Jursenas13}
Rytis Jur\v{s}\.{e}nas and Julius Ruseckas, \emph{Bound states of the
  spin-orbit coupled ultracold atom in a one-dimensional short-range
  potential}, J. Math. Phys. \textbf{54} (2013), 051901.

\bibitem{Kim15}
Younghyum Kim, Junhua Zhang, E.~Rossi, and Roman~M. Lutchyn,
  \emph{Impurity-{I}nduced {B}ound {S}tates in {S}uperconductors with
  {S}pin-{O}rbit {C}oupling}, Phys. Rev. Lett. \textbf{114} (2015), 236804.

\bibitem{Kurasov03}
P.~Kurasov and Yu.~V. Pavlov, \emph{On {F}ield {T}heory {M}ethods in {S}ingular
  {P}erturbation {T}heory}, Lett. Math. Phys. \textbf{64} (2003), 171--184.

\bibitem{Liao13}
Renyuan Liao, Zhi-Gao Huang, Xiu-Min Lin, and Wu-Ming Liu, \emph{Ground-state
  properties of spin-orbit-coupled {B}ose gases for arbitrary interactions},
  Phys. Rev. A \textbf{87} (2013), 043605.

\bibitem{Liu13}
Xia-Ji Liu, \emph{Impurity probe of topological superfluids in one-dimensional
  spin-orbit-coupled atomic {F}ermi gases}, Phys. Rev. A \textbf{87} (2013),
  013622.

\bibitem{Malamud12}
Mark~M. Malamud and Konrad Schm\"{u}dgen, \emph{Spectral theory of
  {S}chr\"{o}dinger operators with infinitely many point interactions and
  radial positive definite functions}, J. Func. Anal. \textbf{263} (2012),
  3144--3194.

\bibitem{Mikhailets99}
Vladimir~A. Mikhailets and Alexander~V. Sobolev, \emph{Common {E}igenvalue
  {P}roblem and {P}eriodic {S}chr\"{o}dinger {O}perators}, J. Func. Anal.
  \textbf{165} (1999), 150--172.

\bibitem{Modugno10}
Giovanni Modugno, \emph{Anderson localization in {B}ose--{E}instein
  condensates}, Rep. Prog. Phys. \textbf{73} (2010), 102401.

\bibitem{Ozawa11}
Tomoki Ozawa and Gordon Baym, \emph{Renormalization of interactions of
  ultracold atoms in simulated {R}ashba gauge fields}, Phys. Rev. A \textbf{84}
  (2011), 043622.

\bibitem{Reed80}
M.~Reed and B.~Simon, \emph{Methods of {M}odern {M}athematical {P}hysics. {I}:
  {F}unctional {A}nalysis}, vol.~1, Academic Press, Inc. (London) LTD., 1980.

\bibitem{Schmudgen12}
Konrad Schm\"{u}dgen, \emph{Unbounded {S}elf-adjoint {O}perators on {H}ilbert
  {S}pace}, Springer Dordrecht Heidelberg New York London, 2012.

\bibitem{Simon05}
Barry Simon, \emph{Trace {I}deals and {T}heir {A}pplications}, 2 ed.,
  Mathematical Surveys and Monographs, vol. 120, AMS, USA, 2005.

\bibitem{Takei12}
So~Takei, Chien-Hung Lin, Brandon~M. Anderson, and Victor Galitski,
  \emph{Low-density molecular gas of tightly bound {R}ashba-{D}resselhaus
  fermions}, Phys. Rev. A \textbf{85} (2012), 023626.

\bibitem{Vyasan11}
Jayantha~P. Vyasanakere and Vijay~B. Shenoy, \emph{Bound states of two
  spin-$\frac{1}{2}$ fermions in a synthetic non-{A}belian gauge field}, Phys.
  Rev. B \textbf{83} (2011), 094515.

\bibitem{Vyasan11a}
Jayantha~P. Vyasanakere, Shizhong Zhang, and Vijay~B. Shenoy, \emph{{BCS}-{BEC}
  crossover induced by a synthetic non-{A}belian gauge field}, Phys. Rev. B
  \textbf{84} (2011), 014512.

\bibitem{Wang12}
Pengjun Wang, Zeng-Qiang Yu, Zhengkun Fu, Jiao Miao, Lianghui Huang, Shijie
  Chai, Hui Zhai, and Jing Zhang, \emph{{S}pin-{O}rbit {C}oupled {D}egenerate
  {F}ermi {G}ases}, Phys. Rev. Lett. \textbf{109} (2012), 095301.

\end{thebibliography}
\end{document}